\newtheorem{thm}{Theorem}[section]
\newtheorem{defi}[thm]{Definition}
\newtheorem{lem}[thm]{Lemma}
\newtheorem{prop}[thm]{Proposition}
\newtheorem{exam}[thm]{Example}
\newtheorem{rem}[thm]{Remark}
\numberwithin{equation}{section}
\newcommand{\bbN}{\mathbb{N}}
\newcommand{\bbR}{\mathbb{R}}
\newcommand{\mcA}{\mathcal{A}}
\newcommand{\mcB}{\mathcal{B}}
\newcommand{\mcC}{\mathcal{C}}
\newcommand{\mcD}{\mathcal{D}}
\newcommand{\mcI}{\mathcal{I}}
\newcommand{\mcJ}{\mathcal{J}}
\newcommand{\mcK}{\mathcal{K}}
\newcommand{\mcL}{\mathcal{L}}
\newcommand{\mcN}{\mathcal{N}}
\newcommand{\mcR}{\mathcal{R}}
\newcommand{\scM}{\mathscr{M}}
\newcommand{\scT}{\mathscr{T}}
\newcommand{\bfA}{\mathbf{A}}
\newcommand{\bfT}{\mathbf{T}}
\newcommand{\bfG}{\mathbf{G}}
\newcommand{\bfa}{\mathbf{a}}
\newcommand{\bfb}{\mathbf{b}}
\newcommand{\bfc}{\mathbf{c}}
\newcommand{\bfaob}{\mathbf{a}\ominus\mathbf{b}}
\newcommand{\bfcoa}{\mathbf{c}\ominus\mathbf{a}}
\newcommand{\bfcpb}{\mathbf{c}\oplus\beta}
\newcommand{\wco}{w^*}
\newcommand{\vco}{v^*}
\newcommand{\mfs}{\mathfrak{s}}
\newcommand{\id}{\mathop{\text{\rm id}}}
\newcommand{\lp}{\llparenthesis\,}
\newcommand{\rp}{\,\rrparenthesis}
\newcommand{\lb}{\llbracket}
\newcommand{\rb}{\rrbracket}
\newcommand{\tri}{|\!|\!|}
\newcommand{\spa}{\mathop{\text{\rm span}}}
\title{A semigroup approach to the reconstruction theorem and the multilevel Schauder estimate}
\author{Masato Hoshino\footnote{
Graduate School of Engineering Science, Osaka University, 
1-3, Machikaneyama, Toyonaka, Osaka, 560-8531, Japan. 
Email: {\tt hoshino@sigmath.es.osaka-u.ac.jp}
}
}
\date{}							
\begin{document}
\maketitle

\begin{abstract}
The reconstruction theorem and the multilevel Schauder estimate have central roles in the analytic theory of regularity structures by Hairer (2014). Inspired by Otto and Weber's work (2019), we provide elementary proofs for them by using the semigroup of operators. Essentially, we use only the semigroup property and the upper estimates of kernels.
Moreover, we refine the several types of Besov reconstruction theorems considered by Hairer--Labb\'e (2017) and Broux--Lee (2022) and introduce the new framework of ``regularity-integrability structures". The analytic theorems in this paper are applied to the study of quasilinear SPDEs by Bailleul--Hoshino--Kusuoka (2022+) and an inductive proof of the convergence of random models by Bailleul--Hoshino (2023+).
\end{abstract}

\section{Introduction}

In the past decade, the theory of regularity structures \cite{Hai14} has developed as a basic tool for understanding the renormalizations of singular stochastic PDEs. This theory provides a robust framework adopted to a wide class of equations, including the KPZ equation \cite{Hai13, HQ18}, the dynamical $\Phi^4_3$ model \cite{HX18, CMW23}, the dynamical sine-Gordon model \cite{HS16, CHS18}, and so on. 
An important feature of this theory is to express the solution $f$ of the equation as a ``generalized Taylor expansion" of the form
\begin{align}\label{eq:intro1}
f(\cdot)=\sum_\tau a_\tau(x)(\Pi_x\tau)(\cdot)+O(|\cdot-x|^\gamma)
\end{align}
at each point $x$ of the domain, where $\tau$ runs over a finite number of abstract symbols often represented as rooted decorated trees, $(\Pi_x\tau)(\cdot)$ is a given tempered distribution, $a_\tau(x)$ is a coefficient, and $\gamma\in\bbR$ is an order of the error term.
The main difficulty in solving nonlinear SPDEs is how to define the nonlinear functionals of unknown distributions $f$.
To overcome this difficulty, we consider a virtual space spanned by the symbols $\tau$, where the products $\tau\sigma$ are well-defined as long as required, and lift the distribution \eqref{eq:intro1} to the abstract vector field
\begin{align}\label{eq:intro2}
F(x)=\sum_\tau a_\tau(x)\tau
\end{align}
over the domain of $x$. Then the analytic problems for solving SPDEs are split into two steps; (I) show the well-posedness of the equation at the level of vector fields \eqref{eq:intro2}, and (I\!I) after giving a meaning to $\Pi_x\tau$ for all $\tau$, glue the distributions $\Pi_xF(x)$ over all $x$ and reconstruct the global distribution $f$ satisfying \eqref{eq:intro1}. The step (I\!I) is solved by the so-called \emph{reconstruction theorem} \cite[Section 3]{Hai14}. In the step (I), the most important problem is how to lift the convolution with Green function to the operator at the level of vector fields \eqref{eq:intro2}. The \emph{multilevel Schauder estimate} \cite[Section 5]{Hai14} gives a definition and an appropriate estimate for such an operator. These two analytic theorems have central roles in the theory of regularity structures.

The reconstruction theorem and the multilevel Schauder estimate were first proved by Hairer \cite{Hai14}, but the original proofs are quite long. Nowadays, several more elementary proofs are known.
As for the reconstruction theorem, there are the method by Littlewood--Paley theory \cite{GIP15}, the heat semigroup approach \cite{OW19, BH20}, the mollification approach \cite{ST18}, and the convolution method \cite{FH20} inspired by \cite{OW19}.
Without using regularity structures, Caravenna and Zambotti \cite{CZ20} reformulated the reconstruction theorem at the level of \emph{germ}, which is a generalization of the family of distributions $\{\Pi_xF(x)\}$ as above.
Moreover, the original Besov--H\"older ($B_{\infty,\infty}$) type result is extended to the $B_{p,q}$ type Besov setting \cite{HL17, ST18, LPT21, BL22}, Triebel--Lizorkin setting \cite{HR20}, the quasinormed setting \cite{ZK23}, and Riemannian manifolds \cite{DDD19, RS21, HS23+}.
As for the multilevel Schauder estimate, there is an alternative proof by the heat semigroup approach \cite{BH20}. Also, the original estimate is extended to Besov setting \cite{HL17}, Riemannian manifolds \cite{DDD19, HS23+}, and the germ setting \cite{BCZ23}.

The main purpose of this paper is to formulate the semigroup approach as in \cite{OW19, BH20} in a more general setting and to provide short proofs of the reconstruction theorem (Theorem \ref{thm:besovreconstruction} below) and the multilevel Schauder estimate (Theorem \ref{thm:besovschauder} below).
To shorten the proof, we introduce the Besov space associated with the semigroup of operators and reformulate the theorems in more suitable senses.
In this approach, we essentially need only the semigroup property of operators and upper heat kernel estimates (see Definitions \ref{asmp1} and \ref{asmp2} below), and the proofs are elementary and very short.
Another remarkable point is that we allow \emph{inhomogeneous} operators; the integral kernel $Q_t(x,y)$ is not necessarily to be a function of $x-y$. Such an extension is required in the study of quasilinear SPDEs \cite{BHK22}.
Moreover, in the author's knowledge, the semigroup approach has not been tried for the proof of the multilevel Schauder estimate, except at \cite{BH20}. 

Another purpose of this paper is to introduce the new framework which would be more suitable for the problems involving both regularity and integrability exponents, for example, problems involving Malliavin calculus. Since Cameron--Martin space of Wiener space is typically an $L^2$-Sobolev space, if we consider two different symbols $\Xi$ and $\dot{\Xi}$ representing elements of Wiener space and Cameron--Martin space respectively, it would be convenient to give each symbol the different integrability exponents ``$\infty$" and ``$2$" respectively. To describe such a situation, we introduce the ``regularity-integrability structures" in Section \ref{section:RI} and extend the analytic theorems to this new structure.
In the author's knowledge, such an extension is not known in the literature. Indeed, in the papers \cite{HL17, ST18, LPT21}, the authors considered only $B_{\infty,\infty}$ type models and $B_{p,q}$ type modelled distributions.
On the other hand, our situation seems to be a special case of the germ setting \cite{BL22}, but in the paper \cite{BPHZ}, more detailed structure on the model space is effectively used for an inductive proof of the convergence of random models.

This paper is organized as follows.
In Section \ref{section:semigroup}, we define the Besov spaces associated with the semigroup of operators.
In Section \ref{section:RI}, we introduce the regularity-integrability structures and extend the definitions of models and modelled distributions.
The main parts of this paper are Sections \ref{section:reconstruction} and \ref{section:schauder}, which are devoted to the proofs of the reconstruction theorem and the multilevel Schauder estimate respectively.

\section{Besov spaces associated with the semigroup of operators}\label{section:semigroup}

In this section, we define the Besov norms associated with the semigroup of operators. 
For the sake of generality, we define the weighted Besov norms with arbitrary integrability exponents $p,q\in[1,\infty]$.

\subsection{Notations}\label{subsection:notation}

The symbol $\bbN$ denotes the set of all nonnegative integers.
Throughout this paper, we fix an integer $d\ge1$, the \emph{scaling} $\mfs=(\mfs_1,\dots,\mfs_d)\in[1,\infty)^d$, and a number $\ell>0$. We define $|\mfs|=\sum_{i=1}^d\mfs_i$.
For any multiindex ${\bf k}=(k_i)_{i=1}^d\in\bbN^d$, any $x=(x_i)_{i=1}^d\in\bbR^d$, and any $t>0$, we define the notations
\begin{align*}
&{\bf k}!:=\prod_{i=1}^dk_i!,\quad
|{\bf k}|_{\mfs}:=\sum_{i=1}^d\mfs_ik_i,\quad
\|x\|_\mfs:=\sum_{i=1}^d|x_i|^{1/\mfs_i},\\
&x^{\bf k}:=\prod_{i=1}^dx_i^{k_i},\quad
t^{\mfs/\ell}x:=(t^{\mfs_i/\ell}x_i)_{i=1}^d,\quad
t^{-\mfs/\ell}x:=(t^{-\mfs_i/\ell}x_i)_{i=1}^d.
\end{align*}
We define the set $\bbN[\mfs]:=\{|{\bf k}|_\mfs\,;\,{\bf k}\in\bbN^d\}$, which will be used in Section \ref{section:schauder}. The parameter $t$ is not a physical time variable, but an auxiliary variable used to define regularities of distributions.
For multiindices ${\bf k}=(k_i)_{i=1}^d$ and ${\bf l}=(l_i)_{i=1}^d$, we write ${\bf l}\le{\bf k}$ if $l_i\le k_i$ for any $1\le i\le d$, and then define $\binom{{\bf k}}{{\bf l}}:=\prod_{i=1}^d\binom{k_i}{l_i}$.

We also fix a nonnegative measurable function $G:\bbR^d\to\bbR$
and define for any $t>0$,
$$
G_t(x)=t^{-|\mfs|/\ell}G\big(t^{-\mfs/\ell}x\big).
$$

We use the notation $A\lesssim B$ for two functions $A(x)$ and $B(x)$ of a variable $x$, if there exists a constant $c>0$ independent of $x$ such that $A(x)\le cB(x)$ for any $x$.

\subsection{Weighted $L^p$ spaces}

First we introduce the class of appropriate weight functions.

\begin{defi}\label{defweight}
A continuous function $w:\bbR^d\to(0,1]$ is called a \emph{weight}.
A weight $w$ is said to be \emph{$G$-controlled} if there exists a continuous function $\wco:\bbR^d\to[1,\infty)$ such that
\begin{align}\label{weightmoderate}
w(x+y)\le \wco(x)w(y)
\end{align}
for any $x,y\in\bbR^d$ and
\begin{align}\label{weightintegrable}
\sup_{0<t\le T}\sup_{x\in\bbR^d}\Big\{\|x\|_\mfs^n\,\wco\big(t^{\mfs/\ell}x\big)G(x)\Big\}<\infty
\end{align}
for any $n\ge0$ and $T>0$. 
(In the terminology of \cite[Definition 2.3]{MW17}, $w$ is said to be \emph{$\wco$-moderate}.)
\end{defi}

\begin{defi}
For any $p\in[1,\infty]$ and any weight $w$, we define the weighted $L^p$ norm of a measurable function $f:\bbR^d\to\bbR$ by
\begin{align*}
\|f\|_{L^p(w)}:=\|fw\|_{L^p(\bbR^d)}.
\end{align*}
We denote by $L^p(w)$ the space of all measurable functions with finite $L^p(w)$ norms, and define the closed subspace $L_c^p(w)$ as the completion of the set $C(\bbR^d)\cap L^p(w)$ under the $L^p(w)$ norm.
\end{defi}

Note that $\|\cdot\|_{L^p(w)}$ is nondegenerate because $w$ is fully supported in $\bbR^d$ by definition.
An advantage of introducing $L_c^p(w)$ is that we can use density arguments. Although the space $L_c^\infty(w)$ is strictly smaller than $L^\infty(w)$, it is often sufficient to consider $L_c^\infty(w)$ in applications to SPDEs.
In the following, we prove three useful inequalities.

\begin{lem}\label{lem:weightedholder}
Let $p,q,r\in[1,\infty]$ be such that $1/r=1/p+1/q$.
For any weights $w_1,w_2$ and functions $f\in L^p(w_1)$ and $g\in L^q(w_2)$, we have
$$
\|fg\|_{L^r(w_1w_2)}\le\|f\|_{L^p(w_1)}\|g\|_{L^q(w_2)}.
$$
\end{lem}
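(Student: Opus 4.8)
The plan is to prove this as a weighted version of the ordinary H\"older inequality, reducing it to the classical case by absorbing the weights into the functions. The key observation is that $\|fg\|_{L^r(w_1w_2)} = \|(fg)(w_1w_2)\|_{L^r(\bbR^d)} = \|(fw_1)(gw_2)\|_{L^r(\bbR^d)}$, since all factors are ordinary real-valued functions and multiplication is commutative and associative. Thus the statement is literally the classical H\"older inequality applied to the functions $fw_1$ and $gw_2$.

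First I would write out the identity $\|fg\|_{L^r(w_1w_2)} = \|(fw_1)(gw_2)\|_{L^r(\bbR^d)}$ using only the definition of $\|\cdot\|_{L^p(w)}$ and pointwise algebra of functions. Next I would invoke the classical H\"older inequality on $\bbR^d$ with exponents $p, q$ and conjugate-type relation $1/r = 1/p + 1/q$: for $u \in L^p(\bbR^d)$ and $v \in L^q(\bbR^d)$, one has $\|uv\|_{L^r(\bbR^d)} \le \|u\|_{L^p(\bbR^d)}\|v\|_{L^q(\bbR^d)}$. Applying this with $u = fw_1$ and $v = gw_2$ gives
$$
\|(fw_1)(gw_2)\|_{L^r(\bbR^d)} \le \|fw_1\|_{L^p(\bbR^d)}\|gw_2\|_{L^q(\bbR^d)} = \|f\|_{L^p(w_1)}\|g\|_{L^q(w_2)},
$$
which is exactly the claimed bound. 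One should note that the hypotheses $f \in L^p(w_1)$ and $g \in L^q(w_2)$ guarantee $fw_1 \in L^p(\bbR^d)$ and $gw_2 \in L^q(\bbR^d)$, so the classical inequality applies with finite right-hand side; no positivity or continuity of the weights beyond what is already assumed in Definition \ref{defweight} is needed here, only that they are measurable and real-valued.

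There is essentially no obstacle in this proof: it is a one-line manipulation plus a citation of the standard H\"older inequality, and the only thing to be slightly careful about is the endpoint cases where $p$, $q$, or $r$ equals $\infty$ — but the classical H\"older inequality already covers these (e.g. $r = \infty$ forces $p = q = \infty$, and $p = \infty$ gives $1/r = 1/q$ so $r = q$), so nothing special is required. I would therefore keep the proof to two or three lines.
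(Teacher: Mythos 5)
Your proof is correct and is essentially identical to the paper's: both rewrite $\|fg\|_{L^r(w_1w_2)}$ as $\|(fw_1)(gw_2)\|_{L^r(\bbR^d)}$ and then apply the classical H\"older inequality with exponents $p,q,r$. Nothing further is needed.
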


\begin{proof}
Since $\|fg\|_{L^r(w_1w_2)}=\|(fw_1)(gw_2)\|_{L^r}$, the result follows from H\"older's inequality.
\end{proof}

\begin{lem}[{\cite[Theorem 2.4]{MW17}}]\label{lem:weightedyoung}
Let $w$ be a $G$-controlled weight.
For any $T>0$, there exists a constant $C_T>0$ depending only on $G,\wco$, and $T$, such that,
for any $t\in(0,T]$, $1\le p\le q\le\infty$, and $f\in L^p(w)$, we have
\begin{align*}
\|G_t*f\|_{L^q(w)}\le C_T\,t^{-\frac{|\mfs|}\ell(\frac1p-\frac1q)}\|f\|_{L^p(w)}.
\end{align*}
\end{lem}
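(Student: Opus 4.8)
The plan is to reduce the weighted estimate to the classical (unweighted) Young convolution inequality by transferring the weight from the function onto the kernel. Since $G\ge0$, we have the pointwise bound $|(G_t*f)(x)|\le\int_{\bbR^d}G_t(x-y)|f(y)|\,dy$; multiplying by $w(x)$ and inserting $w(x)=w\big((x-y)+y\big)\le\wco(x-y)\,w(y)$ from \eqref{weightmoderate} gives
\[
|(G_t*f)(x)|\,w(x)\le\big((\wco G_t)*(|f|w)\big)(x)\qquad\text{for a.e. }x
\]
(in particular the convolution $G_t*f$ is absolutely convergent a.e., since $w>0$). Taking $L^q$ norms and applying Young's inequality yields
\[
\|G_t*f\|_{L^q(w)}\le\|(\wco G_t)*(|f|w)\|_{L^q}\le\|\wco G_t\|_{L^s}\,\|f\|_{L^p(w)},
\]
where $s\in[1,\infty]$ is defined by $\frac1s=1-\big(\frac1p-\frac1q\big)$; this is a legitimate Hölder exponent precisely because $p\le q$. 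So everything reduces to bounding $\|\wco G_t\|_{L^s}$.

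For this, substitute $z=t^{\mfs/\ell}u$ in $G_t(z)=t^{-|\mfs|/\ell}G(t^{-\mfs/\ell}z)$ and use $\prod_i t^{\mfs_i/\ell}=t^{|\mfs|/\ell}$ to get
\[
\|\wco G_t\|_{L^s}=t^{-\frac{|\mfs|}{\ell}\left(1-\frac1s\right)}\Big(\int_{\bbR^d}\big(G(u)\,\wco(t^{\mfs/\ell}u)\big)^s\,du\Big)^{1/s}
\]
(with the supremum replacing the integral when $s=\infty$). Since $1-\frac1s=\frac1p-\frac1q$, the prefactor is exactly the claimed power of $t$, and it remains to bound the $L^s$ factor uniformly in $t\in(0,T]$. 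Here \eqref{weightintegrable}, applied with $n=0$ and with one fixed $n>|\mfs|$, gives $G(u)\,\wco(t^{\mfs/\ell}u)\lesssim(1+\|u\|_\mfs)^{-n}$ for all $u\in\bbR^d$ and all $t\in(0,T]$, and $\int_{\bbR^d}(1+\|u\|_\mfs)^{-ns}\,du<\infty$ because $ns\ge n>|\mfs|$ (the sets $\{\|u\|_\mfs\le R\}$ have volume of order $R^{|\mfs|}$, via $u_i\mapsto R^{\mfs_i}u_i$); the case $s=\infty$, i.e. $p=1$, $q=\infty$, uses only \eqref{weightintegrable} with $n=0$. The constant obtained this way depends on $G$, $\wco$, $T$ only through the two suprema in \eqref{weightintegrable} and the value of $\int_{\bbR^d}(1+\|u\|_\mfs)^{-ns}\,du$, as required.

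The argument is short and essentially a bookkeeping of exponents, so there is no substantial obstacle. The only points needing a little care are checking that the Young exponent $s$ with $\frac1s=1-(\frac1p-\frac1q)$ always lies in $[1,\infty]$ and that $1-\frac1s$ matches $\frac1p-\frac1q$, and verifying the integrability of the polynomial tail produced by \eqref{weightintegrable} — which is exactly the place where the hypothesis that $w$ is $G$-controlled is used.
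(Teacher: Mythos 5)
Your proof is correct and follows essentially the same route as the paper: the pointwise bound $|(G_t*f)w|\le(\wco G_t)*(|f|w)$ via \eqref{weightmoderate}, Young's inequality with the exponent $s$ given by $\frac1s=1+\frac1q-\frac1p$, the scaling computation of $\|\wco G_t\|_{L^s}$, and the uniform bound of the remaining factor via \eqref{weightintegrable}. The only difference is that you spell out the integrability of $(1+\|u\|_\mfs)^{-n}$ explicitly, which the paper leaves implicit.
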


\begin{proof}
Since $|(G_t*f)(x)w(x)|\le\big((G_t\wco)*(|f|w)\big)(x)$
by the inequality \eqref{weightmoderate}, the result follows from Young's inequality. The proportional constant is $\|G_t\wco\|_{L^r(\bbR^d)}$, where $\frac1r=1+\frac1q-\frac1p$. Since
\begin{align*}
\|G_t\wco\|_{L^r(\bbR^d)}&=\big\|t^{-|\mfs|/\ell}G\big(t^{-\mfs/\ell}x\big)\wco(x)\big\|_{L_x^r(\bbR^d)}\\
&=t^{-\frac{|\mfs|}\ell(1-\frac1r)}\big\|G(x)\wco\big(t^{\mfs/\ell}x\big)\big\|_{L_x^r(\bbR^d)}
=t^{-\frac{|\mfs|}\ell(\frac1p-\frac1q)}\big\|G(x)\wco\big(t^{\mfs/\ell}x\big)\big\|_{L_x^r(\bbR^d)}
\end{align*}
by the scaling property, we have the result 
by using the condition \eqref{weightintegrable}.
\end{proof}

\begin{lem}\label{lem:weightedtranslation}
Let $w$ be a $G$-controlled weight.
For any $p\in[1,\infty]$, $f\in L^p(w)$, and $h\in\bbR^d$, we have
\begin{align*}
\|f(\cdot-h)\|_{L^p(w)}\le \wco(h)\|f\|_{L^p(w)}.
\end{align*}
\end{lem}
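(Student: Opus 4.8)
The plan is to reduce the claim directly to the moderateness inequality \eqref{weightmoderate} by a change of variables. Unwinding the definition, $\|f(\cdot-h)\|_{L^p(w)}=\|f(\cdot-h)\,w\|_{L^p(\bbR^d)}$, and substituting $y=x-h$ (legitimate since Lebesgue measure is translation invariant), this equals $\|f(y)\,w(y+h)\|_{L_y^p(\bbR^d)}$. Now apply \eqref{weightmoderate} with $x$ replaced by $h$: pointwise in $y$ we have $w(y+h)\le\wco(h)\,w(y)$, hence $|f(y)\,w(y+h)|\le\wco(h)\,|f(y)\,w(y)|$. Taking $L^p$ norms in $y$ and using monotonicity of the norm, we obtain $\|f(y)\,w(y+h)\|_{L_y^p}\le\wco(h)\,\|f(y)\,w(y)\|_{L_y^p}=\wco(h)\,\|f\|_{L^p(w)}$, which is the asserted bound.

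There is essentially no obstacle: the only ingredients are the defining property \eqref{weightmoderate} of a $G$-controlled (i.e.\ $\wco$-moderate) weight and translation invariance of Lebesgue measure, so the argument is a one-line computation. I would carry it out for $p<\infty$ and remark that the case $p=\infty$ is identical with the integral replaced by an essential supremum; in fact, phrasing everything in terms of $\|\cdot\|_{L^p(\bbR^d)}$ covers both cases uniformly, so the single computation above suffices.
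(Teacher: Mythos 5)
Your proof is correct and is essentially the paper's argument: both rest on the moderateness inequality \eqref{weightmoderate} applied with increment $h$ plus translation invariance of Lebesgue measure, merely performed in the opposite order (you change variables first and then bound $w(y+h)\le\wco(h)w(y)$, while the paper bounds $|f(x-h)|w(x)\le\wco(h)|f(x-h)|w(x-h)$ pointwise and then invokes translation invariance of the unweighted $L^p$ norm).
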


\begin{proof}
The result follows from the inequality $|f(x-h)|w(x)\le \wco(h)|f(x-h)|w(x-h)$ and the translation invariance of the unweighted $L^p(\bbR^d)$ norm.
\end{proof}

\subsection{Semigroup of operators}

We introduce a semigroup of operators.

\begin{defi}\label{asmp1}
We call a family of continuous functions $\{Q_t:\bbR^d\times\bbR^d\to\bbR\}_{t>0}$ a \emph{$G$-type semigroup (of operators)} if it satisfies the following properties.
\begin{enumerate}
\renewcommand{\theenumi}{(\roman{enumi})}
\renewcommand{\labelenumi}{(\roman{enumi})}
\item\label{asmp1:semigroup}
(Semigroup property)
For any $0<s<t$ and $x,y\in\bbR^d$,
$$
\int_{\bbR^d}Q_{t-s}(x,z)Q_s(z,y)dz=Q_t(x,y).
$$
\item\label{asmp1:conservative}
(Conservativity)
For any $x\in\bbR^d$,
$$
\lim_{t\downarrow0}\int_{\bbR^d}Q_t(x,y)dy=1.
$$
\item\label{asmp1:gauss}
(Upper $G$-type estimate)
There exists a constant $C_1>0$ such that, for any $t>0$ and $x,y\in\bbR^d$,
$$
|Q_t(x,y)|\le C_1G_t(x-y).
$$
\item\label{asmp2:derivative}
(Time derivative)
For any $x,y\in\bbR^d$, $Q_t(x,y)$ is differentiable with respect to $t$. Moreover, there exists a constant $C_2>0$ such that, for any $t>0$ and $x,y\in\bbR^d$,
$$
|\partial_tQ_t(x,y)|\le C_2\, t^{-1}G_t(x-y).
$$
\end{enumerate}
\end{defi}

\begin{exam}\label{exam:FS}
We have in mind a fundamental solution of an anisotropic parabolic operator with bounded and H\"older continuous coefficients
\begin{align}\label{exam:parabolicop}
\partial_t-P(x,\partial_x):=\partial_t-\sum_{|{\bf k}|_\mfs\le \ell}a_{\bf k}(x)\partial_x^{\bf k},
\end{align}
where we suppose that $\ell>\max_{1\le i\le d}\mfs_i$ and $P$ satisfies 
the uniform ellipticity
$$
\mathop{\text{\rm Re}}P(x,i\xi):=
\mathop{\text{\rm Re}}\sum_{|{\bf k}|_\mfs\le \ell}a_{\bf k}(x)(i\xi)^{\bf k}\le-C\|\xi\|_\mfs^\ell,\qquad\xi\in\bbR^d
$$
for some constant $C>0$. As shown in \cite[Appendix A]{BHK22}, the unique solution $Q_t(x,y)$ of
$$
\left\{
\begin{aligned}
\big(\partial_t-P(x,\partial_x)\big)Q_t(x,y)&=0,&&t>0,\ x,y\in\bbR^d,\\
\lim_{t\downarrow0}Q_t(x,\cdot)&=\delta(x-\cdot),\quad&&x\in\bbR^d,
\end{aligned}
\right.
$$
(where $\delta$ is Dirac's delta and the latter convergence is in the distributional sense) satisfies the properties in Definition \ref{asmp1}
with the function
\begin{align}\label{eq:Gchoice}
G(x)=\exp\bigg\{-c\sum_{i=1}^d|x_i|^{\ell/(\ell-\mfs_i)}\bigg\}
\end{align}
for some $c>0$.
An elementary example is the isotropic operator $P(\partial_x)=\Delta$, where $\mfs=(1,1,\dots,1)$ and $\ell=2$. Then $Q_t(x,y)$ is the usual heat kernel $\frac1{(4\pi t)^{d/2}}\exp(-\frac{|x-y|^2}{4t})$ and we can choose $G(x)=e^{-c|x|^2}$, where $|\cdot|$ is the Euclidean norm.
Another example considered in \cite{OW19, BH20} is given by
$$
P(\partial_x):=\partial_{x_1}^2-\Delta_{x'}^2,\qquad\Delta_{x'}:=\sum_{i=2}^d\partial_{x_i}^2,
$$
which is more suitable for parabolic problems. Here, $x_1$ and $x':=(x_i)_{i=2}^d$ are considered as temporal and spatial variables, respectively. In this case, we can choose $\mfs=(2,1,1,\dots,1)$, $\ell=4$, and
$$
G(x)=\exp\big\{-c\big(|x_1|^2+|x'|^{4/3}\big)\big\}.
$$

We return to the general case \eqref{exam:parabolicop}.
As for $G$-controlled weights, the most trivial choice is the flat function $w=1$.
Another weight we consider in \cite{BPHZ} is the function
$$
w(x)=e^{-a\|x\|_{\mfs}}
$$
with some $a>0$.
We can easily see that $\wco(x)=e^{a\|x\|_{\mfs}}$ satisfies \eqref{weightmoderate} by using the triangle inequality of $\|\cdot\|_\mfs$.
The condition \eqref{weightintegrable} holds because the variable inside the exponential function of \eqref{eq:Gchoice} is superlinear with respect to $\|x\|_\mfs$.
\end{exam}

We identify the function $Q_t(x,y)$ with a continuous linear operator on $L^p(w)$ by
$$
(Q_tf)(x):=:Q_t(x,f):=\int_{\bbR^d}Q_t(x,y)f(y)dy,\qquad f\in L^p(w),\ x\in\bbR^d.
$$
Note that $Q_t$ is closed in $L^p(w)$ because of Definition \ref{asmp1}-\ref{asmp1:gauss} and Lemma \ref{lem:weightedyoung}.

\begin{prop}\label{prop:operatorQt}
Let $w$ be a $G$-controlled weight and let $p\in[1,\infty]$.
For any $f\in L^p(w)$ and $t>0$, $Q_tf$ is a continuous function.
In addition, if $f\in C(\bbR^d)\cap L^p(w)$, then
$$
\lim_{t\downarrow0}(Q_tf)(x)=f(x)
$$
for any $x\in\bbR^d$.
\end{prop}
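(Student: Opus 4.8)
The plan is to prove the two assertions separately, exploiting Definition \ref{asmp1}-\ref{asmp1:gauss} together with Lemma \ref{lem:weightedyoung} for the continuity, and Definition \ref{asmp1}-\ref{asmp1:conservative} together with a $G$-controlled-weight estimate for the pointwise convergence.

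First, for the continuity of $Q_tf$: fix $t>0$ and $f\in L^p(w)$. For $x,x'\in\bbR^d$ I would write
$$
(Q_tf)(x)-(Q_tf)(x')=\int_{\bbR^d}\big(Q_t(x,y)-Q_t(x',y)\big)f(y)\,dy.
$$
Since $w$ is a weight, $w(y)\le1$, so $|f(y)|\le|f(y)|w(y)$ pointwise only when $w\ge$ something; more carefully, I bound $|f(y)|$ using H\"older in the weighted space. Write $\int|Q_t(x,y)-Q_t(x',y)|\,|f(y)|\,dy=\int|Q_t(x,y)-Q_t(x',y)|w(y)^{-1}\cdot|f(y)|w(y)\,dy$ and apply H\"older with exponents $p'$ and $p$. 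The factor $|f(y)|w(y)$ lies in $L^p$; for the other factor, using \ref{asmp1:gauss} and \eqref{weightmoderate} (in the form $w(y)^{-1}\le \wco(-y)$, or rather estimating $G_t(x-y)w(y)^{-1}$), together with condition \eqref{weightintegrable}, I get that $y\mapsto G_t(x-y)w(y)^{-1}$ is in $L^{p'}$ locally uniformly in $x$ over compact sets, hence in $L^{p'}$. Continuity of $Q_tf$ at $x'$ then follows from dominated convergence: as $x\to x'$, $Q_t(x,y)\to Q_t(x',y)$ for each $y$ by continuity of $Q_t$, and the integrand is dominated by an integrable function of $y$ (uniformly for $x$ near $x'$, again via \ref{asmp1:gauss} and the local boundedness of the Gaussian-type bound). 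An alternative, cleaner route is to note that $Q_tf = G_{t/2}\ast'(Q_{t/2}f)$-type decomposition is not available because $Q$ is inhomogeneous, so I will carry out the direct dominated-convergence argument sketched above.

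Second, for the convergence $(Q_tf)(x)\to f(x)$ as $t\downarrow0$ when $f\in C(\bbR^d)\cap L^p(w)$: fix $x\in\bbR^d$ and write, using \ref{asmp1:conservative},
$$
(Q_tf)(x)-f(x)=\int_{\bbR^d}Q_t(x,y)\big(f(y)-f(x)\big)\,dy+f(x)\Big(\int_{\bbR^d}Q_t(x,y)\,dy-1\Big),
$$
and the second term vanishes as $t\downarrow0$ by \ref{asmp1:conservative}. For the first term I split the $y$-integral into $\|y-x\|_\mfs\le\delta$ and $\|y-x\|_\mfs>\delta$. On the near region, $|f(y)-f(x)|$ is small by continuity of $f$ at $x$, and $\int|Q_t(x,y)|\,dy\le C_1\int G_t(x-y)\,dy=C_1\|G\|_{L^1}$ is bounded uniformly in $t$ by \ref{asmp1:gauss}, so that contribution is $O(\text{osc})$. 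On the far region I use \ref{asmp1:gauss} to bound $|Q_t(x,y)|\le C_1 G_t(x-y)$ and then use H\"older against $f(\cdot)-f(x)\in L^p(w)+\text{const}$: more precisely bound $|f(y)-f(x)|\le |f(y)|+|f(x)|$ and estimate $\int_{\|y-x\|_\mfs>\delta}G_t(x-y)|f(y)|\,dy$ by H\"older together with the fact that $\|\,\mathbf 1_{\{\|\cdot\|_\mfs>\delta\}}G_t\,w^{-1}\|_{L^{p'}}\to0$ as $t\downarrow0$ — this follows from the scaling $G_t(z)=t^{-|\mfs|/\ell}G(t^{-\mfs/\ell}z)$, which concentrates all mass near the origin, combined with the decay built into \eqref{weightintegrable}. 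Letting $t\downarrow0$ and then $\delta\downarrow0$ gives the claim.

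\textbf{Main obstacle.} The routine part is the dominated-convergence/H\"older bookkeeping; the one genuinely delicate point is handling the weight on the far region in the second assertion. Since $w$ is only assumed continuous with values in $(0,1]$, $w^{-1}$ can grow, and one must confirm that $\|\mathbf 1_{\{\|\cdot\|_\mfs>\delta\}}G_t\,w^{-1}\|_{L^{p'}}\to0$; this is exactly where the $G$-controlled hypothesis \eqref{weightintegrable} is essential, because it forces $\wco(t^{\mfs/\ell}\cdot)G(\cdot)$ to decay superpolynomially uniformly for small $t$, and $w(y)^{-1}\le \wco(y-x)w(x)^{-1}$ lets one transfer that decay to $G_t(x-y)w(y)^{-1}$ after the change of variables $y=x+t^{\mfs/\ell}z$. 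Once that lemma-level estimate is isolated, both statements follow cleanly.
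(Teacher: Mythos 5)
Your proposal is correct and follows essentially the same route as the paper: continuity via a dominated-convergence argument in which the Gaussian bound, the moderateness $w(y)^{-1}\le \wco(x-y)w(x)^{-1}$, and \eqref{weightintegrable} supply the integrable dominating function, and the $t\downarrow0$ limit via conservativity plus a near/far splitting with a H\"older estimate of $\|(G_t\wco)\mathbf{1}_{\{\|\cdot\|_\mfs\ge\delta\}}\|_{L^{p'}}\to0$ on the far region. The only differences are cosmetic (the paper multiplies by $w(x)$ and reduces to $t=1$, $x=0$ rather than dividing by the locally bounded $w(x)^{-1}$), and the sign slip $\wco(y-x)$ versus $\wco(x-y)$ in your last paragraph is immaterial.
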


\begin{proof}
Let $f\in L^p(w)$. To show the continuity of $(Q_tf)(x)$ with respect to $x$, it is sufficient to consider the case $t=1$ and $x=0$. Note that, in the region $\|x\|_\mfs\le1$, we have
\begin{align*}
|Q_1(x,y)f(y)w(x)|
&\lesssim |G(x-y)\wco(x-y)||f(y)w(y)|\lesssim \frac{|f(y)w(y)|}{1+\|y\|_\mfs^n}
\end{align*}
for any $n\ge0$ by the property \eqref{weightintegrable}. This implies that
\begin{align*}
\lim_{x\to0}(Q_1f)(x)w(x)=\int_{\bbR^d}\lim_{x\to0}Q_1(x,y)f(y)w(x)dy=(Q_1f)(0)w(0)
\end{align*}
by Lebesgue's convergence theorem. 
Since $w$ is strictly positive and continuous, we have $\lim_{x\to0}(Q_1f)(x)=(Q_1f)(0)$.

Next let $f\in C(\bbR^d)\cap L^p(w)$. To show the continuity with respect to $t$, it is sufficient to consider the case $x=0$. For any $\varepsilon>0$, we can choose $\delta>0$ such that $|f(y)-f(0)|<\varepsilon$ for any $\|y\|_\mfs<\delta$, and have
\begin{align*}
|w(0)&(Q_tf-f)(0)|=w(0)\bigg|\int_{\bbR^d}Q_t(0,y)\big(f(y)-f(0)\big)dy+\bigg(\int_{\bbR^d}Q_t(0,y)dy-1\bigg)f(0)\bigg|\\
&\le w(0)\varepsilon\int_{\|y\|_\mfs<\delta}G_t(-y)dy+w(0)\int_{\|y\|_\mfs\ge\delta}G_t(-y)|f(y)|dy\\
&\quad+w(0)\int_{\|y\|_\mfs\ge\delta}G_t(-y)|f(0)|dy+w(0)|f(0)|\bigg|\int_{\bbR^d}Q_t(0,y)dy-1\bigg|.
\end{align*}
In the far right-hand side, the only nontrivial part is the second term. We bound it from above by
$$
\int_{\|y\|_\mfs\ge\delta}G_t(-y)\wco(-y)|f(y)|w(y)dy\le\|(G_t\wco)(y)\|_{L^{p'}(\|y\|_\mfs\ge\delta)}\|fw\|_{L^p(\bbR^d)},
$$
where $1/p+1/{p'}=1$. We then have that $\|(G_t\wco)(y)\|_{L^{p'}(\|y\|_\mfs\ge\delta)}\to0$ as $t\downarrow0$ by the property \eqref{weightintegrable}.
\end{proof}

\subsection{Besov spaces associated with semigroup}

In what follows, we fix a $G$-type semigroup $\{Q_t\}_{t>0}$ and a $G$-controlled weight $w$.
We define the weighted Besov spaces associated with $\{Q_t\}_{t>0}$, as studied in \cite{BDY12, GL15, BB16}.

\begin{defi}
For every $\alpha\le0$ and $p,q\in[1,\infty]$, we define the Besov space $B_{p,q}^{\alpha,Q}(w)$ as the completion of $L_c^p(w)$ under the norm
$$
\|f\|_{B_{p,q}^{\alpha,Q}(w)}:=\|Q_1f\|_{L^p(w)}+\big\|t^{-\alpha/\ell}\|Q_t f\|_{L^p(w)}\big\|_{L^q(0,1;t^{-1}dt)}.
$$
\end{defi}

When $\mfs=(1,1,\dots,1)$, $\ell=2$, and $Q_t$ is the heat semigroup $e^{t\Delta}$, the above norm (with $\alpha<0$ and $w=1$) is equivalent to the classical Besov norm in Euclidean setting. See e.g., \cite[Theorem 2.34]{BCD11} or \cite[Theorem 2.6.4]{Tri92}.

\begin{rem}\label{QtsymmetricS'}
We can see that $\|\cdot\|_{B_{p,q}^{\alpha,Q}(w)}$ is nondegenerate in $L_c^p(w)$ by the temporal continuity of $Q_t$ (Proposition \ref{prop:operatorQt}) and the density argument. This is the only reason why we define the Besov spaces from $L_c^p(w)$ as above.
On the other hand, if $Q_t$ is symmetric in the sense that $Q_t(y,x)=Q_t(x,y)$ for any $x,y\in\bbR^d$, then for any locally integrable function $f$ and $\varphi\in C_0^\infty(\bbR^d)$, we have
$$
\int_{\bbR^d}(Q_tf)(x)\varphi(x)dx=\int_{\bbR^d}f(x)(Q_t\varphi)(x)dx\xrightarrow{t\downarrow0} \int_{\bbR^d}f(x)\varphi(x)dx,
$$
which implies $Q_tf\to f$ as $t\downarrow0$ in the distributional sense. In this case, $\|\cdot\|_{B_{p,q}^{\alpha,Q}(w)}$ is nondegenerate in whole $L^p(w)$.
\end{rem}

The following result implies that we can ignore the difference of the parameter $q$ at the cost of infinitesimal difference of the parameter $\alpha$.
Therefore, we pay less attention to $q$ in this paper and write $B_p^{\alpha,Q}(w):=B_{p,\infty}^{\alpha,Q}(w)$.

\begin{prop}\label{prop:almostequiv}
For any $f\in L^p(w)$, the following inequalities hold.
\begin{enumerate}
\renewcommand{\theenumi}{(\arabic{enumi})}
\renewcommand{\labelenumi}{(\arabic{enumi})}
\item\label{almostequiv2}
For any $1\le q_1\le q_2\le\infty$ and $\alpha_1<\alpha_2\le0$, we have $\|f\|_{B_{p,q_1}^{\alpha_1,Q}(w)}\lesssim\|f\|_{B_{p,q_2}^{\alpha_2,Q}(w)}$.
\item\label{almostequiv3}
For any $\alpha\le0$, we have $\|f\|_{B_{p,\infty}^{\alpha,Q}(w)}\lesssim\|f\|_{B_{p,1}^{\alpha,Q}(w)}$.
\end{enumerate}
Here the implicit proportional constants depend only on $G,\wco$, and the regularity and integrability exponents.
\end{prop}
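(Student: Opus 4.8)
The plan is to reduce everything to two elementary facts: the monotonicity of $\ell^q$-type norms in $q$, and the fact that on the finite measure space $(0,1)$ with measure $t^{-1}dt$ one cannot simply bound an $L^{q_1}$ norm by an $L^{q_2}$ norm when $q_1 > q_2$ — but one *can* do so after inserting an extra power of $t$, because $\int_0^1 t^{\epsilon} \, t^{-1} dt < \infty$ for any $\epsilon>0$. This is exactly where the infinitesimal loss $\alpha_1 < \alpha_2$ gets spent.

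\medskip

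\noindent\textbf{Step 1 (part \ref{almostequiv2}).} Write $g(t) := \|Q_t f\|_{L^p(w)}$, so the quantity to control is $\big\|t^{-\alpha_1/\ell} g(t)\big\|_{L^{q_1}(0,1;t^{-1}dt)}$, and I want to bound it by $\|Q_1 f\|_{L^p(w)} + \big\|t^{-\alpha_2/\ell} g(t)\big\|_{L^{q_2}(0,1;t^{-1}dt)}$. First reduce $q_1$ to $q_2$: if $q_1 \le q_2$, then by the elementary inequality $\|h\|_{L^{q_1}(\mu)} \le \mu(\text{space})^{1/q_1 - 1/q_2}\|h\|_{L^{q_2}(\mu)}$ — wait, that is false for $q_1 < q_2$ on a finite measure space; the correct direction is $\|h\|_{L^{q_1}} \le \|h\|_{L^{q_2}}$ only when $q_1 \ge q_2$ for a \emph{probability} measure. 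Here $q_1 \le q_2$, so I instead first handle the regularity gap. Set $\epsilon := (\alpha_2 - \alpha_1)/\ell > 0$ and write
\begin{align*}
t^{-\alpha_1/\ell} g(t) = t^{\epsilon} \cdot t^{-\alpha_2/\ell} g(t).
\end{align*}
By Hölder's inequality on $(0,1;t^{-1}dt)$ with exponents $r$ and $q_2$ where $1/q_1 = 1/r + 1/q_2$ (valid since $q_1 \le q_2$), I get
\begin{align*}
\big\|t^{-\alpha_1/\ell}g(t)\big\|_{L^{q_1}(0,1;t^{-1}dt)} \le \big\|t^{\epsilon}\big\|_{L^{r}(0,1;t^{-1}dt)} \cdot \big\|t^{-\alpha_2/\ell}g(t)\big\|_{L^{q_2}(0,1;t^{-1}dt)},
\end{align*}
and $\|t^{\epsilon}\|_{L^r(0,1;t^{-1}dt)}^r = \int_0^1 t^{\epsilon r - 1}\,dt = (\epsilon r)^{-1} < \infty$ (with the obvious modification $\sup_{t\in(0,1)} t^{\epsilon} = 1$ when $r = \infty$). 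Adding the $\|Q_1 f\|_{L^p(w)}$ term to both sides gives part \ref{almostequiv2}.

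\medskip

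\noindent\textbf{Step 2 (part \ref{almostequiv3}).} Here $\alpha$ is unchanged, so I cannot buy a power of $t$. Instead I use that $g$ is, up to constants, monotone along the semigroup: for $t \le s \le 1$ I have $Q_s f = Q_{s-t} Q_t f$, hence by Lemma \ref{lem:weightedyoung} (with $p = q$ there, so no loss of $t$-power and $C_T$ uniform on $(0,1]$),
\begin{align*}
g(s) = \|Q_{s-t}(Q_t f)\|_{L^p(w)} \le C\,\|Q_t f\|_{L^p(w)} = C\,g(t), \qquad 0 < t \le s \le 1.
\end{align*}
Therefore $g(1) = \|Q_1 f\|_{L^p(w)} \le C\, g(t)$ for every $t\in(0,1]$ as well, and more usefully, for any fixed $t\in(0,1]$,
\begin{align*}
t^{-\alpha/\ell} g(t) \le t^{-\alpha/\ell} \cdot C \inf_{s\in(t,2t)\cap(0,1]} g(s) \lesssim \Big(\int_{t}^{2t} s^{-\alpha/\ell} g(s)\, s^{-1} ds\Big)\Big/\Big(\int_t^{2t} s^{-1}ds\Big) \lesssim \int_{t}^{2t} s^{-\alpha/\ell} g(s)\, \frac{ds}{s},
\end{align*}
using $-\alpha/\ell \ge 0$ so that $t^{-\alpha/\ell} \lesssim s^{-\alpha/\ell}$ on $s\in(t,2t)$, and $\int_t^{2t} s^{-1}ds = \log 2$. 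Taking the supremum over $t\in(0,1]$ on the left and bounding each integral on the right by the full integral $\int_0^1 s^{-\alpha/\ell} g(s)\, s^{-1} ds = \big\|s^{-\alpha/\ell}g(s)\big\|_{L^1(0,1;s^{-1}ds)}$ yields
\begin{align*}
\big\|t^{-\alpha/\ell} g(t)\big\|_{L^\infty(0,1;t^{-1}dt)} \lesssim \big\|s^{-\alpha/\ell} g(s)\big\|_{L^1(0,1;s^{-1}ds)}.
\end{align*}
Adding $\|Q_1 f\|_{L^p(w)}$ to both sides gives part \ref{almostequiv3}. The tracking of constants (they depend only on $G$, $\wco$, and the exponents, via $C_T$ from Lemma \ref{lem:weightedyoung} with $T=1$) is immediate from the above.

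\medskip

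\noindent\textbf{Main obstacle.} The only genuinely non-formal point is Step 2: part \ref{almostequiv3} has *no* room to trade regularity, so one must exploit structure of the semigroup, namely the near-monotonicity $g(s) \lesssim g(t)$ for $s \ge t$ coming from $Q_s = Q_{s-t} Q_t$ and the $L^p \to L^p$ bound in Lemma \ref{lem:weightedyoung}. Once that observation is in hand, converting a pointwise-in-$t$ supremum into an $L^1$-in-$t$ average over the dyadic-type window $(t,2t)$ is routine. Part \ref{almostequiv2} is entirely soft — just Hölder on a finite measure space plus the elementary integral $\int_0^1 t^{\epsilon r - 1}dt < \infty$.
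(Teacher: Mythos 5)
Your part \ref{almostequiv2} is fine and is in fact the paper's own argument verbatim: H\"older on $(0,1;t^{-1}dt)$ with $1/q_1=1/r+1/q_2$ and the observation that $\|t^{(\alpha_2-\alpha_1)/\ell}\|_{L^r(0,1;t^{-1}dt)}<\infty$.

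Step 2, however, has a genuine error of direction. From $Q_sf=Q_{s-t}(Q_tf)$, Definition \ref{asmp1}-\ref{asmp1:gauss} and Lemma \ref{lem:weightedyoung} you correctly obtain $g(s)\le C\,g(t)$ for $0<t\le s\le1$, i.e.\ $g$ is almost \emph{decreasing} along the semigroup. But the displayed claim $g(t)\le C\inf_{s\in(t,2t)\cap(0,1]}g(s)$ is the opposite (``almost increasing'') statement, which does not follow and is false with a constant uniform in $f$: for the heat semigroup with $f$ essentially a high-frequency mode one has $g(s)\asymp e^{-s\lambda^2}$, so $g(t)/\inf_{s\in(t,2t)}g(s)\asymp e^{t\lambda^2}$ is unbounded as $\lambda\to\infty$ (and for $t$ near $1$ the window $(t,2t)\cap(0,1]$ degenerates anyway). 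The repair is simply to average over the window of \emph{smaller} times: for $s\in(t/2,t)$ one has $g(t)=\|Q_{t-s}(Q_sf)\|_{L^p(w)}\le C\,g(s)$ by the same two facts, and since $t\le 2s$ and $-\alpha\ge0$ also $t^{-\alpha/\ell}\le 2^{-\alpha/\ell}s^{-\alpha/\ell}$, whence
\[
t^{-\alpha/\ell}g(t)\;\lesssim\;\frac{1}{\log 2}\int_{t/2}^{t}s^{-\alpha/\ell}g(s)\,\frac{ds}{s}\;\le\;\|f\|_{B_{p,1}^{\alpha,Q}(w)} .
\]
Taking the supremum over $t\in(0,1]$ gives \ref{almostequiv3} (the $\|Q_1f\|_{L^p(w)}$ term is then not even needed for this part). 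With this orientation your argument goes through and is genuinely different from the paper's proof, which instead writes $Q_1f-Q_tf=\int_t^1\partial_sQ_sf\,ds$ and invokes the time-derivative bound of Definition \ref{asmp1}-\ref{asmp2:derivative}; the corrected semigroup-monotonicity route uses only the semigroup property, the upper $G$-type estimate, and Lemma \ref{lem:weightedyoung}, so it is slightly more economical in hypotheses, but as written the step fails.
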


\begin{rem}
As a result of \ref{almostequiv2} and \ref{almostequiv3}, we have
\begin{align*}
\|f\|_{B_{p,q_1}^{\alpha-2\varepsilon,Q}(w)}
\lesssim\|f\|_{B_{p,\infty}^{\alpha-\varepsilon,Q}(w)}
\lesssim\|f\|_{B_{p,1}^{\alpha-\varepsilon,Q}(w)}
\lesssim\|f\|_{B_{p,q_2}^{\alpha,Q}(w)}
\end{align*}
for any $\alpha\le0$, $\varepsilon>0$, and $q_1,q_2\in[1,\infty]$.
\end{rem}

\begin{proof}
For \ref{almostequiv2}, taking $r\in[1,\infty]$ such that $1/q_1=1/r+1/q_2$, we have
\begin{align*}
&\big\|t^{-\alpha_1/\ell}\|Q_t f\|_{L^p(w)}\big\|_{L^{q_1}(0,1;t^{-1}dt)}\\
&\le\big\|t^{(\alpha_2-\alpha_1)/\ell}\big\|_{L^r(0,1;t^{-1}dt)}\big\|t^{-\alpha_2/\ell}\|Q_t f\|_{L^p(w)}\big\|_{L^{q_2}(0,1;t^{-1}dt)}
\end{align*}
by H\"older's inequality.
Since $\big\|t^{(\alpha_2-\alpha_1)/\ell}\big\|_{L^r(0,1;t^{-1}dt)}<\infty$, we have the result. 
Next we prove \ref{almostequiv3}. By using Definition \ref{asmp1}-\ref{asmp2:derivative} and Lemma \ref{lem:weightedyoung}, we have
\begin{align*}
\|Q_tf-Q_1f\|_{L^p(w)}
&\le\int_t^1\|\partial_sQ_sf\|_{L^p(w)}ds
=\int_t^1\|(\partial_sQ)_{s/2}Q_{s/2}f\|_{L^p(w)}ds\\
&\lesssim \int_t^1\|Q_{s/2}f\|_{L^p(w)}\frac{ds}s
=\int_{t/2}^{1/2}\|Q_sf\|_{L^p(w)}\frac{ds}s.
\end{align*}
Therefore,
\begin{align*}
t^{-\alpha/\ell}\|Q_tf\|_{L^p(w)}
&\lesssim t^{-\alpha/\ell}\bigg(\|Q_1f\|_{L^p(w)}+\int_{t/2}^1\|Q_sf\|_{L^p(w)}\frac{ds}s\bigg)\\
&\lesssim\|Q_1f\|_{L^p(w)}+\int_{t/2}^1s^{-\alpha/\ell}\|Q_sf\|_{L^p(w)}\frac{ds}s
\le\|f\|_{B_{p,1}^{\alpha,Q}(w)}.
\end{align*}
By taking the supremum over $t\in(0,1]$, we have the result.
\end{proof}

The following result is an analogue of the classical Besov embedding.

\begin{prop}
Let $\alpha\le0$, $p,q,r\in[1,\infty]$, and $r\ge p$. For any $f\in L^p(w)$, we have the inequality
$$
\|f\|_{B_{r,q}^{\alpha-|\mfs|(\frac1p-\frac1r),Q}(w)}\lesssim\|f\|_{B_{p,q}^{\alpha,Q}(w)}.
$$
\end{prop}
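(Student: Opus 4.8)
The plan is to prove the Besov embedding by reducing it to the weighted Young inequality (Lemma \ref{lem:weightedyoung}). The key observation is that for $t\in(0,1]$ the operator $Q_t$ factors through the semigroup property as $Q_t = Q_{t/2}\circ Q_{t/2}$, and each half-step $Q_{t/2}$ acts as a smoothing operator with a kernel bounded by $C_1 G_{t/2}(x-y)$. Since $G_{t/2}(x-y)$ is of the form $(t/2)^{-|\mfs|/\ell}G((t/2)^{-\mfs/\ell}(x-y))$, it behaves exactly like the convolution kernels treated in Lemma \ref{lem:weightedyoung}, so mapping from $L^p(w)$ to $L^r(w)$ with $r\ge p$ costs a factor $t^{-\frac{|\mfs|}{\ell}(\frac1p-\frac1r)}$ (up to the constant $(1/2)^{-\cdots}$, absorbed into $\lesssim$).

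Concretely, first I would fix $f\in L^p(w)$ and $t\in(0,1]$ and write $Q_tf = Q_{t/2}(Q_{t/2}f)$. Applying Lemma \ref{lem:weightedyoung} with the pair $(p,r)$ (legitimate since $p\le r$ and $t/2\in(0,1/2]\subseteq(0,1]$) gives
\begin{align*}
\|Q_tf\|_{L^r(w)} = \|Q_{t/2}(Q_{t/2}f)\|_{L^r(w)}
\lesssim (t/2)^{-\frac{|\mfs|}{\ell}(\frac1p-\frac1r)}\|Q_{t/2}f\|_{L^p(w)}
\lesssim t^{-\frac{|\mfs|}{\ell}(\frac1p-\frac1r)}\|Q_{t/2}f\|_{L^p(w)}.
\end{align*}
Multiplying by $t^{-(\alpha-|\mfs|(\frac1p-\frac1r))/\ell}$, the powers of $t$ combine: $t^{-(\alpha-|\mfs|(\frac1p-\frac1r))/\ell}\cdot t^{-\frac{|\mfs|}{\ell}(\frac1p-\frac1r)} = t^{-\alpha/\ell}\cdot t^{-\frac{2|\mfs|}{\ell}(\frac1p-\frac1r)}$; wait — I should instead first pull out the full exponent needed so that what remains is exactly the $L^p$-Besov integrand. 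Let me restate: set $\beta := \alpha - |\mfs|(\frac1p-\frac1r)$. Then
$$
t^{-\beta/\ell}\|Q_tf\|_{L^r(w)} \lesssim t^{-\beta/\ell}\,t^{-\frac{|\mfs|}{\ell}(\frac1p-\frac1r)}\|Q_{t/2}f\|_{L^p(w)} = t^{-\alpha/\ell}\|Q_{t/2}f\|_{L^p(w)}.
$$
Since $t^{-\alpha/\ell}\le 2^{-\alpha/\ell}(t/2)^{-\alpha/\ell}$ for $\alpha\le0$, a change of variables $s=t/2$ in the $L^q(0,1;t^{-1}dt)$ norm shows $\big\|t^{-\beta/\ell}\|Q_tf\|_{L^r(w)}\big\|_{L^q(0,1;t^{-1}dt)} \lesssim \big\|s^{-\alpha/\ell}\|Q_sf\|_{L^p(w)}\big\|_{L^q(0,1/2;s^{-1}ds)} \le \|f\|_{B^{\alpha,Q}_{p,q}(w)}$, using that $L^q$ over $(0,1/2)$ is dominated by $L^q$ over $(0,1)$. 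For the endpoint term, $\|Q_1f\|_{L^r(w)} = \|Q_{1/2}Q_{1/2}f\|_{L^r(w)} \lesssim \|Q_{1/2}f\|_{L^p(w)} \lesssim \|Q_1f\|_{L^p(w)} + \|s^{-\alpha/\ell}\|Q_sf\|_{L^p(w)}\big\|_{L^q}$ once more, or even more directly by applying Lemma \ref{lem:weightedyoung} with $t=1/2$ and then a further half-step; in any case it is bounded by $\|f\|_{B^{\alpha,Q}_{p,q}(w)}$.

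The only genuinely delicate point — and the main obstacle — is bookkeeping of the $t$-powers and making sure the change of variables $s=t/2$ does not spoil the Besov norm structure, in particular that the rescaled time integral over $(0,1/2)$ is controlled by the one over $(0,1)$ and that the factor $2^{-\alpha/\ell}$ (which is $\ge1$ for $\alpha\le0$) is harmless since it is a constant absorbed into $\lesssim$. Everything else is a direct consequence of the semigroup property together with Lemma \ref{lem:weightedyoung}, so the proof is short. Finally, since these estimates are uniform and the bound extends by density from $L^p_c(w)$ to the completion, the inequality holds for all $f\in L^p(w)$; the implicit constants depend only on $G$, $\wco$, and the exponents, as claimed.
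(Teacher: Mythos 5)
Your argument is correct and is essentially the paper's proof: factor $Q_t=Q_{t/2}Q_{t/2}$ via the semigroup property, apply the weighted Young estimate (Lemma \ref{lem:weightedyoung}, using that $|Q_{t/2}(x,y)|\le C_1G_{t/2}(x-y)$) to pass from $L^p(w)$ to $L^r(w)$ at the cost of $t^{-\frac{|\mfs|}{\ell}(\frac1p-\frac1r)}$, and then conclude from the definition of the norms. The extra bookkeeping you carry out (the substitution $s=t/2$ and the endpoint term $\|Q_1f\|_{L^r(w)}$) is exactly what the paper leaves implicit in ``the result follows from the definition of norms.''
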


\begin{proof}
Since $\|Q_tf\|_{L^r(w)}=\|Q_{t/2}(Q_{t/2}f)\|_{L^r(w)}\lesssim t^{-\frac{|\mfs|}\ell(\frac1p-\frac1r)}\|Q_{t/2}f\|_{L^p(w)}$ by Lemma \ref{lem:weightedyoung}, the result follows from the definition of norms.
\end{proof}

We have the hierarchy between Besov spaces with different parameters $\alpha$.

\begin{prop}\label{prop:abinjection}
Let $\alpha_1<\alpha_2\le0$ and $p\in[1,\infty]$.
The identity $\iota_{\alpha_1}:L_c^p(w)\hookrightarrow B_p^{\alpha_1,Q}(w)$ is uniquely extended to the continuous injection $\iota_{\alpha_1}^{\alpha_2}:B_p^{\alpha_2,Q}(w)\hookrightarrow B_p^{\alpha_1,Q}(w)$.
\end{prop}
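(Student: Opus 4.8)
The plan is to construct $\iota_{\alpha_1}^{\alpha_2}$ by density and then verify injectivity separately, since the definition of $B_p^{\alpha,Q}(w)$ as a completion of $L_c^p(w)$ means we must be slightly careful about what ``inclusion'' means. First I would establish the key norm comparison on the dense subspace: for any $f\in L_c^p(w)$, we have $\|f\|_{B_p^{\alpha_1,Q}(w)}\lesssim\|f\|_{B_p^{\alpha_2,Q}(w)}$. This is immediate from Proposition \ref{prop:almostequiv}\ref{almostequiv2} (or even more directly, since $\alpha_1<\alpha_2\le0$ implies $t^{-\alpha_1/\ell}\le t^{-\alpha_2/\ell}$ for $t\in(0,1]$, so the $B_{p,\infty}^{\alpha_1,Q}(w)$ norm is bounded by the $B_{p,\infty}^{\alpha_2,Q}(w)$ norm with constant $1$). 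Given this bound, the identity map $\iota_{\alpha_1}:L_c^p(w)\to B_p^{\alpha_1,Q}(w)$ is continuous when its domain carries the $B_p^{\alpha_2,Q}(w)$ norm, hence by the universal property of completion it extends uniquely to a continuous linear map $\iota_{\alpha_1}^{\alpha_2}:B_p^{\alpha_2,Q}(w)\to B_p^{\alpha_1,Q}(w)$.

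The substantive part is injectivity of the extension. Let $f\in B_p^{\alpha_2,Q}(w)$ with $\iota_{\alpha_1}^{\alpha_2}f=0$; I must show $f=0$. Choose a sequence $f_n\in L_c^p(w)$ with $f_n\to f$ in $B_p^{\alpha_2,Q}(w)$. The idea is that the operators $Q_t$ ($t>0$) extend continuously to both completions and are compatible with the embedding, so that $Q_tf$ is a well-defined element of $L^p(w)$ computed as the $L^p(w)$-limit of $Q_tf_n$, \emph{independently} of which Besov norm we used to take the limit. Indeed, from the definition of the norms, $\|Q_t g\|_{L^p(w)}\le t^{\alpha_1/\ell}\|g\|_{B_p^{\alpha_1,Q}(w)}$ for $g\in L_c^p(w)$ and $t\in(0,1]$, and $\|Q_1g\|_{L^p(w)}\le\|g\|_{B_p^{\alpha_1,Q}(w)}$, with the analogous bounds for $\alpha_2$; so $g\mapsto Q_t g$ extends to a bounded operator from each Besov completion into $L^p(w)$, and these extensions agree on the common dense subspace $L_c^p(w)$, hence agree on the image. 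Since $\iota_{\alpha_1}^{\alpha_2}f=0$ means $f_n\to 0$ in $B_p^{\alpha_1,Q}(w)$, we get $Q_tf=\lim_n Q_t f_n=0$ in $L^p(w)$ for every $t\in(0,1]$, and likewise $Q_1 f=0$. But the same computation using $f_n\to f$ in $B_p^{\alpha_2,Q}(w)$ shows $Q_tf$ is \emph{also} the limit of $Q_tf_n$ under that topology, so $Q_tf=0$ for all $t$ forces $\|f\|_{B_p^{\alpha_2,Q}(w)}=\lim_n\|f_n\|_{B_p^{\alpha_2,Q}(w)}$ to be computed from a family of elements $Q_tf_n$ converging to $0$; more precisely, since $f$ is by construction the limit in $B_p^{\alpha_2,Q}(w)$ of $f_n$ and the $B_p^{\alpha_2,Q}(w)$ norm is expressed entirely through the $L^p(w)$ norms $\|Q_tf\|_{L^p(w)}$ (which we just showed all vanish), we conclude $\|f\|_{B_p^{\alpha_2,Q}(w)}=0$, i.e., $f=0$ in $B_p^{\alpha_2,Q}(w)$.

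I expect the main obstacle to be making the ``$Q_t$ extends compatibly and the norm is computed through the extended $Q_t$'' argument fully rigorous: one must check that for an abstract element $f$ of the completion $B_p^{\alpha_2,Q}(w)$, the quantity $\|Q_tf\|_{L^p(w)}$ appearing implicitly in the norm really is $\lim_n\|Q_t f_n\|_{L^p(w)}$ for \emph{any} approximating sequence, which is exactly the statement that the linear functional $g\mapsto\|Q_tg\|_{L^p(w)}$ (sublinear, but dominated by the Besov norm) passes to the completion — this is routine but should be stated. The cleanest packaging is: define, for $f$ in either completion, $Q_tf\in L^p(w)$ as the limit of $Q_tf_n$ along any approximating sequence from $L_c^p(w)$ (well-defined by the bounds above and by Lemma \ref{lem:weightedyoung}-type continuity); observe this is consistent across the two spaces and across $\iota_{\alpha_1}^{\alpha_2}$; and note that $f=0$ in $B_p^{\alpha_2,Q}(w)$ if and only if $Q_1f=0$ and $Q_tf=0$ for all $t\in(0,1]$ (nondegeneracy, cf.\ Remark \ref{QtsymmetricS'}). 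Then injectivity of $\iota_{\alpha_1}^{\alpha_2}$ is immediate because $\iota_{\alpha_1}^{\alpha_2}f=0$ yields $Q_tf=0$ for all $t$, hence $f=0$. Everything else is a direct consequence of the universal property of completions.
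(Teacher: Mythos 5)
Your argument is correct and follows essentially the same route as the paper: extend $Q_t$ continuously from $L_c^p(w)$ to both completions, use compatibility of these extensions with $\iota_{\alpha_1}^{\alpha_2}$ on the dense subspace to get $Q_tf=0$ for all $t\in(0,1]$, and conclude $f=0$ from the fact that the $B_p^{\alpha_2,Q}(w)$ norm of an element of the completion is computed through the extended operators $Q_t$ (the paper records this as the identity \eqref{2:eq:besovnormex}). Your additional verification of the norm comparison on $L_c^p(w)$ (needed for the existence of the extension, which the paper leaves implicit) is also fine.
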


\begin{proof}
We prove only the injectivity.
Note that, for any $\alpha\le0$, the operator $Q_t:L_c^p(w)\to L_c^p(w)$ is continuously extended to the operator $Q_t^\alpha:B_p^{\alpha,Q}(w)\to L_c^p(w)$ and it holds that
\begin{align}\label{2:eq:besovnormex}
\|f\|_{B_p^{\alpha,Q}(w)}=\|Q_1^\alpha f\|_{L^p(w)}+\sup_{0<t\le1}t^{-\alpha/\ell}\|Q_t^\alpha f\|_{L^p(w)}
\end{align}
for any $f\in B_p^{\alpha,Q}(w)$.
Let $f\in B_p^{\alpha_2,Q}(w)$ be such that $\iota_{\alpha_1}^{\alpha_2} f=0$ in $B_p^{\alpha_1,Q}(w)$.
Taking a sequence $\{f_n\}\subset L_c^p(w)$ such that $f_n\to f$ in $B_p^{\alpha_2,Q}(w)$, we have $f_n=\iota_{\alpha_1}^{\alpha_2}f_n\to\iota_{\alpha_1}^{\alpha_2}f$ in $B_p^{\alpha_1,Q}(w)$ by the continuity.
By the continuity of $Q_t^{\alpha_i}$ ($i\in\{1,2\}$), we have
$$
Q_t^{\alpha_2}f=\lim_{n\to\infty}Q_tf_n=Q_t^{\alpha_1}(\iota_{\alpha_1}^{\alpha_2}f)=0
$$
in $L^p(w)$ for any $t\in(0,1]$. By the identity \eqref{2:eq:besovnormex}, we have $\|f\|_{B_p^{\alpha_2,Q}(w)}=0$.
\end{proof}

The extensions $\{Q_t^\alpha\}_{0<t\le 1}$ obtained in the above proof are compatible in the sense that $Q_t^{\alpha_1}\circ\iota_{\alpha_1}^{\alpha_2}=Q_t^{\alpha_2}$. Because of this, we can omit the letter $\alpha$ and use the notation $Q_t$ to mean its extension $Q_t^\alpha$ regardless of its domain.
We close this section with the continuity of $Q_t$ with respect to $t$ in Besov norms.

\begin{lem}\label{lem:timecontinuity}
Let $\alpha\le0$ and $p\in[1,\infty]$.
There exists a constant $C>0$ such that, for any $f\in B_p^{\alpha,Q}(w)$, $t\in(0,1]$, and $\varepsilon\in[0,\ell]$, we have
$$
\|(Q_t-\id)f\|_{B_p^{\alpha-\varepsilon,Q}(w)}
\le C\, t^{\varepsilon/\ell}\|f\|_{B_p^{\alpha,Q}(w)}.
$$
\end{lem}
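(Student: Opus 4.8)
The plan is to estimate $\|(Q_t-\id)f\|_{B_p^{\alpha-\varepsilon,Q}(w)}$ directly from the definition, i.e.\ to control $\|Q_1(Q_t-\id)f\|_{L^p(w)}$ and $\sup_{0<s\le1}s^{-(\alpha-\varepsilon)/\ell}\|Q_s(Q_t-\id)f\|_{L^p(w)}$. The key observation is that $Q_s(Q_t-\id)f=Q_{s+t}f-Q_sf=\int_s^{s+t}\partial_rQ_rf\,dr$, so by Definition \ref{asmp1}-\ref{asmp2:derivative} together with the semigroup property (writing $\partial_rQ_r=(\partial_rQ)_{r/2}Q_{r/2}$) and Lemma \ref{lem:weightedyoung}, one gets
\begin{align*}
\|Q_s(Q_t-\id)f\|_{L^p(w)}\lesssim\int_s^{s+t}\|Q_{r/2}f\|_{L^p(w)}\frac{dr}{r}\lesssim\int_s^{s+t}r^{\alpha/\ell}\frac{dr}{r}\cdot\|f\|_{B_p^{\alpha,Q}(w)},
\end{align*}
using $\|Q_{r/2}f\|_{L^p(w)}\lesssim r^{\alpha/\ell}\|f\|_{B_p^{\alpha,Q}(w)}$ for $r\le 2$ (which follows from the definition of the Besov norm, with the endpoint $r>1$ handled by $\|Q_{r/2}f\|_{L^p(w)}\le\|Q_1f\|_{L^p(w)}\lesssim r^{\alpha/\ell}\|f\|$ since $r^{\alpha/\ell}\gtrsim1$ there as $\alpha\le0$).

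Next I would bound the remaining integral $\int_s^{s+t}r^{\alpha/\ell-1}\,dr$ and compare it with $s^{(\alpha-\varepsilon)/\ell}$. For the regime $s\ge t$: since $\alpha\le0$ the integrand is nonincreasing, so $\int_s^{s+t}r^{\alpha/\ell-1}dr\le t\,s^{\alpha/\ell-1}\le t^{\varepsilon/\ell}s^{1-\varepsilon/\ell}s^{\alpha/\ell-1}=t^{\varepsilon/\ell}s^{(\alpha-\varepsilon)/\ell}$, where I used $t\le s$ to trade $t^{1-\varepsilon/\ell}$ for $s^{1-\varepsilon/\ell}$ (valid because $\varepsilon\le\ell$, so the exponent $1-\varepsilon/\ell\ge0$). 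For the regime $s<t$: enlarge the domain to $(0,s+t)\subset(0,2t)$ and, when $\alpha<0$, integrate explicitly to get $\int_0^{2t}r^{\alpha/\ell-1}dr\lesssim t^{\alpha/\ell}\le t^{\varepsilon/\ell}t^{(\alpha-\varepsilon)/\ell}\le t^{\varepsilon/\ell}s^{(\alpha-\varepsilon)/\ell}$, the last step because $\alpha-\varepsilon\le0$ and $s<t$. Multiplying through by $s^{-(\alpha-\varepsilon)/\ell}$ in both cases gives the bound $\lesssim t^{\varepsilon/\ell}\|f\|_{B_p^{\alpha,Q}(w)}$, uniformly in $s\in(0,1]$; taking the supremum handles the second term of the target norm. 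The $Q_1$ term is the $s=1$ case of the same estimate (with $t\le1$), so it is already covered.

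The one genuinely delicate point is the case $\alpha=0$, where $\int_0^{2t}r^{-1}dr$ diverges, so the crude enlargement of the integration domain fails in the regime $s<t$. Here I would instead keep $\int_s^{s+t}r^{-1}dr=\log(1+t/s)$ and note that $\log(1+t/s)\le(t/s)^{\varepsilon/\ell}\cdot C_\varepsilon$ for $0<s<t$ (since $\log(1+u)\lesssim u^{\varepsilon/\ell}$ for $u\ge1$ when $\varepsilon>0$; and for $\varepsilon=0$ the statement is trivial as $Q_t-\id$ is bounded on $B_p^{\alpha,Q}(w)$ uniformly by Lemma \ref{lem:weightedyoung}), which again yields $s^{-(\alpha-\varepsilon)/\ell}\log(1+t/s)=s^{\varepsilon/\ell}\log(1+t/s)\lesssim t^{\varepsilon/\ell}$. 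After disposing of this borderline case, the argument is a routine splitting into $s\gtrless t$ and bookkeeping of exponents; no new ideas beyond the time-derivative bound, the semigroup identity, and Lemma \ref{lem:weightedyoung} are needed. I expect the exponent bookkeeping (making sure every inequality uses $\alpha\le0$, $\alpha-\varepsilon\le0$, and $0\le\varepsilon\le\ell$ in the right direction) to be the only place one can slip, so I would write those comparisons out carefully.
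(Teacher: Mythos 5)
Your proposal is correct and follows essentially the same route as the paper: the identity $Q_s(Q_t-\id)f=\int_s^{s+t}\partial_rQ_rf\,dr$, the time-derivative bound used in the form $(\partial_rQ)_{r/2}Q_{r/2}$ together with Lemma \ref{lem:weightedyoung}, and the resulting integral $\int_s^{s+t}r^{\alpha/\ell-1}dr$ are exactly the paper's steps, the only difference being that you close the estimate by splitting into $s\ge t$ and $s<t$ (with a logarithm at the borderline $\alpha=0$) where the paper interpolates between the two bounds $ts^{\alpha/\ell-1}$ and $s^{\alpha/\ell}$ --- your explicit handling of $\alpha=0$ is, if anything, slightly more careful. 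One harmless slip: the parenthetical claim $\|Q_{r/2}f\|_{L^p(w)}\le\|Q_1f\|_{L^p(w)}$ for $r\in(1,2]$ is not justified (writing $Q_1=Q_{1-r/2}Q_{r/2}$ and using Lemma \ref{lem:weightedyoung} gives rather the reverse inequality up to a constant), but it is also unnecessary, since $r/2\le1$ throughout and the definition of the Besov norm already yields $\|Q_{r/2}f\|_{L^p(w)}\le(r/2)^{\alpha/\ell}\|f\|_{B_p^{\alpha,Q}(w)}$.
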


\begin{proof}
Similarly to the proof of Proposition \ref{prop:almostequiv}, we have for any $s,t\in(0,1]$,
\begin{align*}
\|Q_s(Q_t-\id)f\|_{L^p(w)}
&=\|(Q_{t+s}-Q_s)f\|_{L^p(w)}\le\int_s^{t+s}\|\partial_rQ_rf\|_{L^p(w)}dr\\
&\lesssim\int_s^{t+s}\|Q_{r/2}f\|_{L^p(w)}\frac{dr}r
\le\|f\|_{B_p^{\alpha,Q}(w)}\int_s^{t+s}r^{\alpha/\ell-1}dr.
\end{align*}
Since $\int_s^{t+s}r^{\alpha/\ell-1}dr\lesssim (ts^{\alpha/\ell-1})\wedge s^{\alpha/\ell}$, we have the result by an interpolation.
\end{proof}

\section{Basic notions of regularity-integrability structures}\label{section:RI}

In this section, we extend the original definitions of regularity structures, models, and modelled distributions in \cite{Hai14} by taking integrability exponents into account.

\subsection{Regularity-integrability structures}

While the label set of the regularity structure is a set of real numbers, our label set is a subset of $\bbR\times[1,\infty]$.
We denote generic elements of $\bbR\times[1,\infty]$ by bold letters $\bfa,\bfb,\bfc$, and so on. For each element $\bfa=(\alpha,p)\in\bbR\times[1,\infty]$, we write $\alpha=r(\bfa)$ and $p=i(\bfa)$, where the letters ``$r$" and ``$i$" mean ``regularity" and ``integrability", respectively.
We define a partial order $\preceq$ and a strict partial order $\prec$ of the set $\bbR\times[1,\infty]$ by
\begin{align*}
\bfb\preceq\bfa&\quad\overset{\text{def}}{\Leftrightarrow}\quad r(\bfb)\le r(\bfa),\ i(\bfb)\ge i(\bfa),\\
\bfb\prec\bfa&\quad\overset{\text{def}}{\Leftrightarrow}\quad r(\bfb)<r(\bfa),\ i(\bfb)\ge i(\bfa).
\end{align*}
Note that $i(\bfb)$ may be equal to $i(\bfa)$ even for the latter case. For any $\bfb\preceq\bfa$, we define the element $\bfaob\in\bbR\times[1,\infty]$ by
$$
\bfaob:=\Bigg(r(\bfa)-r(\bfb),\frac1{\frac1{i(\bfa)}-\frac1{i(\bfb)}}\Bigg),
$$
where $1/\infty:=0$ and $1/0:=\infty$. In what follows, the relations $r(\bfa)=r(\bfa\ominus\bfb)+r(\bfb)$ and $1/{i(\bfa)}=1/{i(\bfaob)}+1/{i(\bfb)}$ are important.

\begin{defi}
A \emph{regularity-integrability structure} $\scT=(\bfA,\bfT,\bfG)$ consists of the following objects.
\begin{itemize}
\item[(1)]
(Index set) $\bfA$ is a subset of $\bbR\times[1,\infty]$ such that, for every $\bfa\in\bbR\times[1,\infty]$, the subset $\{\bfb\in \bfA\, ;\, \bfb\prec\bfa\}$ is finite.
\item[(2)]
(Model space) $\bfT=\bigoplus_{{\bf a}\in \bfA}\bfT_{\bf a}$ is an algebraic sum of Banach spaces $(\bfT_{\bf a},\|\cdot\|_{\bf a})$.
\item[(3)]
(Structure group) $\bfG$ is a group of continuous linear operators on $\bfT$ such that, for any $\Gamma\in \bfG$ and $\bfa\in\bfA$,
$$
(\Gamma-\id)\bfT_{\bf a}\subset\bigoplus_{{\bf b}\in \bfA,\, {\bf b}\prec {\bf a}}\bfT_{\bf b}.
$$
\end{itemize}
A \emph{regularity} of $\scT$ is $\alpha_0\in\bbR$ such that $(\alpha_0,\infty)\preceq\bfa$ for any $\bfa\in \bfA$.
For any $\bfa\in\bfA$, we denote by $P_{\bf a}:\bfT\to\bfT_{\bf a}$ a canonical projection and write
$$
\|\tau\|_{\bf a}:=\|P_{\bf a}\tau\|_{\bf a},\qquad\tau\in \bfT
$$
by abuse of notation.
\end{defi}

Obviously, the regularity structure is a particular case such that $\bfA\subset\bbR\times\{\infty\}$.

\subsection{Models}

We define the space of Besov type models on the fixed regularity-integrability structure $\scT=(\bfA,\bfT,\bfG)$.
For any measurable functions $f$ on $\bbR^d$ taking values in a Banach space $(X,\|\cdot\|_X)$, we use the notation
$$
\|f\|_{L^p(w;X)}:=\big\|\|f(x)\|_X\big\|_{L_x^p(w)}
$$
for simplicity. For two Banach spaces $X$ and $Y$, we denote by $\mcL(X,Y)$ the Banach space of all continuous linear operators $X\to Y$.

\begin{defi}
Let $w$ be a $G$-controlled weight.
A \emph{smooth model} $M=(\Pi,\Gamma)$ is a pair of two families of continuous linear operators $\Pi=\{\Pi_x:\bfT\to C(\bbR^d)\}_{x\in\bbR^d}$ and $\Gamma=\{\Gamma_{xy}\}_{x,y\in\bbR^d}\subset \bfG$ with the following properties.
\begin{itemize}
\item[(1)]
(Algebraic conditions)
$\Pi_x\Gamma_{xy}=\Pi_y$, $\Gamma_{xx}=\id$, and $\Gamma_{xy}\Gamma_{yz}=\Gamma_{xz}$ for any $x,y,z\in\bbR^d$.
\item[(2)]
(Analytic conditions)
For any $\bfc\in\bbR\times[1,\infty]$,
\begin{align*}
\|\Pi\|_{\bfc,w}&:=\max_{\bfa\in\bfA,\, \bfa\prec\bfc}\,
\sup_{0<t\le1}
\Big(t^{-r(\bfa)/\ell}\big\|Q_t\big(x,\Pi_x(\cdot)\big)\big\|_{L_x^{i(\bfa)}(w;\bfT_{\bf a}^*)}\Big)
\\
&=\max_{\bfa\in\bfA,\, \bfa\prec\bfc}\,
\sup_{0<t\le1}
\Bigg(t^{-r(\bfa)/\ell}
\Bigg\|
\sup_{\tau\in \bfT_{\bf a}\setminus\{0\}}
\frac{|Q_t(x,\Pi_x\tau)|}{\|\tau\|_{\bf a}}
\Bigg\|_{L_x^{i(\bfa)}(w)}\Bigg)<\infty
\end{align*}
and
\begin{align*}
&\|\Gamma\|_{\bfc,w}:=\max_{\substack{\bfa,\bfb\in\bfA\\ \bfb\prec\bfa\prec\bfc}}\,
\sup_{h\in\bbR^d\setminus\{0\}}
\frac{\|\Gamma_{(x+h)x}\|_{L_x^{i(\bfaob)}(w;\mcL(\bfT_{\bf a},\bfT_{\bf b}))}}{\wco(h)\|h\|_\mfs^{r(\bfaob)}}\\
&=\max_{\substack{\bfa,\bfb\in\bfA\\ \bfb\prec\bfa\prec\bfc}}\,
\sup_{h\in\bbR^d\setminus\{0\}}\Bigg(
\frac{1}{\wco(h)\|h\|_\mfs^{r(\bfaob)}}
\Bigg\|
\sup_{\tau\in \bfT_{\bf a}\setminus\{0\}}
\frac{\|\Gamma_{(x+h)x}\tau\|_{\bf b}}{\|\tau\|_{\bf a}}
\Bigg\|_{L_x^{i(\bfaob)}(w)}\Bigg)<\infty.
\end{align*}
\end{itemize}
We write $\tri M\tri_{\bfc,w}:=\|\Pi\|_{\bfc,w}+\|\Gamma\|_{\bfc,w}$.
In addition, for any two smooth models $M^{(i)}=(\Pi^{(i)},\Gamma^{(i)})$ with $i\in\{1,2\}$, we define the pseudo-metrics
$$
\tri M^{(1)};M^{(2)}\tri_{\bfc,w}:=\|\Pi^{(1)}-\Pi^{(2)}\|_{\bfc,w}+\|\Gamma^{(1)}-\Gamma^{(2)}\|_{\bfc,w}
$$
by replacing $\Pi$ and $\Gamma$ above with $\Pi^{(1)}-\Pi^{(2)}$ and $\Gamma^{(1)}-\Gamma^{(2)}$ respectively.
Finally, we define the space $\scM_w(\scT)$ as the completion of the set of all smooth models, under the pseudo-metrics $\tri\cdot;\cdot\tri_{\bfc,w}$ for all $\bfc\in\bbR\times[1,\infty]$.
We call each element of $\scM_w(\scT)$ a \emph{model} for $\scT$. We still use the notation $M=(\Pi,\Gamma)$ to denote a generic model.
\end{defi}

Recall that $i(\bfaob)=\infty$ if $\bfa,\bfb\in\bbR\times\{\infty\}$. Therefore, in the regularity structure case $\bfA\subset\bbR\times\{\infty\}$, the above definition coincides with the original definition of models \cite[Definition 2.17]{Hai14} if we ignore the difference between local and global bounds.

\medskip

It is a subtle question in which space the operator ``$\Pi_x$" takes values for general $M=(\Pi,\Gamma)\in\scM_w(\scT)$.
Under some additional assumptions on weights, we can regard $\Pi_x$ as a continuous linear operator from $\bfT$ to a Besov space.

\begin{prop}\label{prop:whatisPixtau}
Let $\alpha_0\le0$ be a regularity of $\scT$.
Assume that there exist two $G$-controlled weights $w_1$ and $w_2$ such that
$$
\sup_{x\in\bbR^d}\big\{\|x\|_\mfs^n\,\wco(x)w_1(x)\big\}
+\sup_{x\in\bbR^d}\big\{\|x\|_\mfs^n\,\wco_1(x)w_2(x)\big\}
<\infty
$$
for any $n\ge0$,
and that $ww_1$ and $ww_2$ are also $G$-controlled. Then for almost every $x\in\bbR^d$, the map $\Pi_x$ is well-defined as a continuous linear operator from $\bfT_{\bfa}$ to $B_{i(\bfa),1}^{\alpha,Q}(ww_1)$ for any $\bfa\in\bfA$ and any $\alpha<\alpha_0$. More precisely, for any $\bfc\in\bbR\times[1,\infty]$ such that $\bfa\prec\bfc$ we have
$$
\|\Pi_x\|_{L_x^{i(\bfa)}\big(ww_2;\mcL\big(\bfT_\bfa,B_{i(\bfa),1}^{\alpha,Q}(ww_1)\big)\big)}
\lesssim\|\Pi\|_{\bfc,w}(1+\|\Gamma\|_{\bfc,w}).
$$
\end{prop}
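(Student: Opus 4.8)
The plan is to prove the quantitative estimate first for \emph{smooth} models — where $\Pi_x\tau$ is a genuine continuous function on $\bbR^d$ for every $x$ and $\tau$ — and then to extend it to arbitrary $M\in\scM_w(\scT)$ by density. Fix $\bfa\in\bfA$, $\bfc\in\bbR\times[1,\infty]$ with $\bfa\prec\bfc$, $\alpha<\alpha_0$, and write $p:=i(\bfa)$; the hypotheses on $w_1$ and $w_2$ guarantee that every space and integral appearing below is well defined and finite (in particular $B_{i(\bfa),1}^{\alpha,Q}(ww_1)$ makes sense because $ww_1$ is $G$-controlled). For $\tau\in\bfT_\bfa$ and $x,y\in\bbR^d$, the algebraic relation $\Pi_x=\Pi_y\Gamma_{yx}$ together with $P_\bfa\Gamma_{yx}\tau=\tau$ and $(\Gamma_{yx}-\id)\bfT_\bfa\subset\bigoplus_{\bfb\prec\bfa}\bfT_\bfb$ gives the finite decomposition
$$
\Pi_x\tau=\Pi_y\tau+\sum_{\bfb\in\bfA,\,\bfb\prec\bfa}\Pi_y\big(P_\bfb(\Gamma_{yx}-\id)\tau\big).
$$
For each $\bfb\in\bfA$ with $\bfb\preceq\bfa$ (so in particular $\bfb\prec\bfc$), set $g_\bfb(t,y):=\|Q_t(y,\Pi_y(\cdot))\|_{\bfT_\bfb^*}$, so that $|Q_t(y,\Pi_y\rho)|\le g_\bfb(t,y)\|\rho\|_\bfb$ for $\rho\in\bfT_\bfb$ and $\|g_\bfb(t,\cdot)\|_{L^{i(\bfb)}(w)}\le t^{r(\bfb)/\ell}\|\Pi\|_{\bfc,w}$. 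Combined with $\|P_\bfb(\Gamma_{yx}-\id)\tau\|_\bfb\le\|\Gamma_{(x+h)x}\|_{\mcL(\bfT_\bfa,\bfT_\bfb)}\|\tau\|_\bfa$, where $h:=y-x$, this reduces the $B_{p,1}^{\alpha,Q}(ww_1)$-norm of $\Pi_x\tau$, and then its $L_x^p(ww_2)$-norm, to two types of contributions.

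\textbf{The diagonal term $\Pi_y\tau$ is harmless.} Since $ww_1\le w$, its $L_y^p(ww_1)$-norm is at most $\|g_\bfa(t,\cdot)\|_{L^p(w)}\|\tau\|_\bfa\le t^{r(\bfa)/\ell}\|\Pi\|_{\bfc,w}\|\tau\|_\bfa$, which does not depend on $x$; the integral $\int_0^1 t^{(r(\bfa)-\alpha)/\ell}\,t^{-1}dt$ converges because $\alpha<\alpha_0\le r(\bfa)$, and the outer $L_x^p(ww_2)$-norm of a constant only multiplies it by $\|ww_2\|_{L^p(\bbR^d)}$, which is finite because $w_2$, hence $ww_2$, decays faster than any polynomial. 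The $Q_1$-part of the Besov norm is treated in the same way with $t=1$.

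\textbf{The core is the off-diagonal contribution.} Taking the $L_y^p(ww_1)$-norm, then the $L_x^p(ww_2)$-norm, and substituting $y=x+h$ (for $p<\infty$; the case $p=\infty$ is identical with suprema replacing integrals), the $p$-th power of a term with $\bfb\prec\bfa$ is at most
$$
\|\tau\|_\bfa^p\int_{\bbR^d}\!\!\int_{\bbR^d}g_\bfb(t,x+h)^p\,\|\Gamma_{(x+h)x}\|_{\mcL(\bfT_\bfa,\bfT_\bfb)}^p\,(ww_1)(x+h)^p\,(ww_2)(x)^p\,dx\,dh.
$$
The genuine obstacle here is that $\|\Gamma\|_{\bfc,w}$ controls $\|\Gamma_{(x+h)x}\|_{\mcL(\bfT_\bfa,\bfT_\bfb)}$ only \emph{after} integrating over the base point $x$, whereas a Besov norm of $\Pi_x\tau$ integrates $Q_t(\,\cdot\,,\Pi_x\tau)$ over a \emph{different} point. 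The resolution is to have already integrated over $x$, to split $(ww_1)(x+h)\le\wco_1(x)\,w(x+h)\,w_1(h)$ by moderateness of $w_1$, and to absorb $\wco_1(x)w_2(x)\le\|\wco_1 w_2\|_\infty<\infty$ (the hypothesis with $n=0$) into the constant. For each fixed $h$, H\"older's inequality in $x$ with the conjugate exponents $q_1/p$ and $q_2/p$ — where $q_1:=i(\bfb)$ and $q_2:=i(\bfaob)$, whence $1/p=1/q_1+1/q_2$ and $q_1/p,q_2/p\ge1$ — bounds the remaining $x$-integral by
$$
\|g_\bfb(t,\cdot)\|_{L^{q_1}(w)}^p\;\big\|\Gamma_{(x+h)x}\big\|_{L_x^{q_2}(w;\mcL(\bfT_\bfa,\bfT_\bfb))}^p\le\big(t^{r(\bfb)/\ell}\|\Pi\|_{\bfc,w}\big)^p\big(\|\Gamma\|_{\bfc,w}\,\wco(h)\,\|h\|_\mfs^{r(\bfaob)}\big)^p,
$$
using translation invariance of Lebesgue measure for the first factor and the definition of $\|\Gamma\|_{\bfc,w}$ (valid as $\bfb\prec\bfa\prec\bfc$) for the second. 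Integrating against $w_1(h)^p\,dh$ leaves $\int_{\bbR^d}w_1(h)^p\wco(h)^p\|h\|_\mfs^{p\,r(\bfaob)}\,dh$, which is finite because $\wco(h)w_1(h)$ decays faster than any polynomial and $r(\bfaob)=r(\bfa)-r(\bfb)>0$. Hence each off-diagonal term is bounded by $C\,t^{r(\bfb)/\ell}\|\Pi\|_{\bfc,w}\|\Gamma\|_{\bfc,w}\|\tau\|_\bfa$.

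\textbf{Conclusion.} Since $\alpha<\alpha_0\le r(\bfb)$ for every $\bfb\in\bfA$, the integrals $\int_0^1 t^{(r(\bfb)-\alpha)/\ell}\,t^{-1}dt$ converge; summing over the finitely many $\bfb\prec\bfa$, adding the diagonal and $Q_1$ contributions and dividing by $\|\tau\|_\bfa$ yields the asserted inequality for smooth models. Running the same estimate on $\Pi_x^{(1)}\tau-\Pi_x^{(2)}\tau=(\Pi_y^{(1)}-\Pi_y^{(2)})\Gamma_{yx}^{(1)}\tau+\Pi_y^{(2)}(\Gamma_{yx}^{(1)}-\Gamma_{yx}^{(2)})\tau$ shows that the assignment $M\mapsto\big(x\mapsto\Pi_x|_{\bfT_\bfa}\big)$ is Lipschitz from the smooth models, with the pseudometrics $\tri\cdot;\cdot\tri_{\bfc,w}$, into the Banach space $L_x^p\big(ww_2;\mcL(\bfT_\bfa,B_{i(\bfa),1}^{\alpha,Q}(ww_1))\big)$, hence extends continuously to $\scM_w(\scT)$ with the bound preserved; choosing an almost-everywhere convergent subsequence then defines $\Pi_x$, for almost every $x$, as the stated operator. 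I expect the weight bookkeeping in the off-diagonal step — choosing the split $(ww_1)(x+h)\le\wco_1(x)w(x+h)w_1(h)$ and matching the H\"older exponents to $1/i(\bfa)=1/i(\bfb)+1/i(\bfaob)$ — to be the only genuinely delicate point; everything else is a routine interpolation in $t$.
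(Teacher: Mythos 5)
Your proposal is correct and follows essentially the same route as the paper: reduce to smooth models by density, expand $\Pi_x=\Pi_y\Gamma_{yx}$ over the levels $\bfb\preceq\bfa$, split the exponents via $1/i(\bfa)=1/i(\bfb)+1/i(\bfaob)$ and the weights via moderateness of $w_1$ and the rapid decay of $\wco w_1$ and $\wco_1 w_2$, bound the two factors by $\|\Pi\|_{\bfc,w}$ and $\|\Gamma\|_{\bfc,w}$, and integrate in $t$ using $\alpha<\alpha_0\le r(\bfb)$. The only differences are organizational — you substitute $y=x+h$ and apply a single H\"older in $x$ for fixed $h$ where the paper applies H\"older in $y$, the translation lemma, H\"older in $x$ and Fubini, and you spell out the density-extension step that the paper leaves implicit.
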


\begin{proof}
By the density argument, it is sufficient to show the inequality for smooth models.
By the algebraic relations and Lemmas \ref{lem:weightedholder} and \ref{lem:weightedtranslation},
\begin{align*}
\big\|Q_t\big(y,\Pi_x(\cdot)\big)\big\|_{\mcL(\bfT_\bfa,L_y^{i(\bfa)}(ww_1))}
&=\sup_{\tau\in\bfT_\bfa\setminus\{0\}}\frac{\|Q_t(y,\Pi_y\Gamma_{yx}\tau)\|_{L_y^{i(\bfa)}(ww_1)}}{\|\tau\|_\bfa}\\
&\le\sum_{\bfb\preceq \bfa}\Bigg\|\big\|Q_t\big(y,\Pi_y(\cdot)\big)\big\|_{\bfT_{\bfb}^*}
\sup_{\tau\in\bfT_\bfa\setminus\{0\}}\frac{\|\Gamma_{yx}\tau\|_{\bfb}}{\|\tau\|_\bfa}\Bigg\|_{L_y^{i(\bfa)}(ww_1)}\\
&\le \sum_{\bfb\preceq\bfa}\big\|Q_t\big(y,\Pi_y(\cdot)\big)\big\|_{L_y^{i(\bfb)}(w;\bfT_\bfb^*)}
\big\|\|\Gamma_{yx}\|_{\mcL(\bfT_\bfa,\bfT_\bfb)}\big\|_{L_y^{i(\bfa\ominus\bfb)}(w_1)}\\
&\le \|\Pi\|_{\bfc,w}\sum_{\bfb\preceq\bfa}t^{r(\bfb)/\ell}\wco_1(x)
\big\|\|\Gamma_{(x+y)x}\|_{\mcL(\bfT_\bfa,\bfT_\bfb)}\big\|_{L_y^{i(\bfa\ominus\bfb)}(w_1)}.
\end{align*}
By H\"older's inequality and Fubini's theorem, we have
\begin{align*}
&\Big\|\big\|Q_t\big(y,\Pi_x(\cdot)\big)\big\|_{\mcL(\bfT_\bfa,L_y^{i(\bfa)}(ww_1))}\Big\|_{L_x^{i(\bfa)}(ww_2)}\\
&\le \|\Pi\|_{\bfc,w}\sum_{\bfb\preceq\bfa}t^{r(\bfb)/\ell}\|\wco_1\|_{L^{i(\bfb)}(w_2)}
\Big\|\big\|\|\Gamma_{(x+y)x}\|_{\mcL(\bfT_\bfa,\bfT_\bfb)}\big\|_{L_y^{i(\bfaob)}(w_1)}\Big\|_{L_x^{i(\bfaob)}(w)}\\
&=\|\Pi\|_{\bfc,w}\sum_{\bfb\preceq\bfa}t^{r(\bfb)/\ell}\|\wco_1\|_{L^{i(\bfb)}(w_2)}
\Big\|\big\|\|\Gamma_{(x+y)x}\|_{\mcL(\bfT_\bfa,\bfT_\bfb)}\big\|_{L_x^{i(\bfaob)}(w)}\Big\|_{L_y^{i(\bfaob)}(w_1)}\\
&\lesssim\|\Pi\|_{\bfc,w}(1+\|\Gamma\|_{\bfc,w})\sum_{\bfb\preceq\bfa}t^{r(\bfb)/\ell}\big\|1+\wco(y)\|y\|_\mfs^{r(\bfaob)}\big\|_{L_y^{i(\bfaob)}(w_1)}\\
&\lesssim\|\Pi\|_{\bfc,w}(1+\|\Gamma\|_{\bfc,w})\, t^{\alpha_0/\ell}
\end{align*}
for $t\in(0,1]$. Note that $w^*\ge1$ and $\|1\|_{L^p(w_1)}<\infty$ for any $p\in[1,\infty]$ by an assumption on $w_1$.
Finally, by the definition of Besov norms,
\begin{align*}
&\Big\|\|\Pi_x\|_{\mcL\big(\bfT_\bfa,B_{i(\bfa),1}^{\alpha,Q}(ww_1)\big)}\Big\|_{L_x^{i(\bfa)}(ww_2)}\\
&\le\bigg\|
\big\|Q_1\big(y,\Pi_x(\cdot)\big)\big\|_{\mcL(\bfT_\bfa,L_y^{i(\bfa)}(ww_1))}
+\int_0^1t^{-\alpha/\ell}\big\|Q_t\big(y,\Pi_x(\cdot)\big)\big\|_{\mcL(\bfT_\bfa,L_y^{i(\bfa)}(ww_1))}\frac{dt}t
\bigg\|_{L_x^{i(\bfa)}(ww_2)}\\
&\lesssim\|\Pi\|_{\bfc,w}(1+\|\Gamma\|_{\bfc,w})\bigg(1+\int_0^1t^{(\alpha_0-\alpha)/\ell}\frac{dt}t\bigg)
\lesssim\|\Pi\|_{\bfc,w}(1+\|\Gamma\|_{\bfc,w}).
\end{align*}
\end{proof}

\begin{rem}\label{rem:whatisknownforZ}
Without additional weights as above, we only know that $Q_t\big(x-h,\Pi_x(\cdot)\big)$ is defined for any $h\in\bbR^d$ as an element of $L_x^{i(\bfa)}(w^2;\bfT_\bfa^*)$. Indeed, for any smooth model and for any $\bfa\prec\bfc$, by Lemmas \ref{lem:weightedholder} and \ref{lem:weightedtranslation} we have
\begin{align}\label{eq:Q(x-h,Pix)}
\begin{aligned}
\big\|Q_t\big(x-h,\Pi_x(\cdot)&\big)\big\|_{L_x^{i(\bfa)}(w^2;\bfT_\bfa^*)}
=\big\|Q_t\big(x-h,\Pi_{x-h}\Gamma_{(x-h)x}(\cdot)\big)\big\|_{L_x^{i(\bfa)}(w^2;\bfT_\bfa^*)}\\
&\le\sum_{\bfb\preceq\bfa}\Big\|\big\|Q_t\big(x-h,\Pi_{x-h}(\cdot)\big)\big\|_{\bfT_\bfb^*}\|\Gamma_{(x-h)x}\|_{\mcL(\bfT_\bfa,\bfT_\bfb)}\Big\|_{L_x^{i(\bfa)}(w^2)}\\
&\le \wco(h)\sum_{\bfb\preceq\bfa}\big\|Q_t\big(x,\Pi_x(\cdot)\big)\big\|_{L_x^{i(\bfb)}(w;\bfT_\bfb^*)}\|\Gamma_{(x-h)x}\|_{L_x^{i(\bfaob)}(w;\mcL(\bfT_\bfa,\bfT_\bfb))}\\
&\le \big(\wco(h)\big)^2\|\Pi\|_{\bfc,w}(1+\|\Gamma\|_{\bfc,w})\sum_{\bfb\preceq\bfa}t^{r(\bfb)/\ell}\|h\|_\mfs^{r(\bfaob)}<\infty.
\end{aligned}
\end{align}
Moreover, by the density argument we also have the semigroup property
$$
Q_t(x,\Pi_x\tau)=\int_{\bbR^d}Q_{t-s}(x,x-h)Q_s(x-h,\Pi_x\tau)dh,\qquad 0<s<t
$$
for any models. These properties are used to prove the reconstruction theorem.
\end{rem}

\subsection{Modelled distributions}

We close this section with the definition of Besov type modelled distributions and their reconstructions.
We fix two $G$-controlled weights $w$ and $v$ such that $wv$ is also $G$-controlled.

\begin{defi}\label{def:besovMD}
Let $M=(\Pi,\Gamma)\in\scM_{w}(\scT)$.
For any $\bfc\in\bbR\times[1,\infty]$, we define $\mcD_{v}^\bfc(\Gamma)$ as the space of all functions $f:\bbR^d\to \bfT_{\prec \bfc}:=\bigoplus_{\bfa\in \bfA,\, \bfa\prec \bfc}\bfT_\bfa$ such that
\begin{align*}
\lp f\rp_{\bfc,v}&:=\max_{\bfa\prec \bfc}\|f\|_{L^{i(\bfcoa)}(v;\bfT_\bfa)}<\infty,\\
\| f\|_{\bfc,v}^\Gamma&:=\max_{\bfa\prec\bfc}
\sup_{h\in\bbR^d\setminus\{0\}}
\frac{\|\Delta_{x;h}^\Gamma f\|_{L_x^{i(\bfcoa)}(v;\bfT_\bfa)}}{\vco(h)\|h\|_\mfs^{r(\bfcoa)}}<\infty,
\end{align*}
where $\Delta_{x;h}^\Gamma f:=f(x-h)-\Gamma_{(x-h)x}f(x)$.
We write $\tri f\tri_{\bfc,v}^\Gamma:=\lp f\rp_{\bfc,v}+\|f\|_{\bfc,v}^\Gamma$.
We call each element of $\mcD_v^{\bfc}(\Gamma)$ a \emph{modelled distribution}.

In addition, for any two models $M^{(i)}=(\Pi^{(i)},\Gamma^{(i)})\in\scM_{w}(\scT)$ and modelled distributions $f^{(i)}\in\mcD_{v}^\bfc(\Gamma^{(i)})$ with $i\in\{1,2\}$, we define $\tri f^{(1)};f^{(2)}\tri_{\bfc,v}^{\Gamma^{(1)};\Gamma^{(2)}}:=\lp f^{(1)}-f^{(2)}\rp_{\bfc,v}+\|f^{(1)};f^{(2)}\|_{\bfc,v}^{\Gamma^{(1)};\Gamma^{(2)}}$ by
\begin{align*}
\lp f^{(1)}-f^{(2)}\rp_{\bfc,v}&:=\max_{\bfa\prec \bfc}\|f^{(1)}-f^{(2)}\|_{L^{i(\bfcoa)}(v;\bfT_\bfa)},\\
\| f^{(1)};f^{(2)}\|_{\bfc,v}^{\Gamma^{(1)};\Gamma^{(2)}}&:=\max_{\bfa\prec \bfc}
\sup_{h\in\bbR^d\setminus\{0\}}
\frac{\|\Delta_{x;h}^{\Gamma^{(1)}} f^{(1)}-\Delta_{x;h}^{\Gamma^{(2)}}f^{(2)}\|_{L_x^{i(\bfcoa)}(v;\bfT_\bfa)}}{\vco(h)\|h\|_\mfs^{r(\bfcoa)}}.
\end{align*}
We omit the symbol ``$\,\Gamma^{(1)};\Gamma^{(2)}$" below for simplicity.
\end{defi}

\begin{defi}\label{def:besovreconst}
Let $\alpha_0\le0$ be a regularity of $\scT$ and let $\bfc\in\bbR\times[1,\infty]$.
For any $M=(\Pi,\Gamma)\in\scM_{w}(\scT)$ and $f\in\mcD_{v}^\bfc(\Gamma)$, we call any $\Lambda\in B_{i(\bfc)}^{\alpha_0,Q}(wv)$ satisfying
\begin{align*}
\lb\Lambda\rb_{\bfc,wv}^{\Pi,f}:=\sup_{0<t\le1}t^{-r(\bfc)/\ell}\big\|Q_t(x,\Lambda)-Q_t\big(x,\Pi_xf(x)\big)\big\|_{L_x^{i(\bfc)}(wv)}<\infty
\end{align*}
a \emph{reconstruction} of $f$ for $M$.
Furthermore, for any models $M^{(i)}=(\Pi^{(i)},\Gamma^{(i)})\in\scM_{w}(\scT)$, modelled distributions $f^{(i)}\in\mcD_{v}^e(\Gamma^{(i)})$, and any reconstructions $\Lambda^{(i)}\in B_{i(\bfc)}^{\alpha_0,Q}(wv)$ of $f^{(i)}$ for $M^{(i)}$ with $i\in\{1,2\}$, we define
\begin{align*}
\lb\Lambda^{(1)};\Lambda^{(2)}\rb_{\bfc,wv}^{\Pi^{(1)},f^{(1)};\Pi^{(2)},f^{(2)}}
:=\sup_{0<t\le1}t^{-r(\bfc)/\ell}\Big\|&\big\{Q_t\big(x,\Lambda^{(1)}\big)-Q_t\big(x,\Pi_x^{(1)}f^{(1)}(x)\big)\big\}\\
&-\big\{Q_t\big(x,\Lambda^{(2)}\big)-Q_t\big(x,\Pi_x^{(2)}f^{(2)}(x)\big)\big\}\Big\|_{L_x^{i(\bfc)}(wv)}.
\end{align*}
We also omit the symbol ``$\,\Pi^{(1)},f^{(1)};\Pi^{(2)},f^{(2)}$" below for simplicity.
\end{defi}

\begin{rem}\label{rem:L-Pixfx}
It seems more natural to write ``$\, Q_t\big(x,\Lambda-\Pi_xf(x)\big)$", but we split it into two terms here to avoid the subtle question of what ``$\, \Pi_xf(x)$" is (see Proposition \ref{prop:whatisPixtau}).
Since $Q_t\big(x,\Pi_x(\cdot)\big)$ is well-defined as an element of $L_x^{i(\bfa)}(w;\bfT_\bfa^*)$,
we can define the quantity $Q_t\big(x,\Pi_xf(x)\big)$ by inserting $f(x)$ into the operator $Q_t\big(x,\Pi_x(\cdot)\big)$. See also the calculations at the beginning of the proof of Theorem \ref{thm:besovreconstruction}.
We can also define $Q_t\big(x-h,\Pi_xf(x)\big)$ for any $h\in\bbR^d$ by Remark \ref{rem:whatisknownforZ}.
\end{rem}

\section{Reconstruction theorem}\label{section:reconstruction}

In this section, we fix a regularity-integrability structure $\scT=(\bfA,\bfT,\bfG)$ of regularity $\alpha_0\le0$, and also fix $G$-controlled weights $w$ and $v$ such that $wv$ is also $G$-controlled.

\begin{thm}\label{thm:besovreconstruction}
Let $\bfc\in(0,\infty)\times[1,\infty]$.
Then for any $M=(\Pi,\Gamma)\in\scM_{w}(\scT)$ and $f\in\mcD_{v}^\bfc(\Gamma)$, there exists a unique reconstruction $\mcR f$ of $f$ for $M$ and it holds that
\begin{align}
\label{besovreconstruction}
\|\mcR f\|_{B_{i(\bfc)}^{\alpha_0,Q}(wv)}&\lesssim\|\Pi\|_{\bfc,w}\tri f\tri_{\bfc,v}^\Gamma,\\
\label{besovreconstructioncharacterize}
\lb\mcR f\rb_{\bfc,wv}^{\Pi,f}&\lesssim \|\Pi\|_{\bfc,w}\|f\|_{\bfc,v}^\Gamma.
\end{align}
Moreover, there is an affine function $C_R>0$ of $R>0$ such that
\begin{align*}
\|\mcR f^{(1)}-\mcR f^{(2)}\|_{B_{i(\bfc)}^{\alpha_0,Q}(wv)}&\le C_R\big(\|\Pi^{(1)}-\Pi^{(2)}\|_{\bfc,w}+\tri f^{(1)};f^{(2)}\tri_{\bfc,v}\big),\\
\lb\mcR f^{(1)};\mcR f^{(2)}\rb_{\bfc,wv}&\le C_R\big(\|\Pi^{(1)}-\Pi^{(2)}\|_{\bfc,w}+\| f^{(1)};f^{(2)}\|_{\bfc,v}\big)
\end{align*}
for any $M^{(i)}=(\Pi^{(i)},\Gamma^{(i)})\in\scM_{w}(\scT)$ and $f^{(i)}\in\mcD_{v}^\bfc(\Gamma^{(i)})$ with $i\in\{1,2\}$ such that $\tri M^{(i)}\tri_{\bfc,w}\le R$ and $\tri f^{(i)}\tri_{\bfc,v}\le R$.
\end{thm}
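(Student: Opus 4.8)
The strategy is the standard semigroup construction of the reconstruction, adapted to the weighted Besov-with-integrability framework. The candidate reconstruction is defined by a telescoping-in-scale sum: set $\mcR f:=\lim_{n\to\infty}Q_{2^{-n}}\big(\cdot,\Pi_{\cdot}f(\cdot)\big)$ in the appropriate distributional/Besov sense, or more precisely write, for $0<t\le 1$,
\begin{align*}
Q_t\big(x,\Pi_x f(x)\big)=Q_1\big(x,\Pi_x f(x)\big)-\int_t^1 \partial_s Q_s\big(x,\Pi_x f(x)\big)\,ds,
\end{align*}
and rewrite $\partial_s Q_s\big(x,\Pi_x f(x)\big)=(\partial_s Q)_{s/2}\big(x,\cdot\big)$ applied to $Q_{s/2}\big(\cdot,\Pi_{\cdot}f(\cdot)\big)$ after inserting, via the algebraic relation $\Pi_x=\Pi_y\Gamma_{yx}$, the ``wrong base point'' term $Q_{s/2}\big(y,\Pi_y f(y)\big)$. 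The key pointwise estimate — the heart of the proof — is that the increment
\begin{align*}
Q_{s/2}\big(y,\Pi_y f(y)\big)-Q_{s/2}\big(y,\Pi_y\Gamma_{yx}f(x)\big)=Q_{s/2}\big(y,\Pi_y\Delta_{x;y-x}^\Gamma f(x)\big)
\end{align*}
is controlled, using Definition \ref{asmp1}-\ref{asmp1:gauss}, the analytic bound $\|\Pi\|_{\bfc,w}$ on each $\bfT_\bfa$-component, and the modelled-distribution bound $\|f\|_{\bfc,v}^\Gamma$, by a quantity of size $s^{r(\bfc)/\ell}$ (gaining the crucial positive power since $r(\bfc)>0$), modulo weight factors $\wco,\vco$ that are absorbed by $G$-controlledness; this is exactly where the index bookkeeping $r(\bfa)=r(\bfcoa)... $ wait, $r(\bfc)=r(\bfc\ominus\bfa)+r(\bfa)$ and $1/i(\bfc)=1/i(\bfc\ominus\bfa)+1/i(\bfa)$ must be combined with Lemma \ref{lem:weightedholder} (to split $L^{i(\bfc)}(wv)$ into $L^{i(\bfa)}(w)\cdot L^{i(\bfc\ominus\bfa)}(v)$) and Lemma \ref{lem:weightedyoung} (to pass $(\partial_sQ)_{s/2}$ through an $L^{i(\bfc)}(wv)$ bound at cost $s^{-1}$). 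Summing/integrating $\int_t^1 s^{r(\bfc)/\ell}\,\frac{ds}{s}\lesssim 1$ uniformly in $t$ gives that $\{Q_t(\cdot,\Pi_{\cdot}f(\cdot))\}_{t}$ is Cauchy and yields a limit $\mcR f\in B_{i(\bfc)}^{\alpha_0,Q}(wv)$ together with both \eqref{besovreconstruction} and \eqref{besovreconstructioncharacterize}. The bound \eqref{besovreconstruction} additionally uses $\alpha_0\le 0\le r(\bfc)$ so that $t^{-\alpha_0/\ell}\le 1$ on $(0,1]$, plus Proposition \ref{prop:abinjection} to land in the $B^{\alpha_0,Q}$ space.

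For uniqueness of the reconstruction, suppose $\Lambda_1,\Lambda_2$ both satisfy Definition \ref{def:besovreconst}; then $\Lambda:=\Lambda_1-\Lambda_2$ satisfies $\sup_{0<t\le1}t^{-r(\bfc)/\ell}\|Q_t(\cdot,\Lambda)\|_{L^{i(\bfc)}(wv)}<\infty$ with $r(\bfc)>0$, i.e. $\Lambda\in B_{i(\bfc)}^{\beta,Q}(wv)$ for some $\beta>0$; I would argue that the only such element that is also in $B_{i(\bfc)}^{\alpha_0,Q}(wv)$ for $\alpha_0\le 0$ and arises as a limit of $L_c$-functions is $0$, using the temporal continuity $Q_t\Lambda\to\Lambda$ (Remark \ref{QtsymmetricS'} / Proposition \ref{prop:operatorQt} together with the density definition of the Besov completion) and the fact that $\|Q_t(\cdot,\Lambda)\|_{L^{i(\bfc)}(wv)}\lesssim t^{r(\bfc)/\ell}\to 0$; combined with the injectivity $\iota_{\alpha_1}^{\alpha_2}$ of Proposition \ref{prop:abinjection} this forces $\Lambda=0$ in $B_{i(\bfc)}^{\alpha_0,Q}(wv)$.

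For the Lipschitz-type (local continuity) estimates, the plan is to rerun the above construction for the difference, bilinearizing each step: write differences of products as products-of-differences via the algebraic identity $a_1b_1-a_2b_2=(a_1-a_2)b_1+a_2(b_1-b_2)$ applied at the level of $\Pi,\Gamma,f$, and bound each resulting term either by a ``difference norm'' ($\|\Pi^{(1)}-\Pi^{(2)}\|_{\bfc,w}$, $\tri f^{(1)};f^{(2)}\tri_{\bfc,v}$, and implicitly $\|\Gamma^{(1)}-\Gamma^{(2)}\|_{\bfc,w}\le \tri M^{(1)};M^{(2)}\tri_{\bfc,w}$ — note $\mcR f^{(i)}$ depends on $\Gamma^{(i)}$ only through $\Pi^{(i)}$ via the algebraic relation, so no explicit $\Gamma$-difference term survives) times a ``size norm'' of the other factor, the latter being $\le R$ by hypothesis; this produces the claimed affine dependence $C_R$ on $R$. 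The same telescoping-in-$s$ estimate with $s^{r(\bfc)/\ell}\,\frac{ds}{s}$ integrable then delivers the two displayed difference inequalities in exactly the same way as \eqref{besovreconstruction} and \eqref{besovreconstructioncharacterize}. The main obstacle throughout is purely the index/weight bookkeeping: keeping track that every time $f(x)\in\bfT_\bfa$ is paired with the $\bfT_\bfa^*$-valued object $Q_s(\cdot,\Pi_\cdot(\cdot))$ the integrability exponents split correctly as $1/i(\bfc)=1/i(\bfa)+1/i(\bfc\ominus\bfa)$ and the Hölder-pairing of weights $wv$ into $w$ and $v$ is legitimate (Lemma \ref{lem:weightedholder}), and that all stray $\wco,\vco$ factors coming from Lemmas \ref{lem:weightedyoung} and \ref{lem:weightedtranslation} and from $\Delta^\Gamma_{x;h}$ evaluated at $h=y-x$ are absorbed into the $G$-controlled weight $wv$ uniformly in the scale $s$; none of this is conceptually hard but it is where essentially all the length of the proof will go.
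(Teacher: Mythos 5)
Your overall strategy is indeed the paper's semigroup approach, and several pieces (the coherence estimate $Q_{s}(y,\Pi_y\Delta^\Gamma f)$ with the exponent splitting $1/i(\bfc)=1/i(\bfa)+1/i(\bfc\ominus\bfa)$ and weight splitting $wv=w\cdot v$, the uniqueness argument via temporal continuity and Proposition \ref{prop:abinjection}, the bilinearization for the local Lipschitz bounds) are in the right place. However, the central construction step has a genuine gap. After writing $Q_t(x,\Pi_xf(x))=Q_1(x,\Pi_xf(x))-\int_t^1\partial_sQ_s(x,\Pi_xf(x))\,ds$, inserting $\partial_sQ_s=(\partial Q)_{s/2}Q_{s/2}$ and swapping the base point via $\Pi_x=\Pi_y\Gamma_{yx}$, you only estimate the coherence error; you never account for the main term $\int_{\bbR^d}(\partial Q)_{s/2}(x,y)\,Q_{s/2}(y,\Pi_yf(y))\,dy$. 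That term has $L^{i(\bfc)}(wv)$-norm of order $s^{\alpha_0/\ell-1}$, which is not integrable at $s=0$ when $\alpha_0\le 0$, and correspondingly the family $g_t:=Q_t(\cdot,\Pi_\cdot f(\cdot))$ is \emph{not} Cauchy in $L^{i(\bfc)}(wv)$ (its norm blows up like $t^{\alpha_0/\ell}$). So the conclusion ``the coherence integral converges, hence $\{g_t\}_t$ is Cauchy and yields $\mcR f$ with \eqref{besovreconstruction} and \eqref{besovreconstructioncharacterize}'' does not follow from the estimate you prove; convergence can only hold in a strictly weaker topology, and your sketch provides no mechanism that kills the main term.

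The missing idea is an \emph{exact} cancellation of the main contributions, which the paper achieves by working with the two-parameter family $\mcR_s^tf(x)=\int_{\bbR^d} Q_{t-s}(x,y)Q_s(y,\Pi_yf(y))\,dy$: by the semigroup property (no time derivative needed at this stage), for $u<s\le t\wedge 1$ the difference $\mcR_s^tf-\mcR_u^tf$ reduces identically to an integral of $Q_u\big(y-h,\Pi_{y-h}(\Gamma_{(y-h)y}f(y)-f(y-h))\big)$ against Gaussian-type kernels, so only the coherence error survives and one gets $\|\mcR_s^tf-\mcR_u^tf\|_{L^{i(\bfc)}(wv)}\lesssim s^{r(\bfc)/\ell}$ for fixed $t$. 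The limit $\mcR_0^tf$ satisfies $Q_s\mcR_0^tf=\mcR_0^{t+s}f$ and the uniform bound $\|\mcR_0^tf\|_{L^{i(\bfc)}(wv)}\lesssim t^{\alpha_0/\ell}$; the $t\downarrow0$ limit defining $\mcR f$ is then taken in $B_{i(\bfc)}^{\alpha_0-\varepsilon,Q}(wv)$ via Lemma \ref{lem:timecontinuity}, and \eqref{besovreconstruction} follows from $Q_s\mcR f=\mcR_0^sf$ together with that uniform bound (not from the remark ``$t^{-\alpha_0/\ell}\le 1$'', which by itself would require a $t$-uniform bound on $\|Q_t\mcR f\|_{L^{i(\bfc)}(wv)}$ that is false for $\alpha_0<0$), while \eqref{besovreconstructioncharacterize} is the Cauchy bound at $u\downarrow 0$, $s=t$. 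With this replacement of your step two, the rest of your plan (index/weight bookkeeping, uniqueness, Lipschitz estimates) goes through essentially as in the paper.
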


\begin{rem}
The original reconstruction theorem \cite[Theorem 3.10]{Hai14} was extended to different types of norms \cite{HL17, HR20, BL22}.
Hairer and Labb\'e \cite{HL17} proved a reconstruction theorem for $B_{\infty,\infty}$ type models and $B_{p,q}$ type modelled distributions. Their result is a special case of Theorem \ref{thm:besovreconstruction} if we ignore $q$-exponents.
Broux and Lee \cite{BL22} proved Besov reconstruction theorem at the level of ``coherent germ", which was the notion introduced by Caravenna and Zambotti \cite{CZ20} to reformulate the reconstruction theorem without using regularity structures.
As seen in the following proof, our situation is contained in \cite{BL22} as a special case because $\{F_x:=\Pi_xf(x)\}_{x\in\bbR^d}$ is a coherent germ.
However, the detailed regularity-integrability structure is effectively used in the paper \cite{BPHZ}.
As for the different norm, Hensel and Rosati \cite{HR20} proved Triebel--Lizorkin type reconstruction theorem for $F_{\infty,\infty}$ type models and $F_{p,q}$ type modelled distributions.
\end{rem}

\begin{proof}
It is sufficient to show the bounds \eqref{besovreconstruction} and \eqref{besovreconstructioncharacterize} for a single model and modelled distribution. The proofs of the local Lipschitz estimates are simple modifications.
For $t>0$ and $0<s\le t\wedge1$, we define the functions
\begin{align*}
\mcR_s^t f(x):=
\left\{
\begin{aligned}
&\int_{\bbR^d}Q_{t-s}(x,y)Q_s\big(y,\Pi_yf(y)\big)dy,\quad&&s<t,\\
&Q_t\big(x,\Pi_xf(x)\big),&&s=t.
\end{aligned}
\right.
\end{align*}
By Lemma \ref{lem:weightedholder}, we have
\begin{align*}
\big\|Q_s\big(y,\Pi_yf(y)\big)\big\|_{L_y^{i(\bfc)}(wv)}
&\le\sum_{\bfa\prec \bfc}\Big\|\big\|Q_s\big(y,\Pi_y(\cdot)\big)\big\|_{\bfT_\bfa^*}\|P_\bfa f(y)\|_{\bfT_\bfa}\Big\|_{L_y^{i(\bfc)}(wv)}\\
&\le\sum_{\bfa\prec \bfc}\big\|Q_s\big(y,\Pi_y(\cdot)\big)\big\|_{L_y^{i(\bfa)}(w;\bfT_\bfa^*)}\|f\|_{L^{i(\bfcoa)}(v;\bfT_\bfa)}\\
&\le \|\Pi\|_{\bfc,w}\lp f\rp_{\bfc,v}\sum_{\bfa\prec \bfc}s^{r(\bfa)/\ell}
\lesssim\|\Pi\|_{\bfc,w}\lp f\rp_{\bfc,v}\, s^{\alpha_0/\ell}.
\end{align*}
Hence by Proposition \ref{prop:operatorQt}, we have $\mcR_s^tf\in L_c^{i(\bfc)}(wv)$ and $\|\mcR_s^tf\|_{L^{i(\bfc)}(wv)}\lesssim\|\Pi\|_{\bfc,w}\lp f\rp_{\bfc,v}\, s^{\alpha_0/\ell}$.
We separate the proof into five steps.

\medskip

\noindent
{\bf (1) Coherence property.}\ Set $F_x:=\Pi_xf(x)$. (This is an abuse of notation as mentioned in Remark \ref{rem:L-Pixfx}, but it does not cause a serious problem because it always appears in the form $Q_t(x-h,F_x)$.) By Lemmas \ref{lem:weightedholder} and \ref{lem:weightedtranslation}, we have
\begin{align*}
\big\|Q_t(x-h,F_x-F_{x-h})\big\|_{L_x^{i(\bfc)}(wv)}
&=\big\|Q_t\big(x-h,\Pi_{x-h}\big\{\Gamma_{(x-h)x}f(x)-f(x-h)\big\}\big)\big\|_{L_x^{i(\bfc)}(wv)}\\
&\le\sum_{\bfa\prec \bfc}\big\|Q_t\big(x-h,\Pi_{x-h}(\cdot)\big)\big\|_{L_x^{i(\bfa)}(w;\bfT_\bfa^*)}\|\Delta_{x;h}^\Gamma f\|_{L_x^{i(\bfcoa)}(v;\bfT_\bfa)}
\\
&\le \|\Pi\|_{\bfc,w}\|f\|_{\bfc,v}^\Gamma\,\wco(h)\sum_{\bfa\prec \bfc}t^{r(\bfa)/\ell}\vco(h)\|h\|_{\mfs}^{r(\bfcoa)}.
\end{align*}

\noindent
{\bf (2) Convergence as $s\downarrow0$.}\ 
By the semigroup property of $\{Q_t\}_{t>0}$, for $0<u<s<t\wedge1$,
\begin{align*}
|\mcR_s^tf(x)-\mcR_u^tf(x)|
&=\bigg|\int_{(\bbR^d)^2}Q_{t-s}(x,y)Q_{s-u}(y,y-h)Q_u(y-h,F_y-F_{y-h})dydh\bigg|\\
&\lesssim\int_{(\bbR^d)^2}G_{t-s}(x-y)G_{s-u}(h)\big|Q_u(y-h,F_y-F_{y-h})\big|dydh.
\end{align*}
By applying Lemma \ref{lem:weightedyoung} to the convolution with respect to $y$,
\begin{align*}
\|\mcR_u^tf-\mcR_s^tf\|_{L^{i(\bfc)}(wv)}
&\lesssim\int_{\bbR^d}G_{s-u}(h)\big\|Q_u(y-h,F_{y}-F_{y-h})\big\|_{L_y^{i(\bfc)}(wv)}dh\\
&\le\|\Pi\|_{\bfc,w}\|f\|_{\bfc,v}^\Gamma\sum_{\bfa\prec\bfc}u^{r(\bfa)/\ell}\int_{\bbR^d}\|h\|_{\mfs}^{r(\bfcoa)}(\wco\vco)(h)G_{s-u}(h)dh\\
&\lesssim
\|\Pi\|_{\bfc,w}\|f\|_{\bfc,v}^\Gamma\sum_{\bfa\prec\bfc}u^{r(\bfa)/\ell}(s-u)^{r(\bfcoa)/\ell}.
\end{align*}
In the last inequality, we use the property \eqref{weightintegrable} for $\wco\vco$.
Since $r(\bfa)+r(\bfcoa)=r(\bfc)$, we have for any $u\in[s/2,s)$,
\begin{align}\label{eq:besovreconstructioncauchy}
\|\mcR_u^tf-\mcR_s^tf\|_{L^{i(\bfc)}(wv)}\lesssim\|\Pi\|_{\bfc,w}\|f\|_{\bfc,v}^\Gamma\, s^{r(\bfc)/\ell}.
\end{align}
Also for $u\in(0,s/2)$, by taking $n\in\bbN$ such that $u\in[s/2^{n+1},s/2^n)$, we have
\begin{align*}
\|\mcR_u^tf-\mcR_s^tf\|_{L^{i(\bfc)}(wv)}
&\le\sum_{m=0}^{n-1}\|\mcR_{s/2^m}^tf-\mcR_{s/2^{m+1}}^tf\|_{L^{i(\bfc)}(wv)}+\|\mcR_{s/2^n}^tf-\mcR_u^tf\|_{L^{i(\bfc)}(wv)}\\
&\lesssim\|\Pi\|_{\bfc,w}\|f\|_{\bfc,v}^\Gamma\bigg\{\sum_{m=0}^{n-1}\bigg(\frac{s}{2^m}\bigg)^{r(\bfc)/\ell}+\bigg(\frac{s}{2^n}\bigg)^{r(\bfc)/\ell}\bigg\}\\
&\lesssim\|\Pi\|_{\bfc,w}\|f\|_{\bfc,v}^\Gamma\, s^{r(\bfc)/\ell}.
\end{align*}
Moreover, the same bound for the case $s=t\le1$ can be obtained by a similar argument.
In the end, the bound \eqref{eq:besovreconstructioncauchy} holds for any $0<u<s\le t\wedge1$.
Since $r(\bfc)>0$, this implies that $\{\mcR_s^tf\}_{0<s\le t\wedge1}$ is Cauchy in $L_c^{i(\bfc)}(wv)$ as $s\downarrow0$. We denote its limit by
$$
\mcR_0^tf:=\lim_{s\downarrow0}\mcR_s^tf.
$$

\noindent
{\bf (3) Uniform bounds.}\ Combining the Cauchy property \eqref{eq:besovreconstructioncauchy} with the initial bound
$$
\|\mcR_{t\wedge1}^tf\|_{L^{i(\bfc)}(wv)}\lesssim\|\Pi\|_{\bfc,w}\lp f\rp_{\bfc,v}(t\wedge1)^{\alpha_0/\ell}
$$
obtained at the beginning of the proof, we have
\begin{align}\label{eq:besovreconstructuniform}
\|\mcR_0^tf\|_{L^{i(\bfc)}(wv)}
\lesssim\|\Pi\|_{\bfc,w}\tri f \tri_{\bfc,v}^\Gamma(t\wedge1)^{\alpha_0/\ell}.
\end{align}
Incidentally, we have the identity
$$
Q_s\mcR_u^tf=\mcR_u^{t+s}f,\qquad 0<u\le t\wedge1,\ s>0
$$
from the semigroup property. Letting $u\downarrow0$, we have
\begin{align}\label{eq:besovreconstructsemigroupR0}
Q_s\mcR_0^tf=\mcR_0^{t+s}f,\qquad t,s>0.
\end{align}
By \eqref{eq:besovreconstructsemigroupR0} and the bound \eqref{eq:besovreconstructuniform}, we have the estimate of $\mcR_0^tf$ in the Besov norm
\begin{align}\label{eq:besovreconstructBesovbdd}
\|\mcR_0^tf\|_{B_{i(\bfc)}^{\alpha_0,Q}(wv)}
&=\sup_{0<s\le1}s^{-\alpha_0/\ell}\|\mcR_0^{t+s}f\|_{L^{i(\bfc)}(wv)}
\lesssim\|\Pi\|_{\bfc,w}\tri f \tri_{\bfc,v}^\Gamma.
\end{align}

\medskip

\noindent
{\bf (4) Convergence as $t\downarrow0$.}\ By the semigroup property \eqref{eq:besovreconstructsemigroupR0}, the uniform bound \eqref{eq:besovreconstructBesovbdd}, and Lemma \ref{lem:timecontinuity}, we have for any $\varepsilon\in(0,\ell]$ and any $0<s<t$,
\begin{align*}
\|\mcR_0^tf-\mcR_0^sf\|_{B_{i(\bfc)}^{\alpha_0-\varepsilon,Q}(wv)}
&=\|(Q_{t-s}-\id)\mcR_0^sf\|_{B_{i(\bfc)}^{\alpha_0-\varepsilon,Q}(wv)}
\lesssim(s-r)^{\varepsilon/\ell}\|\Pi\|_{\bfc,w}\tri f \tri_{\bfc,v}^\Gamma.
\end{align*}
This implies that $\{\mcR_0^tf\}_{t\in(0,1]}$ is Cauchy in $B_{i(\bfc)}^{\alpha_0-\varepsilon,Q}(wv)$ as $t\downarrow0$. We denote its limit by
$$
\mcR f:=\lim_{t\downarrow0}\mcR_0^tf.
$$
Incidentally, by letting $t\downarrow0$ in \eqref{eq:besovreconstructsemigroupR0}, we have 
$$
Q_s\mcR f=\mcR_0^sf,\qquad s>0.
$$
Combining this with the uniform bound \eqref{eq:besovreconstructuniform}, we have that $\mcR f$ actually belongs to $B_{i(\bfc)}^{\alpha_0,Q}(wv)$ and the result \eqref{besovreconstruction} follows.
On the other hand, by letting $u\downarrow0$ and $s=t$ in \eqref{eq:besovreconstructioncauchy}, we have the result \eqref{besovreconstructioncharacterize}.

\medskip

\noindent
{\bf (5) Uniqueness.}\ Let $\Lambda,\Lambda'\in B_{i(\bfc)}^{\alpha_0,Q}(wv)$ be reconstructions of $f$ for $M$.
From the definition of reconstruction, $g:=\Lambda-\Lambda'\in B_{i(\bfc)}^{\alpha_0,Q}(wv)$ satisfies
\begin{align*}
\|Q_tg\|_{L^{i(\bfc)}(wv)}
&\le\big\|Q_t(x,\Lambda)-Q_t\big(x,\Pi_xf(x)\big)\big\|_{L_x^{i(\bfc)}(wv)}+\big\|Q_t(x,\Lambda')-Q_t\big(x,\Pi_xf(x)\big)\big\|_{L_x^{i(\bfc)}(wv)}\\
&\lesssim t^{r(\bfc)/\ell}.
\end{align*}
Then by using Lemma \ref{lem:timecontinuity} again, we have that for any $\varepsilon\in(0,\ell]$
\begin{align*}
\|g\|_{B_{i(\bfc)}^{\alpha_0-\varepsilon,Q}(wv)}&\le\|(Q_t-\id)g\|_{B_{i(\bfc)}^{\alpha_0-\varepsilon,Q}(wv)}+\|Q_tg\|_{B_{i(\bfc)}^{\alpha_0-\varepsilon,Q}(wv)}\\
&\lesssim t^{\varepsilon/\ell}\|g\|_{B_{i(\bfc)}^{\alpha_0,Q}(wv)}+\|Q_tg\|_{L^{i(\bfc)}(wv)}
\lesssim t^{(\varepsilon\wedge r(\bfc))/\ell}.
\end{align*}
Since $r(\bfc)>0$, we have $g=0$ in $B_{i(\bfc)}^{\alpha_0-\varepsilon,Q}(wv)$ by taking the limit $t\downarrow0$. By Proposition \ref{prop:abinjection}, this implies $g=0$ in $B_{i(\bfc)}^{\alpha_0,Q}(wv)$, and thus $\Lambda=\Lambda'$.
\end{proof}

\section{Multilevel Schauder estimate}\label{section:schauder}

In this section, we consider the integral operator of the form $f\mapsto \int_{\bbR^d}K(\cdot,x)f(x)dx$.
The convolution with Green function of Laplacian and the spacetime convolution with heat kernel are typical examples. We lift such an integral operator to the operator $\mcK$ acting on modelled distributions and prove its continuity (Theorem \ref{thm:besovschauder}).

\subsection{Regularizing kernels}

We have in mind the integral kernel $L(x,y)$ typically singular at the diagonal $\{x=y\}$, but precisely we consider its ``rough part" $K(x,y)$.
As in \cite[Lemma 5.5]{Hai14}, Hairer considered a decomposition $L=K+R$, where $R$ is a ``smooth part" which sends any distributions into sufficiently smooth functions, and a rough part $K$ of $L$ can be decomposed into the sum $\sum_{n=0}^\infty K_n$ of locally supported smooth functions $K_n$ with good scaling properties.
In this paper, we instead consider an integral representation $K=\int_0^1K_tdt$ of $K$ by smooth functions $K_t$. We impose a restrictive assumption (Definition \ref{asmp2}-\ref{asmp2:convolutionwithQ} below) for the convolution of $K_t$ and $Q_s$ instead of generality, but it simplifies the proof of multilevel Schauder estimate.

\begin{defi}\label{asmp2}
Let $\bar\beta>0$.
A \emph{$\bar\beta$-regularizing (integral) kernel admissible for $\{Q_t\}_{t>0}$} is a family of continuous functions $\{K_t:\bbR^d\times\bbR^d\to\bbR\}_{t>0}$ which satisfies the following properties for some constants $\delta>0$ and $C_K>0$.
\begin{enumerate}
\renewcommand{\theenumi}{(\roman{enumi})}
\renewcommand{\labelenumi}{(\roman{enumi})}
\item\label{asmp2:convolutionwithQ}
(Convolution with $Q$)
For any $0<s<t$ and $x,y\in\bbR^d$,
$$
\int_{\bbR^d}K_{t-s}(x,z)Q_s(z,y)dz=K_t(x,y).
$$
\item\label{asmp2:gauss}
(Upper estimate)
For any ${\bf k}\in\bbN^d$ with $|{\bf k}|_\mfs<\delta$, the ${\bf k}$-th partial derivative of $K_t(x,y)$ with respect to $x$ exists, and we have for any $t>0$ and $x,y\in\bbR^d$,
$$
|\partial_x^{\bf k}K_t(x,y)|\le C_Kt^{(\bar\beta-|{\bf k}|_\mfs)/\ell-1}G_t(x-y).
$$
\item\label{asmp2:holder}
(H\"older continuity)
For any ${\bf k}\in\bbN^d$ with $|{\bf k}|_\mfs<\delta$, any $t>0$ and $x,y,h\in\bbR^d$ with $\|h\|_{\mfs}\le t^{1/\ell}$,
\begin{align*}
&\bigg|\partial_x^{\bf k}K_t(x+h,y)-\sum_{|{\bf l}|_\mfs<\delta-|{\bf k}|_\mfs}\frac{h^{\bf l}}{{\bf l}!}\partial_x^{{\bf k}+{\bf l}}K_t(x,y)\bigg|
\le C_K\|h\|_\mfs^{\delta-|{\bf k}|_\mfs}\,  t^{(\bar\beta-\delta)/\ell-1}G_t(x-y).
\end{align*}
\end{enumerate}
\end{defi}

\begin{rem}\label{asmp2'}
The property \ref{asmp2:holder} still holds if we replace $\delta$ with arbitrary $\varepsilon\in(0,\delta)$.
To see this, we have only to decompose
\begin{align*}
&\partial_x^{\bf k}K_t(x+h,y)-\sum_{|{\bf l}|_\mfs<\varepsilon-|{\bf k}|_\mfs}\frac{h^{\bf l}}{{\bf l}!}\partial_x^{{\bf k}+{\bf l}}K_t(x,y)\\
&=\bigg(\partial_x^{\bf k}K_t(x+h,y)-\sum_{|{\bf l}|_\mfs<\delta-|{\bf k}|_\mfs}\frac{h^{\bf l}}{{\bf l}!}\partial_x^{{\bf k}+{\bf l}}K_t(x,y)\bigg)+\sum_{\varepsilon-|{\bf k}|_{\mfs}\le|{\bf l}|_{\mfs}<\delta-|{\bf k}|_{\mfs}}\frac{h^{\bf l}}{{\bf l}!}\partial_x^{{\bf k}+{\bf l}}K_t(x,y)
\end{align*}
and use properties \ref{asmp2:gauss}, \ref{asmp2:holder}, and the condition $\|h\|_{\mfs}\le t^{1/\ell}$.
\end{rem}

\begin{exam}
Let $\{Q_t\}_{t>0}$ be a $G$-type semigroup generated by the parabolic operator \eqref{exam:parabolicop} in Example \ref{exam:FS}. An example of admissible regularizing kernels is given by
\begin{align}\label{ex:K=DQ}
K_t(x,y)=\sum_{|{\bf k}|_\mfs\le \ell_1}b_{\bf k}(x)\partial_x^{\bf k}Q_t(x,y)
\end{align}
with an exponent $\ell_1<\ell$ and bounded H\"older continuous coefficients $b_{\bf k}(x)$. Then $\{K_t\}_{t>0}$ is $(\ell-\ell_1)$-regularizing.
The exponent $\delta$ depends on the H\"older regularity of the coefficients $b_{\bf k}$ and $a_{\bf k}$ in the operator \eqref{exam:parabolicop}. See \cite[Appendix A]{BHK22} for details.

\medskip

There are two typical examples.
\begin{itemize}
\item Let $\mfs=(1,1,\dots,1)$, $\ell=2$, and $P(\partial_x)=\Delta-1$ in \eqref{exam:parabolicop}. The corresponding $Q_t$ is the heat semigroup $e^{t(\Delta-1)}$. Then the inverse operator $(1-\Delta)^{-1}$ has the representation
$$
(1-\Delta)^{-1}=-\int_0^\infty Q_tdt=-\int_0^1Q_tdt+Q_1(1-\Delta)^{-1}.
$$
Since $Q_1(1-\Delta)^{-1}$ has a sufficient regularization effect, the rough part of $(1-\Delta)^{-1}$ is represented as the integral $\int_0^1K_tdt$ with $K_t=-Q_t$.
Since this $K_t$ is of the form \eqref{ex:K=DQ} with $\ell_1=0$, the regularizing order is $\bar\beta=2$. Moreover, since $K_t$ is smooth, we can choose arbitrary large $\delta>0$.
\item Let $\mfs=(2,1,\dots,1)$, $\ell=4$, and $P(\partial_x)=\partial_{x_1}^2-(\Delta_{x'}-1)^2$ in \eqref{exam:parabolicop}. Denote by $Q_t=e^{tP(\partial_x)}$ the corresponding heat semigroup. Then the inverse $\big(\partial_{x_1}-(\Delta_{x'}-1)\big)^{-1}$ of the parabolic operator (considered in $(x_1,x')\in\bbR\times\bbR^{d-1}$) has the representation
\begin{align*}
\big(\partial_{x_1}-(\Delta_{x'}-1)\big)^{-1}&=\big(\partial_{x_1}+(\Delta_{x'}-1)\big)\big(\partial_{x_1}^2-(\Delta_{x'}-1)^2\big)^{-1}\\
&=\big(\partial_{x_1}+(\Delta_{x'}-1)\big)\int_0^1Q_tdt-\big(\partial_{x_1}+(\Delta_{x'}-1)\big)Q_1\big(P(\partial_x)\big)^{-1}.
\end{align*}
Therefore, the rough part of $\big(\partial_{x_1}-(\Delta_{x'}-1)\big)^{-1}$ is also represented as the integral $\int_0^1K_tdt$ with $K_t=\big(\partial_{x_1}+(\Delta_{x'}-1)\big)Q_t$.
Since $\ell_1=2$ in this case, the regularizing order is $\bar\beta=4-2=2$, and we can choose arbitrary large $\delta>0$.
\end{itemize}
\end{exam}

The following result clarifies the meaning of regularizing kernels.
Let $w$ be a $G$-controlled weight. For any $\alpha>0$ and $p\in[1,\infty]$, we define $B_p^{\alpha}(w)$ as the space of all measurable functions $f$ such that, there is a family of measurable  functions $\{\partial^{\bf k}f\}_{|{\bf k}|_\mfs<\alpha}$ satisfying $\partial^{\bf 0}f=f$ and
\begin{align*}
\|T_{x;h}^{\alpha-|{\bf k}|_\mfs}(\partial^{\bf k}f)\|_{L_x^p(w)}
&:=\bigg\|(\partial^{\bf k}f)(x-h)-\sum_{|{\bf l}|_\mfs<\alpha-|{\bf k}|_\mfs}\frac{(-h)^{\bf l}}{{\bf l}!}(\partial^{{\bf k}+{\bf l}}f)(x)\bigg\|_{L_x^p(w)}\\
&\lesssim w^*(h)\|h\|_\mfs^{\alpha-|{\bf k}|_\mfs}
\end{align*}
for any $|{\bf k}|_\mfs<\alpha$.
In addition, recall the definition of $\bbN[\mfs]$ in Section \ref{subsection:notation}.

\begin{lem}\label{lem:whyregularize}
Let $\{K_t\}_{t>0}$ be a $\bar\beta$-regularizing kernel admissible for $\{Q_t\}_{t>0}$.
For any function $f\in L^p(w)$ and $|{\bf k}|_{\mfs}<\delta$, we define
$$
(\partial^{\bf k}K_tf)(x):=:\partial^{\bf k}K_t(x,f):=\int_{\bbR^d}\partial_x^{\bf k}K_t(x,y)f(y)dy,\qquad
K_tf:=\partial^{\bf 0}K_tf.
$$
Then for any $\alpha\in(-\bar\beta,0]$ such that $\alpha+\bar\beta<\delta$ and $\alpha+\bar\beta\notin\bbN[\mfs]$, the map $f\mapsto Kf:=\int_0^1K_tfdt$ extends to a continuous linear operator from $B_p^{\alpha,Q}(w)$ to $B_p^{\alpha+\bar\beta}(w)$.
\end{lem}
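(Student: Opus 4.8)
The plan is to estimate $\|Kf\|_{B_p^{\alpha+\bar\beta}(w)}$ directly from the definition of $B_p^\gamma(w)$ with $\gamma:=\alpha+\bar\beta\in(0,\delta)$. First I would exploit the admissibility condition \ref{asmp2:convolutionwithQ}, which by integrating over the time parameter gives a factorization $K_t = K_{t-s}\circ Q_s$ at the level of operators; specializing $s=t/2$ yields $\partial^{\bf k}K_t f = (\partial^{\bf k}K)_{t/2}\,Q_{t/2}f$, so that the upper estimate \ref{asmp2:gauss} combined with Lemma \ref{lem:weightedyoung} bounds $\|\partial^{\bf k}K_tf\|_{L^p(w)}$ by a constant times $t^{(\bar\beta-|{\bf k}|_\mfs)/\ell-1}\|Q_{t/2}f\|_{L^p(w)}$, hence by $t^{(\bar\beta-|{\bf k}|_\mfs+\alpha)/\ell-1}\|f\|_{B_p^{\alpha,Q}(w)}$. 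Since $\bar\beta-|{\bf k}|_\mfs+\alpha > \bar\beta-\gamma+\alpha = 0$ exactly when $|{\bf k}|_\mfs<\gamma$, the time integral $\int_0^1 t^{(\bar\beta-|{\bf k}|_\mfs+\alpha)/\ell-1}\,dt$ converges, which is how one defines $\partial^{\bf k}Kf := \int_0^1 \partial^{\bf k}K_t f\,dt$ as an element of $L^p(w)$ and gets $\|\partial^{\bf k}Kf\|_{L^p(w)}\lesssim \|f\|_{B_p^{\alpha,Q}(w)}$ for every ${\bf k}$ with $|{\bf k}|_\mfs<\gamma$.

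Next I would verify the Taylor-remainder (Hölder-type) bound defining $B_p^\gamma(w)$, namely that $\|T_{x;h}^{\gamma-|{\bf k}|_\mfs}(\partial^{\bf k}Kf)\|_{L_x^p(w)}\lesssim w^*(h)\|h\|_\mfs^{\gamma-|{\bf k}|_\mfs}$. Here one splits the time integral at $t=\|h\|_\mfs^\ell$. For $t\ge \|h\|_\mfs^\ell$ one has $\|h\|_\mfs\le t^{1/\ell}$, so the Hölder continuity property \ref{asmp2:holder} of the kernel applies (with $\delta$ replaced by $\gamma$, using Remark \ref{asmp2'} since $\gamma<\delta$): $\|T_{x;h}^{\gamma-|{\bf k}|_\mfs}(\partial^{\bf k}K_tf)\|_{L_x^p(w)}$ is controlled, after the $s=t/2$ factorization and Lemmas \ref{lem:weightedyoung} and \ref{lem:weightedtranslation} (the latter producing the factor $w^*(h)$ from the translation $f\mapsto f(\cdot-h)$ implicit in the remainder), by $\|h\|_\mfs^{\gamma-|{\bf k}|_\mfs}\, t^{(\bar\beta-\gamma)/\ell-1}\|f\|_{B_p^{\alpha,Q}(w)}\, t^{\alpha/\ell}$; since $(\bar\beta-\gamma+\alpha)/\ell = 0$, integrating $\int_{\|h\|_\mfs^\ell}^1 t^{-1}\,dt$ would only give a logarithm, so instead one keeps a small positive power by using the genuine exponent $\gamma-|{\bf k}|_\mfs$ from \ref{asmp2:holder} against a slightly smaller Hölder exponent in the target — more carefully, one estimates the remainder by splitting it into the difference term plus the finitely many Taylor-coefficient terms $h^{\bf l}\partial^{{\bf k}+{\bf l}}K_t f/{\bf l}!$ for $|{\bf l}|_\mfs\ge \gamma-|{\bf k}|_\mfs$, each of which carries a strictly convergent time integral by the first paragraph. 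For $t<\|h\|_\mfs^\ell$ one does not use smoothness in $h$ at all: one bounds $\partial^{\bf k}K_tf(x-h)$ and each Taylor term separately by the $L^p(w)$ estimates already obtained, picking up $\int_0^{\|h\|_\mfs^\ell} t^{(\bar\beta-|{\bf k}|_\mfs-|{\bf l}|_\mfs+\alpha)/\ell-1}\,dt \sim \|h\|_\mfs^{\bar\beta-|{\bf k}|_\mfs-|{\bf l}|_\mfs+\alpha}=\|h\|_\mfs^{\gamma-|{\bf k}|_\mfs-|{\bf l}|_\mfs}$, which is exactly the needed power of $\|h\|_\mfs$ once multiplied by the $|h^{\bf l}|\le\|h\|_\mfs^{|{\bf l}|_\mfs}$ factor. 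Finally, by density (the operator is defined on $L_c^p(w)$, which is dense in $B_p^{\alpha,Q}(w)$ by construction), these bounds extend $f\mapsto Kf$ to a continuous linear map $B_p^{\alpha,Q}(w)\to B_p^{\alpha+\bar\beta}(w)$.

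The condition $\gamma=\alpha+\bar\beta\notin\bbN[\mfs]$ enters to guarantee that the Taylor expansion truncation in \ref{asmp2:holder} and in the definition of $B_p^\gamma(w)$ uses strict inequalities $|{\bf l}|_\mfs<\gamma-|{\bf k}|_\mfs$ consistently, so that no boundary term with $|{\bf l}|_\mfs=\gamma-|{\bf k}|_\mfs$ appears and the time integrals in the two regimes are genuinely power-law rather than logarithmically divergent. The main obstacle I anticipate is the bookkeeping at the critical exponent: because $\alpha+\bar\beta=\gamma$ makes the bare $L^p$-bound integrand behave like $t^{-1}$, one must be careful that every use of a time integral either comes with a strictly positive surplus power (from $|{\bf k}|_\mfs+|{\bf l}|_\mfs<\gamma$ or from the Hölder gain $\|h\|_\mfs^{\gamma-|{\bf k}|_\mfs}$ traded against $t^{(\bar\beta-\gamma)/\ell-1}$) or is restricted to a region ($t<\|h\|_\mfs^\ell$ or $t\ge\|h\|_\mfs^\ell$) where the surplus is manufactured by the splitting point — getting these exponent balances to line up cleanly for all ${\bf k},{\bf l}$ simultaneously is the delicate part, but it is entirely routine once the splitting-at-$\|h\|_\mfs^\ell$ strategy is fixed.
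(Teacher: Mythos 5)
Your proposal is correct and follows essentially the same route as the paper: factor $\partial^{\bf k}K_t=\partial^{\bf k}K_{t/2}\circ Q_{t/2}$ to get the bound $\|\partial^{\bf k}K_tf\|_{L^p(w)}\lesssim t^{(\alpha+\bar\beta-|{\bf k}|_\mfs)/\ell-1}\|f\|_{B_p^{\alpha,Q}(w)}$, split the time integral at $t_0=\|h\|_\mfs^\ell\wedge1$, handle small times by term-by-term $L^p$ bounds (with Lemma \ref{lem:weightedtranslation} supplying the $w^*(h)$ factor) and large times by the kernel H\"older estimate at an order strictly above $\alpha+\bar\beta$, with $\alpha+\bar\beta\notin\bbN[\mfs]$ ruling out the borderline $t^{-1}$ integrand, and conclude by density. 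Your large-time bookkeeping (expand to a higher order and absorb the extra Taylor-coefficient terms via the first-paragraph estimates) is exactly the mechanism behind Remark \ref{asmp2'} and the paper's choice of $\gamma\in(\bar\beta,\delta-\alpha)$, so the two arguments coincide up to presentation.
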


\begin{proof}
By the density argument, it is sufficient to consider $f\in L_c^p(w)$.
By Definition \ref{asmp2}-\ref{asmp2:convolutionwithQ} and \ref{asmp2:gauss}, for any $|{\bf k}|_\mfs<\delta$ we have
\begin{align*}
\|\partial^{\bf k}K_tf\|_{L^p(w)}
&=\bigg\|\int_{\bbR^d}\partial_x^{\bf k}K_{t/2}(x,y)(Q_{t/2}f)(y)dy\bigg\|_{L_x^p(w)}\\
&\lesssim t^{(\bar\beta-|{\bf k}|_\mfs)/\ell-1}\bigg\|\int_{\bbR^d}G_{t/2}(x-y)|(Q_{t/2}f)(y)|dy\bigg\|_{L_x^p(w)}\\
&\lesssim t^{(\bar\beta-|{\bf k}|_\mfs)/\ell-1}\|Q_{t/2}f\|_{L^p(w)}
\lesssim t^{(\alpha+\bar\beta-|{\bf k}|_\mfs)/\ell-1}\|f\|_{B_p^{\alpha,Q}(w)}.
\end{align*}
This implies that the integral $\partial^{\bf k}Kf:=\int_0^1\partial^{\bf k}K_tfdt\in L_c^p(w)$ is well-defined for any $|{\bf k}|_\mfs<\alpha+\bar\beta$. To show the estimate of $T_{x;h}^{\alpha+\bar\beta-|{\bf k}|_\mfs}(\partial^{\bf k}Kf)$ for $|{\bf k}|_\mfs<\alpha+\bar\beta$, we divide the integral for $t$ into the regions $(0,t_0)$ and $[t_0,1)$ with $t_0:=\|h\|_\mfs^\ell\wedge1$.
In the region $(0,t_0)$, by Lemma \ref{lem:weightedtranslation} we have
\begin{align*}
&\int_0^{t_0}\|T_{x;h}^{\alpha+\bar\beta-|{\bf k}|_\mfs}(\partial^{\bf k}K_tf)\|_{L_x^p(w)}dt\\
&\lesssim \|f\|_{B_p^{\alpha,Q}(w)}\int_0^{t_0}\bigg\{w^*(h)t^{(\alpha+\bar\beta-|{\bf k}|_\mfs)/\ell-1}+\sum_{|{\bf l}|_\mfs<\alpha+\bar\beta-|{\bf k}|_\mfs}\|h\|_\mfs^{|{\bf l}|_\mfs}t^{(\alpha+\bar\beta-|{\bf k}|_\mfs-|{\bf l}|_\mfs)/\ell-1}\bigg\}dt\\
&\lesssim \|f\|_{B_p^{\alpha,Q}(w)}\bigg\{w^*(h)t_0^{(\alpha+\bar\beta-|{\bf k}|_\mfs)/\ell}+\sum_{|{\bf l}|_\mfs<\alpha+\bar\beta-|{\bf k}|_\mfs}\|h\|_\mfs^{|{\bf l}|_\mfs}t_0^{(\alpha+\bar\beta-|{\bf k}|_\mfs-|{\bf l}|_\mfs)/\ell}\bigg\}\\
&\lesssim \|f\|_{B_p^{\alpha,Q}(w)}w^*(h)\|h\|_\mfs^{\alpha+\bar\beta-|{\bf k}|_\mfs}.
\end{align*}
To consider the region $[t_0,1)$, we replace the condition ``$|{\bf l}|_\mfs<\alpha+\bar\beta-|{\bf k}|_\mfs$" in the sum for ${\bf l}$ with ``$|{\bf l}|_\mfs<\alpha+\gamma-|{\bf k}|_\mfs$" for some $\gamma\in(\bar\beta,\delta-\alpha)$. Such a choice is possible by assumption.
Since $\|h\|_\mfs=t_0^{1/\ell}\le t^{1/\ell}$ if $t\in [t_0,1)$, by Definition \ref{asmp2}-\ref{asmp2:convolutionwithQ} and \ref{asmp2:holder} we have
\begin{align*}
&\int_{t_0}^1\|T_{x;h}^{\alpha+\bar\beta-|{\bf k}|_\mfs}(\partial^{\bf k}K_tf)\|_{L_x^p(w)}dt\\
&=\int_{t_0}^1\bigg\|\int_{\bbR^d}\big\{T_{x;h}^{\alpha+\gamma-|{\bf k}|_\mfs}\big(\partial_{\cdot}^{\bf k}K_{t/2}(\cdot,y)\big)\big\}(Q_{t/2}f)(y)dy\bigg\|_{L_x^p(w)}dt\\
&\lesssim\int_{t_0}^1\bigg\|\int_{\bbR^d}\|h\|_\mfs^{\alpha+\gamma-|{\bf k}|_\mfs}t^{(\bar\beta-\gamma-\alpha)/\ell-1}G_{t/2}(x-y)|(Q_{t/2}f)(y)|dy\bigg\|_{L_x^p(w)}dt\\
&\lesssim \|h\|_\mfs^{\alpha+\gamma-|{\bf k}|_\mfs}\int_{t_0}^1t^{(\bar\beta-\gamma-\alpha)/\ell-1}\|Q_{t/2}f\|_{L^p(w)}dt\\
&\lesssim\|f\|_{B_p^{\alpha,Q}(w)}\|h\|_\mfs^{\alpha+\gamma-|{\bf k}|_\mfs}t_0^{(\bar\beta-\gamma)/\ell}
=\|f\|_{B_p^{\alpha,Q}(w)}\|h\|_\mfs^{\alpha+\bar\beta-|{\bf k}|_\mfs}.
\end{align*}
\end{proof}

\begin{rem}\label{rem:KQ=QK}
For the case $\alpha+\bar\beta<0$, we can show the continuity of $K:B_p^{\alpha,Q}(w)\to B_p^{\alpha+\bar\beta,Q}(w)$ if we assume the opposite convolution property
$$
\int_{\bbR^d}Q_{t-s}(x,z)K_s(z,y)dz=K_t(x,y)
$$
to Definition \ref{asmp2}-\ref{asmp2:convolutionwithQ}. This is the case for instance if $Q$ and $K$ are homogeneous; $Q_t(x,y)=Q_t(x-y)$ and $K_t(x,y)=K_t(x-y)$.
Indeed, since for any $f\in L_c^p(w)$,
\begin{align*}
\|Q_tKf\|_{L^p(w)}&=\bigg\|\int_0^1Q_tK_sfds\bigg\|_{L^p(w)}=\bigg\|\int_0^1K_{t+s}fds\bigg\|_{L^p(w)}\\
&=\bigg\|\int_0^1K_{(t+s)/2}Q_{(t+s)/2}fds\bigg\|_{L^p(w)}\\
&\lesssim\|f\|_{B_p^{\alpha,Q}(w)}\int_0^1(t+s)^{\bar\beta/\ell-1}(t+s)^{\alpha/\ell}ds\lesssim t^{(\alpha+\bar\beta)/\ell}\|f\|_{B_p^{\alpha,Q}(w)},
\end{align*}
we thus have $\|Kf\|_{B_p^{\alpha+\bar\beta,Q}(w)}\lesssim\|f\|_{B_p^{\alpha,Q}(w)}$.
\end{rem}

We prepare useful estimates for the proof of multilevel Schauder estimate.

\begin{lem}\label{lem:KregularizePi}
Let $w$ and $v$ be $G$-controlled weights such that $w^2$ and $wv$ are also $G$-controlled.
Let $\scT=(\bfA,\bfT,\bfG)$ be a regularity-integrability structure
and let $M=(\Pi,\Gamma)\in\scM_w(\scT)$.
For any $\bfa\in \bfA$, $\bfc\in\bbR\times[1,\infty]$ such that $\bfa\prec\bfc$, $|{\bf k}|_{\mfs}<\delta$, and $t\in(0,1]$, we have
$$
\big\|\partial^{\bf k}K_t\big(x,{\Pi}_x(\cdot)\big)\big\|_{L_x^{i(\bfa)}(w^2;\bfT_\bfa^*)}\lesssim C_K\|\Pi\|_{\bfc,w}(1+\|\Gamma\|_{\bfc,w})\,t^{(r(\bfa)+\bar\beta-|{\bf k}|_{\mfs})/\ell-1},
$$
where the implicit proportional constant depends only on $G,w$, and $\bfA$.
Consequently, if $|{\bf k}|_\mfs<(r(\bfa)+\bar\beta)\wedge\delta$, the operator
$$
\partial^{\bf k}K\big(x,\Pi_x(\cdot)\big):=\int_0^1\partial^{\bf k}K_t\big(x,\Pi_x(\cdot)\big)dt
$$
is well-defined in the class $L_x^{i(\bfa)}(w^2;\bfT_\bfa^*)$.
In addition, for any $f\in\mcD_{v}^\bfc(\Gamma)$ with $\bfc\in\bbR\times[1,\infty]$ and its reconstruction $\Lambda$, and any $|{\bf k}|_{\mfs}<\delta$ and $t\in(0,1]$, we have
$$
\big\|\partial^{\bf k}K_t(x,\Lambda)-\partial^{\bf k}K_t\big(x,{\Pi}_xf(x)\big)\big\|_{L_x^{i(\bfc)}(wv)}\lesssim C_K(\lb\Lambda\rb_{\bfc,wv}^{\Pi,f}+\|\Pi\|_{\bfc,w}\|f\|_{\bfc,v}^\Gamma)\,t^{(r(\bfc)+\bar\beta-|{\bf k}|_{\mfs})/\ell-1},
$$
where the implicit proportional constant depends only on $G,w,v$, and $\bfA$.
Consequently, the function $x\mapsto\partial^{\bf k}K(x,\Lambda)-\partial^{\bf k}K\big(x,{\Pi}_xf(x)\big)$ is well-defined as an element of $L_x^{i(\bfc)}(wv)$ if $|{\bf k}|_\mfs<(r(\bfc)+\bar\beta)\wedge\delta$.
\end{lem}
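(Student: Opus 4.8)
The plan is to prove the two displayed estimates by essentially the same mechanism used in the proof of Proposition \ref{prop:whatisPixtau} and step (1) of the proof of Theorem \ref{thm:besovreconstruction}, namely: use the convolution identity \ref{asmp2:convolutionwithQ} of Definition \ref{asmp2} to write $\partial^{\bf k}K_t = (\partial^{\bf k}K)_{t/2}\circ Q_{t/2}$, then bound the $(\partial^{\bf k}K)_{t/2}$ part using the upper estimate \ref{asmp2:gauss} together with Lemma \ref{lem:weightedyoung}, and bound the $Q_{t/2}$ part using the available estimates on the model. For the first inequality, I would first expand $Q_{t/2}(y,\Pi_y(\cdot))$ via the algebraic relation $\Pi_y = \Pi_x\Gamma_{xy}$ (or rather, reversing roles, $\Pi_x\tau = \Pi_y\Gamma_{yx}\tau$), exactly as in Proposition \ref{prop:whatisPixtau}, to pick up a sum over $\bfb\preceq\bfa$ of terms involving $\|Q_{t/2}(y,\Pi_y(\cdot))\|_{L^{i(\bfb)}(w;\bfT_\bfb^*)}\lesssim\|\Pi\|_{\bfc,w}\,t^{r(\bfb)/\ell}$ and $\|\Gamma_{yx}\|_{\mcL(\bfT_\bfa,\bfT_\bfb)}$ in $L^{i(\bfaob)}$; since $w^2$ is $G$-controlled and $(w^*)$-moderate, the spatial integral over $y$ (done via Lemma \ref{lem:weightedholder} and then Lemma \ref{lem:weightedyoung} for the $\partial^{\bf k}K_{t/2}$-convolution) produces the factor $(1+\|\Gamma\|_{\bfc,w})$, while the kernel estimate \ref{asmp2:gauss} produces $t^{(\bar\beta-|{\bf k}|_\mfs)/\ell-1}$. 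Summing over $\bfb\preceq\bfa$ and using $r(\bfb)\ge r(\bfa)$ is wrong in the wrong direction, so here one uses instead that the smallest $r(\bfb)$ dominates — but since the sum is finite and $t\le 1$, the leading term is $\bfb=\bfa$, giving $t^{r(\bfa)/\ell}$; combined this yields $t^{(r(\bfa)+\bar\beta-|{\bf k}|_\mfs)/\ell-1}$, which is integrable in $t$ over $(0,1]$ precisely when $|{\bf k}|_\mfs<r(\bfa)+\bar\beta$, so $\partial^{\bf k}K(x,\Pi_x(\cdot))=\int_0^1\partial^{\bf k}K_t(x,\Pi_x(\cdot))\,dt$ converges in $L^{i(\bfa)}_x(w^2;\bfT_\bfa^*)$.

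For the second inequality, I would again split $\partial^{\bf k}K_t=(\partial^{\bf k}K)_{t/2}\circ Q_{t/2}$ and write
\begin{align*}
\partial^{\bf k}K_t(x,\Lambda)-\partial^{\bf k}K_t\big(x,\Pi_xf(x)\big)
&=\int_{\bbR^d}\partial_x^{\bf k}K_{t/2}(x,z)\Big\{Q_{t/2}(z,\Lambda)-Q_{t/2}\big(z,\Pi_zf(z)\big)\Big\}\,dz\\
&\quad+\int_{\bbR^d}\partial_x^{\bf k}K_{t/2}(x,z)\Big\{Q_{t/2}\big(z,\Pi_zf(z)\big)-Q_{t/2}\big(z,\Pi_xf(x)\big)\Big\}\,dz.
\end{align*}
The first term is controlled directly: the inner bracket is bounded in $L^{i(\bfc)}_z(wv)$ by $\lb\Lambda\rb_{\bfc,wv}^{\Pi,f}\,t^{r(\bfc)/\ell}$ by the definition of reconstruction (Definition \ref{def:besovreconst}), and then Lemma \ref{lem:weightedyoung} applied to the $\partial^{\bf k}K_{t/2}$-convolution together with \ref{asmp2:gauss} gives the factor $t^{(\bar\beta-|{\bf k}|_\mfs)/\ell-1}$. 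For the second term, one recognizes $Q_{t/2}(z,\Pi_zf(z))-Q_{t/2}(z,\Pi_xf(x))$ as $Q_{t/2}(z,F_z-F_x)$ in the notation $F_x:=\Pi_xf(x)$; this is exactly the "coherence" quantity controlled in step (1) of the proof of Theorem \ref{thm:besovreconstruction}, which (after replacing the roles of $x$ and $x-h$ there, i.e. setting $h=x-z$) gives $\|Q_{t/2}(z,F_z-F_x)\|\lesssim\|\Pi\|_{\bfc,w}\|f\|_{\bfc,v}^\Gamma\, t^{r(\bfa)/\ell}\,(w^*v^*)(x-z)\|x-z\|_\mfs^{r(\bfc\ominus\bfa)}$ summed over $\bfa\prec\bfc$; after integrating in $z$ against $\partial_x^{\bf k}K_{t/2}(x,z)$ — bounded by $C_K t^{(\bar\beta-|{\bf k}|_\mfs)/\ell-1}G_{t/2}(x-z)$ — and using property \eqref{weightintegrable} to absorb the polynomial weight $\|x-z\|_\mfs^{r(\bfc\ominus\bfa)}(w^*v^*)(x-z)$ against $G_{t/2}$, picking up an extra $t^{r(\bfc\ominus\bfa)/\ell}$, one gets $\|\Pi\|_{\bfc,w}\|f\|_{\bfc,v}^\Gamma\,t^{(r(\bfc)+\bar\beta-|{\bf k}|_\mfs)/\ell-1}$ since $r(\bfa)+r(\bfc\ominus\bfa)=r(\bfc)$. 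Adding the two contributions yields the claimed bound with prefactor $\lb\Lambda\rb_{\bfc,wv}^{\Pi,f}+\|\Pi\|_{\bfc,w}\|f\|_{\bfc,v}^\Gamma$, and integrability in $t$ over $(0,1]$ holds exactly when $|{\bf k}|_\mfs<r(\bfc)+\bar\beta$, giving the asserted well-definedness of $\partial^{\bf k}K(x,\Lambda)-\partial^{\bf k}K(x,\Pi_xf(x))$.

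Throughout I would reduce to smooth models and $f\in L^{i(\bfc)}_c(v;\bfT_{\prec\bfc})$ by density, so that all the pointwise manipulations (Fubini, the convolution identity \ref{asmp2:convolutionwithQ}, insertion of $f(x)$ into $Q_t(x,\Pi_x(\cdot))$) are legitimate, and the final bounds — being continuous in the relevant (pseudo-)metrics — pass to the limit. The main obstacle I anticipate is bookkeeping the weights correctly: one must verify at each convolution step that the relevant product weight ($w^2$, resp. $wv$) is $G$-controlled so that Lemma \ref{lem:weightedyoung} applies with a constant depending only on $G$ and the moderateness function, and that the $\wco,\vco$ factors generated by Lemma \ref{lem:weightedtranslation} get absorbed by condition \eqref{weightintegrable}; this is routine but must be tracked carefully because the kernel $\partial_x^{\bf k}K_{t/2}(x,z)$ is inhomogeneous. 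A secondary subtlety is that "$\Pi_xf(x)$" is only shorthand (see Remark \ref{rem:L-Pixfx}), so the term $Q_{t/2}(z,\Pi_xf(x))$ must be interpreted via $Q_{t/2}(z,\Pi_x(\cdot))$ paired with $f(x)\in\bfT_{\prec\bfc}$, as justified in Remark \ref{rem:whatisknownforZ}; this is exactly the same care already taken in the proof of Theorem \ref{thm:besovreconstruction} and causes no real difficulty.
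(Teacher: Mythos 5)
Your overall architecture --- writing $\partial^{\bf k}K_t=\partial^{\bf k}K_{t/2}\circ Q_{t/2}$ via Definition \ref{asmp2}-\ref{asmp2:convolutionwithQ}, bounding the kernel factor by \ref{asmp2:gauss}, and controlling the $Q_{t/2}$ factor through the model bounds --- is exactly the paper's, and your treatment of the second estimate (splitting $Q_{t/2}(z,\Lambda)-Q_{t/2}\big(z,\Pi_xf(x)\big)$ into the reconstruction error at $z$ plus the coherence term $Q_{t/2}\big(z,\Pi_z\{f(z)-\Gamma_{zx}f(x)\}\big)$, then absorbing $\|x-z\|_\mfs^{r(\bfcoa)}(\wco\vco)(x-z)$ into $G_{t/2}$ to gain $t^{r(\bfcoa)/\ell}$) is correct and is in substance what the paper does by invoking the identity $\Lambda-\Pi_xf(x)=\big(\Lambda-\Pi_{x-h}f(x-h)\big)+\Pi_{x-h}\big(f(x-h)-\Gamma_{(x-h)x}f(x)\big)$.

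There is, however, a genuine flaw in your derivation of the first estimate. After expanding $\Pi_x\tau=\Pi_y\Gamma_{yx}\tau$ you assert that, since the sum over $\bfb\preceq\bfa$ is finite and $t\le1$, ``the leading term is $\bfb=\bfa$, giving $t^{r(\bfa)/\ell}$''. This is backwards: for $\bfb\prec\bfa$ one has $r(\bfb)<r(\bfa)$, hence $t^{r(\bfb)/\ell}\ge t^{r(\bfa)/\ell}$ for $t\le1$, so $\sum_{\bfb\preceq\bfa}t^{r(\bfb)/\ell}$ is governed by the \emph{smallest} regularity, not by $\bfb=\bfa$. Arguing as you wrote, you would only obtain the power $t^{(\alpha_0+\bar\beta-|{\bf k}|_\mfs)/\ell-1}$ with $\alpha_0$ the regularity of $\scT$, and the threshold $|{\bf k}|_\mfs<r(\bfa)+\bar\beta$ for the well-definedness of $\partial^{\bf k}K\big(x,\Pi_x(\cdot)\big)$ would not follow. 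The correct mechanism --- the one the paper uses through the inequality \eqref{eq:Q(x-h,Pix)} of Remark \ref{rem:whatisknownforZ}, and the very one you employ in your second half --- is to retain the factor $\wco(h)\|h\|_\mfs^{r(\bfaob)}$ coming from the definition of $\|\Gamma\|_{\bfc,w}$ (with $h=y-x$), and to convert it into $t^{r(\bfaob)/\ell}$ when integrating $G_{t/2}(h)\|h\|_\mfs^{r(\bfaob)}\big(\wco(h)\big)^2$ in $h$ (scaling of $G_{t/2}$ together with \eqref{weightintegrable}, which applies because $w^2$ is $G$-controlled); then every summand contributes $t^{r(\bfb)/\ell}\,t^{r(\bfaob)/\ell}=t^{r(\bfa)/\ell}$, yielding the stated $t^{(r(\bfa)+\bar\beta-|{\bf k}|_\mfs)/\ell-1}$. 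With this repair your argument coincides with the paper's proof.
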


\begin{proof}
By Definition \ref{asmp2}-\ref{asmp2:convolutionwithQ} and \ref{asmp2:gauss}, for any $\tau\in\bfT_\bfa$,
\begin{align*}
|\partial^{\bf k}K_t(x,\Pi_x\tau)|
&=\bigg|\int_{\bbR^d}\partial^{\bf k}K_{t/2}(x,x-h)Q_{t/2}(x-h,\Pi_x\tau)dh\bigg|\\
&\lesssim C_Kt^{(\bar\beta-|{\bf k}|_{\mfs})/\ell-1}\int_{\bbR^d}G_{t/2}(h)|Q_{t/2}(x-h,\Pi_x\tau)|dh.
\end{align*}
By using the inequality \eqref{eq:Q(x-h,Pix)} obtained in Remark \ref{rem:whatisknownforZ}, we have
\begin{align*}
&\big\|\partial^{\bf k}K_t\big(x,{\Pi}_x(\cdot)\big)\big\|_{L_x^{i(\bfa)}(w^2;\bfT_\bfa^*)}\\
&\lesssim C_Kt^{(\bar\beta-|{\bf k}|_{\mfs})/\ell-1}\|\Pi\|_{\bfc,w}(1+\|\Gamma\|_{\bfc,w})\sum_{\bfb\preceq\bfa}t^{r(\bfb)/\ell}\int_{\bbR^d}G_{t/2}(h)\|h\|_{\mfs}^{r(\bfaob)}\big(w^*(h)\big)^2dh\\
&\lesssim C_Kt^{(r(\bfa)+\bar\beta-|{\bf k}|_{\mfs})/\ell-1}\|\Pi\|_{\bfc,w}(1+\|\Gamma\|_{\bfc,w}).
\end{align*}
For the remaining assertion, we have only to repeat the same argument as Remark \ref{rem:whatisknownforZ}, by replacing $\Pi_x\tau=\Pi_{x-h}\Gamma_{(x-h)x}\tau$ at the beginning of the proof with the identity
$$
\Lambda-\Pi_xf(x)=\big(\Lambda-\Pi_{x-h}f(x-h)\big)+\Pi_{x-h}\big(f(x-h)-\Gamma_{(x-h)x}f(x)\big).
$$
\end{proof}

\subsection{Abstract integrations and compatible models}

Throughout this section, we fix a $\bar\beta$-regularizing kernel $\{K_t\}_{t>0}$ admissible for $\{Q_t\}_{t>0}$.
In addition, we assume that there exists $G$-controlled weights $w_1$ and $w_2$ satisfying the assumption of Proposition \ref{prop:whatisPixtau} (only to ensure that $\Pi_x\tau$ is an element of some Besov space).
To lift the operator $K=\int_0^1K_tdt$ into the model space, we introduce the \emph{polynomial structure} generated by symbols $X_1,\dots,X_d$ as in \cite[Section 2]{Hai14}.
For any $\bfa\in\bbR\times[1,\infty]$ and $\beta>0$, we define the elements $\bfa\oplus\beta,\bfa\ominus\beta\in\bbR\times[1,\infty]$ by
\begin{align*}
\bfa\oplus\beta:=\big(r(\bfa)+\beta,i(\bfa)\big),\qquad
\bfa\ominus\beta:=\big(r(\bfa)-\beta,i(\bfa)\big).
\end{align*}

\begin{defi}\label{defTbarT}
Let $\bar\scT=(\bar{\bfA},\bar{\bfT},\bar{\bfG})$ be a regularity-integrability structure satisfying the following properties.
\begin{itemize}
\item[(1)]
$\bbN[\mfs]\times\{\infty\}\subset \bar{\bfA}$.
\item[(2)]
For each $\alpha\in\bbN[\mfs]$, the space $\bar{\bfT}_{(\alpha,\infty)}$ contains all $X^{\bf k}:=\prod_{i=1}^dX_i^{k_i}$ with $|{\bf k}|_{\mfs}=\alpha$.
\item[(3)]
The subspace $\spa\{X^{\bf k}\}_{{\bf k}\in\bbN^d}$ of $\bar\bfT$ is closed under $\bar{\bfG}$-actions.
\end{itemize}
Let $\scT=(\bfA,\bfT,\bfG)$ be another regularity-integrability structure.
A continuous linear operator $\mcI:\bfT\to\bar{\bfT}$ is called an \emph{abstract integration} of order $\beta\in(0,\bar\beta]$ if 
$$
\mcI:\bfT_\bfa\to\bar{\bfT}_{\bfa\oplus\beta}
$$
for any $\bfa\in \bfA$.
\end{defi}

\begin{defi}
Let $\scT$ and $\bar{\scT}$ be regularity-integrability structures as in Definition \ref{defTbarT}, and let $\mcI:\bfT\to\bar\bfT$ be an abstract integration of order $\beta\in(0,\bar\beta]$.
We say that the pair $(M,\bar{M})$ of two models $M=(\Pi,\Gamma)\in\scM_w(\scT)$ and $\bar{M}=(\bar{\Pi},\bar{\Gamma})\in\scM_w(\bar{\scT})$ with a $G$-controlled weight $w$ is \emph{compatible for $\mcI$} if it satisfies the following properties.
\begin{enumerate}
\renewcommand{\theenumi}{(\roman{enumi})}
\renewcommand{\labelenumi}{(\roman{enumi})}
\item\label{Iadmissible2}
For any ${\bf k}\in\bbN^d$,
$$
(\bar{\Pi}_xX^{\bf k})(\cdot)=(\cdot-x)^{\bf k},\qquad
\bar\Gamma_{yx} X^{\bf k}=\sum_{{\bf l}\le {\bf k}}\binom{\bf k}{\bf l}(y-x)^{\bf l} X^{{\bf k}-{\bf l}}.
$$
\item\label{Iadmissible3}
We define the linear map $\mcJ(x):\bfT_{\prec(\delta-\beta,1)}\to\spa\{X^{\bf k}\}_{|{\bf k}|_\mfs<\delta}\subset\bar{\bfT}$ by setting
\begin{align}\label{eq:Jx}
\mcJ(x)\tau=\sum_{|{\bf k}|_{\mfs}<r(\bfa)+\beta}\frac{X^{\bf k}}{{\bf k}!}\partial^{\bf k}K(x,\Pi_x\tau)
\end{align}
for any $\bfa\in \bfA$ such that $r(\bfa)+\beta<\delta$ and $\tau\in \bfT_{\bfa}$.
Then on the space $\bfT_{\prec(\delta-\beta,1)}$,
\begin{align}\label{ex:compatiblemodel}
\bar{\Gamma}_{yx}\big(\mcI+\mcJ(x)\big)\tau=\big(\mcI+\mcJ(y)\big)\Gamma_{yx}\tau.
\end{align}
\end{enumerate}
In addition, if the regularity $\alpha_0$ of $\scT$ is greater than $-\bar\beta$ and 
\begin{align}\label{ex:admissiblemodel}
(\bar{\Pi}_x\mcI\tau)(\cdot)=K(\cdot,\Pi_x\tau)-\sum_{|{\bf k}|_{\mfs}<r(\bfa)+\beta}\frac{(\cdot-x)^{\bf k}}{{\bf k}!}\partial^{\bf k}K(x,\Pi_x\tau),
\end{align}
(recall from Proposition \ref{prop:whatisPixtau} and Lemma \ref{lem:whyregularize} that the right-hand side is well-defined)
for any $\tau\in\bfT_\bfa$ with $r(\bfa)+\beta<\delta$, then we say that the pair $(M,\bar{M})$ is \emph{$K$-admissible for $\mcI$}.
\end{defi}

\begin{rem}
The above definition is a modification of the original one \cite{Hai14}.
Indeed, the abstract integration is defined between distinct regularity-integrability structures.
Moreover, the condition \eqref{ex:compatiblemodel} is separated from the $K$-admissibility of the model, while \eqref{ex:compatiblemodel} was a result of \eqref{ex:admissiblemodel} in \cite[Lemma 5.16]{Hai14}.
In the paper \cite{BPHZ}, we consider the situation where only \eqref{ex:compatiblemodel} holds.
\end{rem}

\begin{rem}
The quantity \eqref{eq:Jx} is only defined for almost every $x\in\bbR^d$ for $\bfa$ such that $i(\bfa)<\infty$, since elements of $L_c^{i(\bfa)}(w)$ may not be continuous. Hence there is a subtle problem that the negligible set may depend on $\tau$.
However, since we can define $\partial^{\bf k}K\big(x,\Pi_x(\cdot)\big)$ as a $\bfT_\bfa^*$-valued function of class $L^p(w^2)$, the negligible set can be chosen $\tau$-independently. Similarly, we understand \eqref{ex:compatiblemodel} and \eqref{ex:admissiblemodel} as identities for operators of $\tau$ which hold for almost every $x,y\in\bbR^d$.
\end{rem}

\subsection{Multilevel Schauder estimate in regularity-integrability structures}

In what follows, we fix regularity-integrability structures $\scT$ and $\bar{\scT}$ satisfying the setting of Definition \ref{defTbarT} and an abstract integration $\mcI$.
Moreover, let $w$ and $v$ be $G$-controlled weights such that $w^2v$ is also $G$-controlled.

\begin{defi}
For any $(\Pi,\Gamma)\in\scM_{w}(\scT)$, $f\in\mcD_v^\bfc(\Gamma)$ with $\bfc\in\bbR\times[1,\infty]$ such that $r(\bfc)+\beta<\delta$, and its reconstruction $\Lambda$, we define
$$
\mcN(x;f,\Lambda)=\sum_{|{\bf k}|_{\mfs}<r(\bfc)+\beta}\frac{X^{\bf k}}{{\bf k}!}\partial^{\bf k}K\big(x,\Lambda-\Pi_xf(x)\big)
$$
and
$$
\mcK f(x):=\mcI f(x)+\mcJ(x)f(x)+\mcN(x;f,\Lambda).
$$
\end{defi}

\begin{thm}\label{thm:besovschauder}
Let $\bfc\in(-\infty,\delta-\beta)\times[1,\infty]$ and assume either of the following conditions.
\begin{itemize}
\item[(1)]
$\beta<\bar\beta$.
\item[(2)]
$\beta=\bar\beta$, $\{r(\bfa)+\bar\beta\, ;\, \bfa\in\bfA\}\cap\bbN[\mfs]=\emptyset$, and $r(\bfc)+\bar\beta\notin\bbN[\mfs]$.
\end{itemize}
Then for any compatible pair of models $\big(M=(\Pi,\Gamma),\bar{M}=(\bar{\Pi},\bar{\Gamma})\big)\in\scM_w(\scT)\times\scM_w(\bar{\scT})$,
modelled distribution $f\in\mcD_{v}^\bfc(\Gamma)$, and any reconstruction $\Lambda$ of $f$ for $M$, the function $\mcK f$ belongs to $\mcD_{w^2v}^{\bfcpb}(\bar\Gamma)$, and we have
\begin{align}
\label{MSbesov1}
\lp \mcK f\rp_{\bfcpb,w^2v}&\lesssim
\|\mcI\|\lp f\rp_{\bfc,v}+C_K\big\{\|\Pi\|_{\bfc,w}(1+\|\Gamma\|_{\bfc,w})\tri f\tri_{\bfc,v}^\Gamma+\lb\Lambda\rb_{\bfc,wv}^{\Pi,f}\big\},\\
\label{MSbesov2}
\|\mcK f\|_{\bfcpb,w^2v}^{\bar\Gamma}
&\lesssim\|\mcI\|\| f\|_{\bfc,v}^\Gamma+C_K\big\{\|\Pi\|_{\bfc,w}(1+\|\Gamma\|_{\bfc,w})\| f\|_{\bfc,v}^\Gamma+\lb\Lambda\rb_{\bfc,wv}^{\Pi,f}\big\},
\end{align}
where $\|\mcI\|$ is the operator norm from $\bfT_{\prec\bfc}$ to $\bar{\bfT}_{\prec\bfcpb}$, and the implicit proportional constant depends only on $G,w,v,\bfc$, and $\bfA$.
Moreover, there is a quadratic function $C_R>0$ of $R>0$ such that
\begin{align*}
\tri\mcK f^{(1)};\mcK f^{(2)}\tri_{\bfcpb,w^2v}
\le C_R\big(
\tri M^{(1)};M^{(2)}\tri_{\bfc,w}+\tri f^{(1)};f^{(2)}\tri_{\bfc,v}
+\lb\Lambda^{(1)};\Lambda^{(2)}\rb_{\bfc,wv}
\big)
\end{align*}
for any $M^{(i)}=(\Pi^{(i)},\Gamma^{(i)})\in\scM_{w}(\scT)$ and $\bar{M}^{(i)}=(\bar\Pi^{(i)},\bar\Gamma^{(i)})\in\scM_w(\bar\scT)$ such that $(M^{(i)},\bar{M}^{(i)})$ is compatible, any $f^{(i)}\in\mcD_{v}^\bfc(\Gamma^{(i)})$, and any reconstructions $\Lambda^{(i)}$ of $f^{(i)}$ for $M^{(i)}$ with $i\in\{1,2\}$ such that $\tri M^{(i)}\tri_{\bfc,w}\le R$ and $\tri f^{(i)}\tri_{\bfc,v}^{\Gamma^{(i)}}\le R$.
\end{thm}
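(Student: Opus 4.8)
Following the proof of Theorem \ref{thm:besovreconstruction}, the plan is to establish the two bounds \eqref{MSbesov1}--\eqref{MSbesov2} for a single compatible triple $(M,\bar M,f)$ with a reconstruction $\Lambda$, and then to obtain the local Lipschitz estimate by running the same computation with every object replaced by its difference. Three reductions are used throughout: (i) every weight is $(0,1]$-valued, so $w^2v\le wv\le v$ and the $L^p(w^2v)$-norms in the conclusion are dominated by $L^p(wv)$- or $L^p(v)$-norms; (ii) for any $\bfa\in\bfA$ one has $\bfcpb\ominus(\bfa\oplus\beta)=\bfcoa$, while for $\alpha\in\bbN[\mfs]$ with $\alpha<r(\bfc)+\beta$ one has $\bfcpb\ominus(\alpha,\infty)=(r(\bfc)+\beta-\alpha,\,i(\bfc))$, which are exactly the identities that align the target norms (and show $\mcK f$ takes values in $\bar\bfT_{\prec\bfcpb}$, since $\mcI$ lands in the levels $\bfa\oplus\beta$ and $\mcJ(x)f(x)+\mcN(x;f,\Lambda)$ in $\spa\{X^{\bf k}\}_{|{\bf k}|_\mfs<r(\bfc)+\beta}$); (iii) Lemma \ref{lem:KregularizePi} is used in its $t$-integrated form, integration over $t\in(0,1)$ being legitimate because the exponents $(r(\bfa)+\bar\beta-|{\bf k}|_\mfs)/\ell-1$ and $(r(\bfc)+\bar\beta-|{\bf k}|_\mfs)/\ell-1$ exceed $-1$ whenever $|{\bf k}|_\mfs<r(\bfa)+\beta$ (respectively $<r(\bfc)+\beta$) and $\beta\le\bar\beta$. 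The $L^p$-bound \eqref{MSbesov1} then follows by projecting $\mcK f(x)=\mcI f(x)+\mcJ(x)f(x)+\mcN(x;f,\Lambda)$ onto each $\bar\bfT_{\bar\bfa}$: the $\mcI f(x)$-part is $\le\|\mcI\|\lp f\rp_{\bfc,v}$, and for the polynomial $X^{\bf k}$-components of $\mcJ(x)f(x)+\mcN(x;f,\Lambda)$ one combines the integrated Lemma \ref{lem:KregularizePi} with H\"older's inequality (Lemma \ref{lem:weightedholder}) against the pieces of $f$.

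The increment bound \eqref{MSbesov2} is the heart of the matter. Using the compatibility identity \eqref{ex:compatiblemodel} on $\bfT_{\prec\bfc}\subseteq\bfT_{\prec(\delta-\beta,1)}$ (valid since $r(\bfc)+\beta<\delta$), I would write
\[
\Delta_{x;h}^{\bar\Gamma}\mcK f=\big(\mcI+\mcJ(x-h)\big)\Delta_{x;h}^{\Gamma}f+\big\{\mcN(x-h;f,\Lambda)-\bar\Gamma_{(x-h)x}\mcN(x;f,\Lambda)\big\}.
\]
On the non-polynomial levels $\bfa\oplus\beta$ the only surviving term is $\mcI\Delta_{x;h}^\Gamma f$, bounded by $\|\mcI\|\|f\|_{\bfc,v}^\Gamma\,\vco(h)\|h\|_\mfs^{r(\bfcoa)}$, which is the desired estimate since $r(\bfcoa)=r(\bfcpb\ominus(\bfa\oplus\beta))$; the same bound handles any $\bfa\oplus\beta$ coinciding with a polynomial level in case (1).

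On a polynomial level $(|{\bf k}|_\mfs,\infty)$ I would identify the coefficient of $X^{\bf k}$ in $\mcK f$ itself: writing $F_x:=\Pi_xf(x)$ as in Remark \ref{rem:L-Pixfx} and letting $P_{\ge{\bf k}}$ project onto the levels $\bfa$ with $r(\bfa)+\beta>|{\bf k}|_\mfs$, the $\mcJ$- and $\mcN$-parts combine to $\tfrac{1}{{\bf k}!}c_{\bf k}(x)$ with $c_{\bf k}(x)=\big(\partial^{\bf k}K(x,\Lambda)-\partial^{\bf k}K(x,F_x)\big)+\partial^{\bf k}K\big(x,\Pi_xP_{\ge{\bf k}}f(x)\big)$, both summands well-defined by Lemma \ref{lem:KregularizePi}. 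A bookkeeping that invokes the compatibility identity once more to absorb the $\mcI f$-contributions then reduces the coefficient of $X^{\bf k}$ in $\Delta_{x;h}^{\bar\Gamma}\mcK f$ (up to an $\mcI$-term handled as above) to the Taylor-type increment $c_{\bf k}(x-h)-\sum_{|{\bf k}+{\bf l}|_\mfs<r(\bfc)+\beta}\tfrac{(-h)^{\bf l}}{{\bf l}!}c_{{\bf k}+{\bf l}}(x)$. Writing $c_{\bf m}=\int_0^1(\cdots)_t\,dt$, its $L_x^{i(\bfc)}(w^2v)$-norm is estimated by splitting at $t_0:=\|h\|_\mfs^\ell\wedge1$: for $t<t_0$ one bounds each term separately via the integrated Lemma \ref{lem:KregularizePi} and Lemma \ref{lem:weightedtranslation} for the shift by $h$ (which costs $\wco(h)^2$); for $t\ge t_0$, where $\|h\|_\mfs\le t^{1/\ell}$, one uses the kernel Taylor estimate Definition \ref{asmp2}-\ref{asmp2:holder} in the $x$-variable of $\partial_x^{\bf k}K_{t/2}(x,z)$ combined, via $\partial^{\bf k}K_t(x,\cdot)=\partial^{\bf k}K_{t/2}(x,\cdot)\circ Q_{t/2}$, with the reconstruction bound $\lb\Lambda\rb_{\bfc,wv}^{\Pi,f}$ and the coherence estimate, exactly as in the proof of Theorem \ref{thm:besovreconstruction}. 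Integrating the resulting powers of $t$ gives $\|h\|_\mfs^{r(\bfc)+\beta-|{\bf k}|_\mfs}=\|h\|_\mfs^{r(\bfcpb\ominus(|{\bf k}|_\mfs,\infty))}$, which is \eqref{MSbesov2}.

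The main obstacle is this polynomial-level computation, and two points need care. First, the naive term-by-term bound of the coefficient of $X^{\bf k}$ in $\mcJ(x-h)\Delta_{x;h}^\Gamma f$ yields only the power $\|h\|_\mfs^{r(\bfc)-r(\bfa)}$ with $r(\bfc)-r(\bfa)<r(\bfc)+\beta-|{\bf k}|_\mfs$, which is too small for $\|h\|_\mfs\le1$; this forces one to keep $\mcJ$ and $\mcN$ grouped through $c_{\bf k}$, so that the high-level part of $\partial^{\bf k}K(x,F_x)$ produced by $\mcJ$ cancels against the matching term inside $\mcN$. Second, in the regime $t\ge t_0$ the Taylor sum over $|{\bf k}+{\bf l}|_\mfs<r(\bfc)+\beta$ does not match the one in Definition \ref{asmp2}-\ref{asmp2:holder}; one enlarges it to a threshold $\gamma\in(r(\bfc)+\beta,\delta)$ as in Remark \ref{asmp2'} and the proof of Lemma \ref{lem:whyregularize}, which is harmless precisely because condition (1) ($\beta<\bar\beta$) or condition (2) together with the two $\bbN[\mfs]$-avoidance hypotheses guarantees that no element of $\bbN[\mfs]$ lies in the relevant interval, recalling that $\bbN[\mfs]$ is locally finite since $\mfs_i\ge1$. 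Finally, the local Lipschitz bounds follow from the same computations applied to the differences $\Pi^{(1)}-\Pi^{(2)}$, $\Gamma^{(1)}-\Gamma^{(2)}$, $f^{(1)}-f^{(2)}$, $\Delta_{x;h}^{\Gamma^{(1)}}f^{(1)}-\Delta_{x;h}^{\Gamma^{(2)}}f^{(2)}$ and $\Lambda^{(1)}-\Lambda^{(2)}$; the quadratic dependence on $R$ arises from differencing the product $\|\Pi\|_{\bfc,w}(1+\|\Gamma\|_{\bfc,w})$, which generates terms such as $\|\Pi^{(1)}\|_{\bfc,w}\,\|\Gamma^{(1)}-\Gamma^{(2)}\|_{\bfc,w}$.
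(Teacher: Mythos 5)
Your overall skeleton (the easy bound \eqref{MSbesov1}, the use of the compatibility identity \eqref{ex:compatiblemodel} to produce $\mcI\Delta^\Gamma_{x;h}f$ plus polynomial terms, the split of the $t$-integral at $t_0=\|h\|_\mfs^\ell\wedge1$, the kernel Taylor estimate with an enlarged threshold $\gamma$ when $\beta=\bar\beta$, and the differencing argument for the Lipschitz bounds) is the paper's, but your bookkeeping at the polynomial levels has a genuine gap, in two places. First, the claimed reduction is not an identity with a harmless remainder. Using compatibility one gets $\Delta_{x;h}^{\bar\Gamma}\mcK f=\mcI\Delta_{x;h}^\Gamma f+\mcJ(x-h)\Delta_{x;h}^\Gamma f+\big(\mcN(x-h;f,\Lambda)-\bar\Gamma_{(x-h)x}\mcN(x;f,\Lambda)\big)$, so that, writing $\Lambda_x^{\Pi,f}:=\Lambda-\Pi_xf(x)$, the coefficient of $X^{\bf k}/{\bf k}!$ is
\begin{align*}
\mcA^{\bf k}(x;h)=\sum_{\bfa\in\bfA,\,r(\bfa)>|{\bf k}|_\mfs-\beta}\partial^{\bf k}K\big(x-h,\Pi_{x-h}P_\bfa\Delta_{x;h}^\Gamma f\big)
+\Big\{\partial^{\bf k}K\big(x-h,\Lambda_{x-h}^{\Pi,f}\big)-\sum_{|{\bf l}|_\mfs<r(\bfc)+\beta-|{\bf k}|_\mfs}\tfrac{(-h)^{\bf l}}{{\bf l}!}\partial^{{\bf k}+{\bf l}}K\big(x,\Lambda_x^{\Pi,f}\big)\Big\}.
\end{align*}
This differs from your Taylor increment $c_{\bf k}(x-h)-\sum_{\bf l}\frac{(-h)^{\bf l}}{{\bf l}!}c_{{\bf k}+{\bf l}}(x)$ exactly by the $X^{\bf k}$-component of $\big(\bar\Gamma_{(x-h)x}\mcI-\mcI\Gamma_{(x-h)x}\big)f(x)$, which is what your ``absorb the $\mcI f$-contributions'' sweeps under the rug. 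That leftover is not of the form $\mcI\Delta_{x;h}^\Gamma f$ and cannot be ``handled as above'': its natural termwise bound via the model estimate on $\bar\Gamma$ (or, after compatibility, via Lemma \ref{lem:KregularizePi} applied to $\mcJ(x-h)\Gamma_{(x-h)x}f(x)-\bar\Gamma_{(x-h)x}\mcJ(x)f(x)$) only yields the exponent $r(\bfa)+\beta-|{\bf k}|_\mfs$, which is strictly smaller than the required $r(\bfc)+\beta-|{\bf k}|_\mfs$ whenever $r(\bfa)<r(\bfc)$.

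Second, even granting the reduction, the small-$t$ part of your plan fails: on $t\in(0,t_0)$ you bound each term of the $c$-based increment separately, but $c_{\bf k}(x)$ contains $\partial^{\bf k}K_t\big(x,\Pi_xP_{\ge{\bf k}}f(x)\big)$, and for an intermediate level $|{\bf k}|_\mfs-\beta<r(\bfa)<r(\bfc)$ the integrated bound from Lemma \ref{lem:KregularizePi} is only of order $\|h\|_\mfs^{r(\bfa)+\beta-|{\bf k}|_\mfs}$, again with too small an exponent for $\|h\|_\mfs\le1$. The cancellation needed at small $t$ is precisely the pairing of $f(x-h)$ with $\Gamma_{(x-h)x}f(x)$ at the common base point $x-h$, i.e.\ the formation of $\Delta_{x;h}^\Gamma f$, which your $c$-grouping destroys. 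Indeed your ``first point of care'' is inverted: the termwise bound of $\mcJ(x-h)\Delta_{x;h}^\Gamma f$ only produces the deficient power $\|h\|_\mfs^{r(\bfc)-r(\bfa)}$ if the $t$-integral runs over all of $(0,1)$; restricted to $(0,t_0)$ it gives $\|h\|_\mfs^{r(\bfcoa)}\,t_0^{(r(\bfa)+\beta-|{\bf k}|_\mfs)/\ell}=\|h\|_\mfs^{r(\bfc)+\beta-|{\bf k}|_\mfs}$, exactly as needed. The paper's proof therefore keeps this grouping ($\mcB_t^{{\bf k},1}+\mcB_t^{{\bf k},2}$) on $(0,t_0)$ and, only on $[t_0,1)$, regroups via $\Lambda_{x-h}^{\Pi,f}=\Lambda_x^{\Pi,f}-\Pi_{x-h}\Delta_{x;h}^\Gamma f$ into $\mcC_t^{{\bf k},1}$ (only the low levels $r(\bfa)\le|{\bf k}|_\mfs-\beta$ of $\Delta^\Gamma f$) plus the kernel Taylor remainder applied to the fixed germ $\Lambda_x^{\Pi,f}$. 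Unless you replace your single $c_{\bf k}$-based bookkeeping by two such regroupings adapted to the two $t$-regimes, the key estimate \eqref{MSbesov2} does not follow from your argument.
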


\begin{proof}
The proof of the local Lipschitz estimate is a simple modification of those of \eqref{MSbesov1} and \eqref{MSbesov2}. The bound \eqref{MSbesov1} immediately follows from the continuity of $\mcI$ and Lemma \ref{lem:KregularizePi}. In the following, we focus on the proof of \eqref{MSbesov2}.
By using the property \eqref{ex:compatiblemodel} of compatible models, we decompose
\begin{align*}
\Delta_{x;h}^{\bar\Gamma}\mcK f&=\mcK f(x-h)-\bar\Gamma_{(x-h)x}\mcK f(x)\\
&=\mcK f(x-h)-\bar\Gamma_{(x-h)x}\big(\mcI+\mcJ(x)\big)f(x)-\bar\Gamma_{(x-h)x}\,\mcN(x;f,\Lambda)\\
&=\mcK f(x-h)-\big(\mcI+\mcJ(x-h)\big)\Gamma_{(x-h)x}f(x)-\bar\Gamma_{(x-h)x}\,\mcN(x;f,\Lambda)\\
&=\big(\mcI+\mcJ(x-h)\big)(\Delta_{x;h}^\Gamma f)
+\big(\mcN(x-h;f,\Lambda)-\bar\Gamma_{(x-h)x}\,\mcN(x;f,\Lambda)\big)\\
&=:\mcI(\Delta_{x;h}^\Gamma f)+\sum_{|{\bf k}|_{\mfs}<r(\bfc)+\beta}\frac{X^{\bf k}}{{\bf k}!}\mcA^{\bf k}(x;h).
\end{align*}
For the $\mcI$ term, noting that $(\bfcpb)\ominus\bfa=\bfc\ominus(\bfa\ominus\beta)$ we easily obtain
\begin{align*}
\|\mcI(\Delta_{x;h}^\Gamma f)\|_{L_x^{i((\bfcpb)\ominus\bfa)}(v;\bfT_\bfa)}
&\le\|\mcI\|\|\Delta_{x;h}^\Gamma f\|_{L_x^{i(\bfc\ominus(\bfa\ominus\beta))}(v;\bfT_{\bfa\ominus\beta})}\\
&\le\|\mcI\|\|f\|_{\bfc,v}^\Gamma\, v^*(h)\|h\|_\mfs^{r((\bfcpb)\ominus\bfa)}.
\end{align*}
When $r(\bfc)+\beta\le0$ the proof is completed.
In the rest of the proof, we assume $r(\bfc)+\beta>0$ and focus on the polynomial part.
Since $X^{{\bf k}}$ belongs to the space $\bar{\bf T}_{(|{\bf k}|_{\mfs},\infty)}$ and $i((\bfc\oplus\beta)\ominus(|{\bf k}|_\mfs,\infty))=i(\bfc)$, we check the $L_x^{i(\bfc)}$ norm of $\mcA^{\bf k}(x;h)$.
Note that the coefficient $\mcA^{\bf k}$ is given by
\begin{align*}
\mcA^{\bf k}(x;h)&=\sum_{\bfa\in \bfA,\,r(\bfa)>|{\bf k}|_{\mfs}-\beta}\partial^{\bf k}K\big(x-h,\Pi_{x-h} P_\bfa\Delta_{x;h}^\Gamma f\big)\\
&\quad+\bigg\{\partial^{\bf k}K\big(x-h, \Lambda_{x-h}^{\Pi,f}\big)
-\sum_{|{\bf l}|_\mfs<r(\bfc)+\beta-|{\bf k}|_\mfs}\frac{(-h)^{\bf l}}{{\bf l}!}\partial^{{\bf k}+{\bf l}}K\big(x,\Lambda_x^{\Pi,f}\big)\bigg\},
\end{align*}
where $\Lambda_x^{\Pi.f}:=\Lambda-\Pi_xf(x)$.
According to the integral form $K=\int_0^1K_tdt$, we decompose $\mcA^{\bf k}=\int_0^1\mcA_t^{\bf k}dt=\int_0^1(\mcB_t^{{\bf k},1}+\mcB_t^{{\bf k},2})dt$, where
\begin{align*}
\mcB_t^{{\bf k},1}(x;h)&:=\sum_{\bfa\in \bfA,\,r(\bfa)>|{\bf k}|_{\mfs}-\beta}\partial^{\bf k}K_t\big(x-h,\Pi_{x-h} P_\bfa\Delta_{x;h}^\Gamma f\big),\\
\mcB_t^{{\bf k},2}(x;h)&:=\partial^{\bf k}K_t\big(x-h, \Lambda_{x-h}^{\Pi,f}\big)
-\sum_{|{\bf l}|_\mfs<r(\bfc)+\beta-|{\bf k}|_\mfs}\frac{(-h)^{\bf l}}{{\bf l}!}\partial^{{\bf k}+{\bf l}}K_t\big(x,\Lambda_x^{\Pi,f}\big).
\end{align*}
We use this decomposition for the integral over $0\le t\le t_0:=\|h\|_\mfs^\ell\wedge1$.
For the $\mcB_t^{{\bf k},1}$ part, by Lemmas \ref{lem:weightedtranslation} and \ref{lem:KregularizePi},
\begin{align*}
\|\mcB_t^{{\bf k},1}(x;h)\|_{L_x^{i(\bfc)}(w^2v)}
&\le\sum_{r(\bfa)>|{\bf k}|_{\mfs}-\beta}\|\Delta_{x;h}^\Gamma f\|_{L_x^{i(\bfcoa)}(v;\bfT_\bfa)}
\big\|\partial^{\bf k}K_t\big(x-h,\Pi_{x-h}(\cdot)\big)\big\|_{L_x^{i(\bfa)}(w^2;\bfT_\bfa^*)}\\
&\lesssim L_1\big((\wco)^2\vco\big)(h)
\sum_{r(\bfa)>|{\bf k}|_{\mfs}-\beta}\|h\|_{\mfs}^{r(\bfcoa)}t^{(r(\bfa)+\bar\beta-|{\bf k}|_{\mfs})/\ell-1}\\
&\le L_1\big((\wco)^2\vco\big)(h)
\sum_{r(\bfa)>|{\bf k}|_{\mfs}-\beta}\|h\|_{\mfs}^{r(\bfcoa)}t^{(r(\bfa)+\beta-|{\bf k}|_{\mfs})/\ell-1},
\end{align*}
where $L_1=C_K\|\Pi\|_{\bfc,w}(1+\|\Gamma\|_{\bfc,w})\|f\|_{\bfc,v}^\Gamma$.
In the last inequality, we used $t\le1$ and $\beta\le\bar\beta$.
For the $\mcB_t^{{\bf k},2}$ part, by Lemmas \ref{lem:weightedtranslation} and \ref{lem:KregularizePi},
\begin{align*}
\|\mcB_t^{{\bf k},2}(x;h)\|_{L_x^{i(\bfc)}(wv)}
&\lesssim L_2\,t^{(r(\bfc)+\bar\beta-|{\bf k}|_\mfs)/\ell-1}
\bigg((\wco\vco)(h)+\sum_{|{\bf l}|_\mfs<r(\bfc)+\beta-|{\bf k}|_\mfs}\|h\|_\mfs^{|{\bf l}|_\mfs}t^{-|{\bf l}|_\mfs/\ell}\bigg)\\
&\le L_2\,t^{(r(\bfc)+\beta-|{\bf k}|_\mfs)/\ell-1}
\bigg((\wco\vco)(h)+\sum_{|{\bf l}|_\mfs<r(\bfc)+\beta-|{\bf k}|_\mfs}\|h\|_\mfs^{|{\bf l}|_\mfs}t^{-|{\bf l}|_\mfs/\ell}\bigg),
\end{align*}
where $L_2=C_K(\lb\Lambda\rb_{\bfc,wv}^{\Pi,f}+\|\Pi\|_{\bfc,w}\|f\|_{\bfc,v})$.
Since all powers of $t$ above are greater than $-1$, we have the bound
\begin{align*}
\int_0^{t_0}\|\mcA_t^{\bf k}(x;h)\|_{L_x^{i(\bfc)}(w^2v)}dt
&\lesssim(L_1+L_2)\big((\wco)^2\vco\big)(h)
\sum_{\alpha_1+\alpha_2=r(\bfc)+\beta-|{\bf k}|_{\mfs}}\|h\|_{\mfs}^{\alpha_1}\, t_0^{\alpha_2/\ell}\\
&\lesssim(L_1+L_2)\big((\wco)^2\vco\big)(h)
\|h\|_{\mfs}^{r(\bfc)+\beta-|{\bf k}|_{\mfs}}.
\end{align*}
Finally, we assume that $\|h\|_\mfs\le1$ and consider the integral over $t_0=\|h\|_\mfs^\ell\le t\le1$.
For this case, we use another decomposition $\mcA_t^{\bf k}=\mcC_t^{{\bf k},1}+\mcC_t^{{\bf k},2}$ given by
\begin{align*}
\mcC_t^{{\bf k},1}(x;h)&:=-\sum_{\bfa\in \bfA,\,r(\bfa)\le|{\bf k}|_{\mfs}-\beta}\partial^{\bf k}K_t\big(x-h,\Pi_{x-h} P_\bfa\Delta_{x;h}^\Gamma f\big),\\
\mcC_t^{{\bf k},2}(x;h)&:=\partial^{\bf k}K_t\big(x-h, \Lambda_x^{\Pi,f}\big)
-\sum_{|{\bf l}|_\mfs<r(\bfc)+\beta-|{\bf k}|_\mfs}\frac{(-h)^{\bf l}}{{\bf l}!}\partial^{{\bf k}+{\bf l}}K_t\big(x,\Lambda_x^{\Pi,f}\big),
\end{align*}
where we used the identity $\Lambda_{x-h}^{\Pi,f}=\Lambda_x^{\Pi,f}-\Pi_{x-h}\Delta_{x;h}^\Gamma f$ to show $\mcB_t^{{\bf k},1}+\mcB_t^{{\bf k},2}=\mcC_t^{{\bf k},1}+\mcC_t^{{\bf k},2}$.
The bound of $\mcC_t^{{\bf k},1}$ is obtained similarly to $\mcB_t^{{\bf k},1}$ as follows.
\begin{align*}
&\|\mcC_t^{{\bf k},1}(x;h)\|_{L_x^{i(\bfc)}(w^2v)}
\lesssim L_1\big((\wco)^2\vco\big)(h)
\sum_{r(\bfa)\le |{\bf k}|_{\mfs}-\beta}\|h\|_{\mfs}^{r(\bfcoa)}t^{(r(\bfa)+\bar\beta-|{\bf k}|_{\mfs})/\ell-1}.
\end{align*}
However, for the integral $\int_{t_0}^1\mcC_t^{{\bf k},1}dt$, we have to pay more attention to the powers of $t$. For $\bfa$ such that $r(\bfa)<|{\bf k}|_\mfs-\beta$, we easily have
\begin{align*}
\int_{t_0}^1\|h\|_{\mfs}^{r(\bfcoa)}t^{(r(\bfa)+\bar\beta-|{\bf k}|_{\mfs})/\ell-1}dt
&\le\int_{t_0}^1\|h\|_{\mfs}^{r(\bfcoa)}t^{(r(\bfa)+\beta-|{\bf k}|_{\mfs})/\ell-1}dt\\
&\lesssim\|h\|_{\mfs}^{r(\bfcoa)}t_0^{(r(\bfa)+\beta-|{\bf k}|_\mfs)/\ell}
=\|h\|_\mfs^{r(\bfc)+\beta-|{\bf k}|_\mfs}.
\end{align*}
If there is $\bfa$ such that $r(\bfa)=|{\bf k}|_\mfs-\beta$, then since $\bar\beta>\beta$ by assumption, we have
\begin{align*}
\int_{t_0}^1\|h\|_{\mfs}^{r(\bfcoa)}t^{(r(\bfa)+\bar\beta-|{\bf k}|_{\mfs})/\ell-1}dt
=\int_{t_0}^1\|h\|_{\mfs}^{r(\bfc)+\beta-|{\bf k}|_\mfs}t^{(\bar\beta-\beta)/\ell-1}dt
\lesssim\|h\|_\mfs^{r(\bfc)+\beta-|{\bf k}|_\mfs}.
\end{align*}
For the $\mcC_t^{{\bf k},2}$ part, we employ the inequality obtained in Remark \ref{asmp2'} with $\varepsilon=r(\bfc)+\beta$ (recall that we consider the case $r(\bfc)+\beta>0$) and have
\begin{align}\label{ex:Ck2}
\begin{aligned}
&|\mcC_t^{{\bf k},2}(x;h)|\\
&=\bigg|\int_{\bbR^d}\bigg(\partial_y^{\bf k}K_{t/2}(x-h,y)-\sum_{|{\bf l}|_\mfs<r(\bfc)+\beta-|{\bf k}|_\mfs}\frac{(-h)^{\bf l}}{{\bf l}!}\partial_x^{{\bf k}+{\bf l}}K_{t/2}(x,y)\bigg)
Q_{t/2}\big(y,\Lambda_x^{\Pi,f}\big)dy\bigg|\\
&\lesssim C_K\|h\|_\mfs^{r(\bfc)+\beta-|{\bf k}|_\mfs}t^{(\bar\beta-\beta-r(\bfc))/\ell-1}\int_{\bbR^d}G_{t/2}(x-y)\big|Q_{t/2}(y,\Lambda_x^{\Pi,f})\big|dy.
\end{aligned}
\end{align}
By taking $L_x^{i(\bfc)}(wv)$ norm, we have
\begin{align*}
&\|\mcC_t^{{\bf k},2}(x;h)\|_{L_x^{i(\bfc)}(wv)}\\
&\lesssim C_K\|h\|_\mfs^{r(\bfc)+\beta-|{\bf k}|_\mfs}t^{(\bar\beta-\beta-r(\bfc))/\ell-1}
\int_{\bbR^d}G_{t/2}(z)\big\|Q_{t/2}(x-z,\Lambda_x^{\Pi,f})\big\|_{L_x^{i(\bfc)}(wv)}dz\\
&\lesssim L_2\|h\|_\mfs^{r(\bfc)+\beta-|{\bf k}|_\mfs}t^{(\bar\beta-\beta-r(\bfc))/\ell-1}\int_{\bbR^d}G_{t/2}(z)(\wco\vco)(z)\bigg(t^{r(\bfc)/\ell}+\sum_{\bfa\prec \bfc}t^{r(\bfa)/\ell}\|z\|_{\mfs}^{r(\bfcoa)}\bigg)dz\\
&\lesssim L_2\|h\|_\mfs^{r(\bfc)+\beta-|{\bf k}|_\mfs}t^{(\bar\beta-\beta)/\ell-1},
\end{align*}
where the second inequality follows from a similar argument to \eqref{eq:Q(x-h,Pix)} in Remark \ref{rem:whatisknownforZ}.
For the case $\beta<\bar\beta$, we have the result by
\begin{align*}
\int_{t_0}^1\|h\|_{\mfs}^{r(\bfc)+\beta-|{\bf k}|_\mfs}t^{(\bar\beta-\beta)/\ell-1}dt
&\lesssim\|h\|_{\mfs}^{r(\bfc)+\beta-|{\bf k}|_\mfs}.
\end{align*}
If $\beta=\bar\beta$, we return to \eqref{ex:Ck2} and replace the region ``$|{\bf l}|_\mfs<r(\bfc)+\bar\beta-|{\bf k}|_\mfs$" with ``$|{\bf l}|_\mfs<r(\bfc)+\gamma-|{\bf k}|_\mfs$" for some $\gamma>\bar\beta$. This is possible because $r(\bfc)+\bar\beta\notin\bbN[\mfs]$ and $r(\bfc)+\bar\beta<\delta$ by assumption. Then by repeating the same argument as above, we have
\begin{align*}
\int_{t_0}^1\|\mcC_t^{{\bf k},2}(x;h)\|_{L_x^{i(\bfc)}(wv)}dt
&\lesssim L_2\|h\|_{\mfs}^{r(\bfc)+\gamma-|k|_{\mfs}}\int_{t_0}^1t^{(\bar\beta-\gamma)/\ell-1}dt\\
&\lesssim L_2\|h\|_{\mfs}^{r(\bfc)+\gamma-|{\bf k}|_\mfs}t_0^{(\bar\beta-\gamma)/\ell}
=L_2\|h\|_\mfs^{r(\bfc)+\bar\beta-|{\bf k}|_\mfs}.
\end{align*}
\end{proof}

We close this section with the important commutation result.

\begin{thm}\label{thm:KR=RK}
In addition to the setting of Theorem \ref{thm:besovschauder}, we assume that $\alpha_0+\bar\beta\in(0,\delta)\setminus\bbN[\mfs]$ 
for the regularity $\alpha_0$ of $\scT$ and that $(M,\bar{M})$ is $K$-admissible for $\mcI$. Then $K\Lambda\in B_{i(\bfc)}^{\alpha_0+\bar\beta}(w)$ is the unique reconstruction of $\mcK f\in\mcD_{w^2v}^{\bfcpb}(\bar\Gamma)$ and
$$
\lb K\Lambda\rb_{\bfcpb,w^2v}^{\bar{\Pi},\mcK f}\lesssim C_K\big(\lb\Lambda\rb_{\bfc,wv}^{\Pi,f}+\|\Pi\|_{\bfc,w}\|f\|_{\bfc,v}^\Gamma\big).
$$
Moreover, a similar local Lipschitz estimate to the latter part of Theorem \ref{thm:besovschauder} holds.
\end{thm}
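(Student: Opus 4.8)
The plan is to prove that the concrete function $K\Lambda$ meets the defining property of a reconstruction of $\mcK f\in\mcD_{w^2v}^{\bfcpb}(\bar\Gamma)$ for $\bar M$, with the stated quantitative bound; since Theorem~\ref{thm:besovschauder} already places $\mcK f$ in $\mcD_{w^2v}^{\bfcpb}(\bar\Gamma)$, and $r(\bfcpb)=r(\bfc)+\beta>0$ here, uniqueness of the reconstruction is automatic from the uniqueness part of Theorem~\ref{thm:besovreconstruction}, so the whole content is to identify that reconstruction with $K\Lambda$. First I would check that $K\Lambda$ is a legitimate candidate: since $\Lambda\in B_{i(\bfc)}^{\alpha_0,Q}(wv)$ and $\alpha_0+\bar\beta>0$, writing $K_s=K_{s/2}\circ Q_{s/2}$ and using Definition~\ref{asmp2}-\ref{asmp2:gauss} together with Lemma~\ref{lem:weightedyoung} gives $\|K_s\Lambda\|_{L^{i(\bfc)}(wv)}\lesssim s^{(\alpha_0+\bar\beta)/\ell-1}\|\Lambda\|_{B_{i(\bfc)}^{\alpha_0,Q}(wv)}$, hence $K\Lambda=\int_0^1K_s\Lambda\,ds\in L_c^{i(\bfc)}(wv)$; Lemma~\ref{lem:whyregularize} (applicable as $\alpha_0\in(-\bar\beta,0]$, $\alpha_0+\bar\beta<\delta$, $\alpha_0+\bar\beta\notin\bbN[\mfs]$) upgrades this to $K\Lambda\in B_{i(\bfc)}^{\alpha_0+\bar\beta}$, which, since $w^2v\le wv$ pointwise, embeds into the space $B_{i(\bfc)}^{\bar\alpha_0,Q}(w^2v)$ of candidate reconstructions.

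Next I would compute $\bar\Pi_x\mcK f(x)$ via $K$-admissibility. Applying \eqref{ex:admissiblemodel} to the $\mcI$-part, $\bar\Pi_xX^{\bf k}=(\cdot-x)^{\bf k}$ and \eqref{eq:Jx} to the $\mcJ(x)$-part, and the definition of $\mcN$, the polynomial corrections of $\mcI$ and $\mcJ(x)$ cancel componentwise, leaving
\[
\bar\Pi_x\mcK f(x)=K\big(\cdot,\Pi_xf(x)\big)+\sum_{|{\bf k}|_\mfs<r(\bfc)+\beta}\frac{(\cdot-x)^{\bf k}}{{\bf k}!}\,\partial^{\bf k}K\big(x,\Lambda-\Pi_xf(x)\big).
\]
Writing $g_x:=\Lambda-\Pi_xf(x)$ (the abuse of notation of Remark~\ref{rem:L-Pixfx}, harmless since $g_x$ enters only through $K$ and $Q$) and using linearity of $K$, the object to estimate becomes
\[
K\Lambda-\bar\Pi_x\mcK f(x)=K(\cdot,g_x)-\sum_{|{\bf k}|_\mfs<r(\bfc)+\beta}\frac{(\cdot-x)^{\bf k}}{{\bf k}!}\,\partial^{\bf k}K(x,g_x),
\]
i.e. the Taylor remainder at $x$, of degree $<r(\bfc)+\beta$, of the function $K(\cdot,g_x)$.

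The core estimate is $\|Q_t(x,K\Lambda-\bar\Pi_x\mcK f(x))\|_{L_x^{i(\bfc)}(w^2v)}\lesssim C_K(\lb\Lambda\rb_{\bfc,wv}^{\Pi,f}+\|\Pi\|_{\bfc,w}\|f\|_{\bfc,v}^\Gamma)\,t^{(r(\bfc)+\beta)/\ell}$ for $t\in(0,1]$, which simultaneously yields $\lb K\Lambda\rb_{\bfcpb,w^2v}^{\bar\Pi,\mcK f}<\infty$ and the displayed bound. To prove it I would insert $K=\int_0^1K_s\,ds$ and $\partial^{\bf k}K(x,g_x)=\int_0^1\partial^{\bf k}K_s(x,g_x)\,ds$, apply $Q_t(x,\cdot)$, substitute $y=x-h$, and for each fixed $s$ split the $h$-integral at $\|h\|_\mfs\sim s^{1/\ell}$. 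On $\|h\|_\mfs\lesssim s^{1/\ell}$ the integrand $K_s(x-h,g_x)-\sum_{|{\bf k}|_\mfs<r(\bfc)+\beta}\frac{(-h)^{\bf k}}{{\bf k}!}\partial^{\bf k}K_s(x,g_x)$ is bounded by Definition~\ref{asmp2}-\ref{asmp2:holder} (through Remark~\ref{asmp2'} with an exponent $\varepsilon$ chosen so that $\{|{\bf k}|_\mfs<\varepsilon\}=\{|{\bf k}|_\mfs<r(\bfc)+\beta\}$, possible because $\bbN[\mfs]$ is discrete), after which $K_s=K_{s/2}\circ Q_{s/2}$ and the $L_x^{i(\bfc)}$-control of $Q_{s/2}(\cdot,g_x)$ furnished by Lemma~\ref{lem:KregularizePi} (re-run with $g_x=(\Lambda-\Pi_{x-h}f(x-h))+\Pi_{x-h}(f(x-h)-\Gamma_{(x-h)x}f(x))$, as in Remark~\ref{rem:whatisknownforZ}) finish the bound. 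On $\|h\|_\mfs\gtrsim s^{1/\ell}$ one estimates $K_s(x-h,g_x)$ and each $\partial^{\bf k}K_s(x,g_x)$ separately by Definition~\ref{asmp2}-\ref{asmp2:gauss} and Lemma~\ref{lem:KregularizePi}, the factor $|Q_t(x,x-h)|\lesssim G_t(h)$ absorbing the polynomial weights $\|h\|_\mfs^{|{\bf k}|_\mfs}$. Integrating in $s$ over $(0,1)$ and then against $G_t(h)$, every term collapses to $C_K(\lb\Lambda\rb_{\bfc,wv}^{\Pi,f}+\|\Pi\|_{\bfc,w}\|f\|_{\bfc,v}^\Gamma)\int_{\bbR^d}G_t(h)\big((\wco)^2\vco\big)(h)\|h\|_\mfs^{r(\bfc)+\beta}\,dh\lesssim C_K(\cdots)\,t^{(r(\bfc)+\beta)/\ell}$ by \eqref{weightintegrable}; the computation runs parallel to that of $\mcB_t^{{\bf k},2}$ and $\mcC_t^{{\bf k},2}$ in the proof of Theorem~\ref{thm:besovschauder}, now with an extra outer $Q_t$.

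The main obstacle is the borderline case $\beta=\bar\beta$ (assumption~(2) of Theorem~\ref{thm:besovschauder}): the $s$-integral coming from $\|h\|_\mfs^\ell\le s\le1$ is then of type $\int_{\|h\|_\mfs^\ell}^1 s^{-1}\,ds$, only logarithmically convergent, and fails to produce the factor $\|h\|_\mfs^{r(\bfc)+\beta}$. The remedy mirrors the handling of $\mcC_t^{{\bf k},2}$ in Theorem~\ref{thm:besovschauder}: since $r(\bfc)+\bar\beta\notin\bbN[\mfs]$ and $r(\bfc)+\bar\beta<\delta$, pick $\gamma\in(\bar\beta,\delta-r(\bfc))$ small enough that $\bbN[\mfs]$ meets no point of $[r(\bfc)+\bar\beta,r(\bfc)+\gamma)$; then the subtracted polynomial is unchanged when its index set is rewritten as $|{\bf k}|_\mfs<r(\bfc)+\gamma$, and Remark~\ref{asmp2'} with $\varepsilon=r(\bfc)+\gamma$ improves the Hölder bound to carry $s^{(\bar\beta-\gamma)/\ell-1}$, so that $\int_{\|h\|_\mfs^\ell}^1 s^{(\bar\beta-\gamma)/\ell-1}\,ds\lesssim\|h\|_\mfs^{\bar\beta-\gamma}$ and the needed power is restored. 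Finally, uniqueness follows verbatim from the uniqueness argument in the proof of Theorem~\ref{thm:besovreconstruction} (the difference $g$ of two reconstructions satisfies $\|Q_tg\|_{L^{i(\bfc)}(w^2v)}\lesssim t^{(r(\bfc)+\beta)/\ell}$, whence $g=0$ by Lemma~\ref{lem:timecontinuity} and Proposition~\ref{prop:abinjection}), and the local Lipschitz estimate is obtained by the same decomposition applied to differences, using the bilinear structure together with the Lipschitz bounds already provided by Theorems~\ref{thm:besovreconstruction} and~\ref{thm:besovschauder}.
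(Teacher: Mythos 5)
Your proposal is correct and follows essentially the same route as the paper: using $K$-admissibility and the cancellation between the $\mcI$, $\mcJ(x)$, and $\mcN$ parts, you identify $K\Lambda-\bar\Pi_x\mcK f(x)$ with the Taylor remainder $\int_0^1\mcC_s^{{\bf 0},2}(x;x-\cdot)\,ds$ from the proof of Theorem \ref{thm:besovschauder}, bound $Q_t$ applied to it by the same kernel estimates (with the same $\gamma$-enlargement when $\beta=\bar\beta$, allowed since $r(\bfc)+\bar\beta\notin\bbN[\mfs]$), and settle uniqueness and the Lipschitz bound exactly as the paper does. The only difference is cosmetic: you re-derive the key bound with an explicit split of the $h$-integral at $\|h\|_\mfs\sim s^{1/\ell}$, thereby respecting the range restriction in Definition \ref{asmp2}-\ref{asmp2:holder}, whereas the paper quotes the $\mcC_s^{{\bf k},2}$ estimate wholesale; the computation is the same.
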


\begin{rem}
The condition on $\alpha_0$ is only to ensure the existence of $K\Lambda$ as an element of Besov space. 
If $K\Lambda$ is well-defined even though $\alpha_0+\bar\beta\le0$ (cf. Remark \ref{rem:KQ=QK}), the same result as above holds under the weaker condition that $r(\bfc)+\beta>0$ which ensures the uniqueness of the reconstruction of $\mcK f$.
\end{rem}

\begin{proof}
By definition, we can write
\begin{align*}
\bar{\Pi}_x\mcK f(x)&=\bar{\Pi}_x\big(\mcI+\mcJ(x)\big)f(x)+\bar{\Pi}_x\,\mcN(x;f,\Lambda)\\
&=K\big(\cdot,\Pi_xf(x)\big)+\sum_{|{\bf k}|_\mfs<r(\bfc)+\beta}\frac{(\cdot-x)^{\bf k}}{{\bf k}!}\partial^{\bf k}K\big(x,\Lambda_x^{\Pi,f}\big)
\end{align*}
and thus
\begin{align*}
(K\Lambda)_x^{\bar{\Pi},\mcK f}=K\big(\cdot,\Lambda_x^{\Pi,f}\big)-\sum_{|{\bf k}|_\mfs<r(\bfc)+\beta}\frac{(\cdot-x)^{\bf k}}{{\bf k}!}\partial^{\bf k}K\big(x,\Lambda_x^{\Pi,f}\big)
=\int_0^1\mcC_s^{{\bf 0},2}(x;x-\cdot)ds
\end{align*}
by using the notation introduced in the proof of Theorem \ref{thm:besovschauder}.
By the bound of $\mcC^{{\bf k},2}$ obtained there, when $\beta<\bar\beta$ we have
\begin{align*}
\big\|Q_t\big(x,(K\Lambda)_x^{\bar{\Pi},\mcK f}\big)\big\|_{L_x^{i(\bfc)}(w^2v)}
&\le\bigg\|\int_0^1ds\int_{\bbR^d}Q_t(x,x-h)\mcC_s^{{\bf 0},2}(x;h)dh\bigg\|_{L_x^{i(\bfc)}(wv)}\\
&\lesssim\int_0^1ds\int_{\bbR^d}G_t(h)\|\mcC_s^{{\bf 0},2}(x;h)\|_{L_x^{i(\bfc)}(wv)}dh\\
&\lesssim L_2
\int_0^1ds\int_{\bbR^d}G_t(h)\|h\|_\mfs^{r(\bfc)+\beta}s^{(\bar\beta-\beta)/\ell-1}ds\\
&\lesssim L_2\, t^{(r(\bfc)+\beta)/\ell}.
\end{align*}
The proof for the case $\beta=\bar\beta$ is similar.
\end{proof}

\vspace{7mm}
\noindent
{\bf Acknowledgements.}
The author is supported by JSPS KAKENHI Grant Number 23K12987.
He also thank anonymous referees for their helpful comments that improved the
quality of the paper.

\end{document}